\setlist{nolistsep} 	
\newcommand\blfootnote[1]{%
  \begingroup
  \renewcommand\thefootnote{}\footnote{#1}%
  \addtocounter{footnote}{-1}%
  \endgroup
}
\newtheoremstyle{plain}{3mm}{3mm}{\slshape}{}{\bfseries}{.}{.5em}{}
\newtheoremstyle{claim}{3mm}{3mm}{}{}{\itshape}{.}{.5em}{}
\newtheoremstyle{definition}{2mm}{2mm}{}{}{\bfseries}{.}{.5em}{}
\theoremstyle{plain}
\newtheorem{Theorem}{Theorem}[section]
\newtheorem{theorem}[Theorem]{Theorem}
\newtheorem{lemma}[Theorem]{Lemma}
\newtheorem{Proposition}[Theorem]{Proposition}
\newtheorem{proposition}[Theorem]{Proposition}
\newtheorem{Conjecture}[Theorem]{Conjecture}
\newtheorem{question}[Theorem]{Question}
\theoremstyle{claim}
\theoremstyle{definition}
\newtheorem{remark}[Theorem]{Remark}
\theoremstyle{plain} 
\newcounter{MainTheoremCounter}
\newtheorem{Maintheorem}[MainTheoremCounter]{Theorem}
\theoremstyle{plain}
\newtheorem*{namedthm}{\namedthmname}
\newcounter{namedthm}
\newenvironment{named}[2]
{\def\namedthmname{#1}
\refstepcounter{namedthm}
\namedthm[#2]\def\@currentlabel{#1}}
{\endnamedthm}
\newcommand{\Szemeredi}{Szemer\'{e}di}
\newcommand{\N}{\mathbb{N}}
\newcommand{\Z}{\mathbb{Z}}
\newcommand{\R}{\mathbb{R}}
\newcommand{\C}{\mathbb{C}}
\newcommand{\T}{\mathbb{T}}
\newcommand{\ip}{\mathsf{IP}}
\newcommand{\Pol}{\operatorname{Pol}}
\renewcommand{\epsilon}{\varepsilon}
\renewcommand{\leq}{\leqslant}
\renewcommand{\geq}{\geqslant}
\renewcommand{\setminus}{\backslash}
\newcommand{\E}{\mathbb{E}}
\newcommand{\eps}{\epsilon}
\newcommand{\orb}{\overline{o}}
\newcommand{\defeq}{\coloneqq}
\newcommand{\family}{\mathscr{F}}
\title{A structure theorem for polynomial return-time sets in minimal systems}
\author[1]{Daniel Glasscock}
\author[2]{Andreas Koutsogiannis}
\author[3]{Anh N.\ Le}
\author[4]{Joel Moreira}
\author[5]{Florian K.\ Richter}
\author[6]{Donald Robertson}
\affil[1]{\small Dept.\ of Mathematics and Statistics, U. of Massachusetts Lowell, Lowell, MA, USA}
\affil[2]{\small Dept.\ of Mathematics, Aristotle University of Thessaloniki, Thessaloniki, Greece}
\affil[3]{\small Dept.\ of Mathematics,
	U. of Denver, Denver, CO, USA}
\affil[4]{\small Warwick Mathematics Institute, Coventry, UK}
\affil[5]{\small \'{E}cole Polytechnique F\'{e}d\'{e}rale de Lausanne, Lausanne, Switzerland}
\affil[6]{\small Dept.\ of Mathematics, University of Manchester, Manchester, UK}
\date{}
\begin{document}

\maketitle

\begin{abstract}
We investigate the structure of return-time sets determined by orbits along polynomial tuples in minimal topological dynamical systems.
Building on the topological characteristic factor theory of Glasner, Huang, Shao, Weiss, and Ye, we prove a structure theorem showing that, in a minimal system, return-time sets coincide -- up to a non-piecewise syndetic set -- with those in its maximal infinite-step pronilfactor.
As applications, we establish three new multiple recurrence theorems concerning linear recurrence along dynamically defined syndetic sets and polynomial recurrence along arithmetic progressions in minimal and totally minimal systems.
We also show how our main theorem can be used to prove that two previously separate conjectures -- one due to Glasner, Huang, Shao, Weiss, and Ye and the other due to Leibman -- are equivalent.
\end{abstract}

\blfootnote{2020 {\it Mathematics Subject Classification.}  Primary: 37B20. Secondary: 37B05.}

\blfootnote{{\it Key words and phrases. }
Minimal topological dynamical systems,
sets of recurrence,
polynomial multiple recurrence,
return-time sets,
nilsystems,
infinite-step pronilfactor.}

\tableofcontents

\section{Introduction}

Let $T: X \to X$ be a homeomorphism of a compact metric space $X$.
Given nonempty, open sets $U_1,\ldots,U_d \subseteq X$ and a polynomial tuple $p = (p_1, \ldots, p_d) \in \Z[x]^d$, the \emph{set of return-times of $U_1,\ldots,U_d$ along $p$} is defined as 
\begin{align}
\label{eqn_main_rp_notation}
    R_p(U_1,\dots,U_d):=\big\{n\in\Z: \ T^{-p_1(n)} U_1\cap\cdots\cap T^{-p_d(n)}U_d\neq\emptyset \big\}.
\end{align}
These sets naturally capture the polynomial recurrence behavior of the system $(X,T)$ by recording the times $n$ for which there exists a point $x\in X$ with $T^{p_i(n)}x \in U_i$ for every $i=1,\ldots,d$.
Return-time sets have been important objects of study since the pioneering work of Furstenberg and Weiss~\cite{Furstenberg_Weiss78}, who introduced topological dynamics as a powerful tool in additive combinatorics and Ramsey theory, linking dynamical recurrence to combinatorial and arithmetic structures in the integers.

In this paper, we expound on the principle that the sets $R_p(U_1,\ldots,U_d)$ are, to a large extent, determined by distinguished factors of $(X,T)$ with simpler and more rigid dynamics.
Factors that determine properties of return-time sets are known as \emph{characteristic} factors.
A major breakthrough -- due to Glasner, Huang, Shao, Weiss, and Ye \cite{glasner_huang_shao_weiss_ye_2020} in the linear case and extended by Qiu \cite{Qiu23} and Huang, Shao, and Ye \cite{huang_shao_ye_2023} in the polynomial case -- identifies the maximal $\infty$-step pro-nilfactor $(X_\infty,T)$ (see \cref{sec_prelims_nil}) as a characteristic factor.
A later refinement by Ye and Yu~\cite[Theorem~A]{yeyu} shows that the maximal $k$-step pronilfator is characteristic, where $k$ depends only on the polynomials $p_1,\ldots,p_d$.
These results provide long-awaited topological analogues of the existing ergodic-theoretic structure theory of measure-preserving systems established by Host, Kra \cite{Host_Kra05} and Ziegler \cite{Ziegler07} in the linear case and Host, Kra \cite{Host_Kra05b} and Leibman \cite{Leibman05} in the polynomial case.

\subsection{The main result}

Our main theorem -- \cref{mainthm_ps_structure_theory_pol} below -- is a refinement of the aforementioned topological characteristic factor results. It is best framed by formulating a statement equivalent to Qiu's theorem \cite[Theorem B]{Qiu23}; the equivalence between \cref{thm_qiu_eqiuvalent_statement} and Qiu's theorem is shown in \cref{thm_qiu_equivalent}.  A tuple of polynomials $(p_1, \ldots, p_d) \in \Z[x]^d$ is called {\it essentially distinct} if $p_i - p_j$ is non-constant whenever $i \neq j$. We denote by $U^\circ$ the interior of a subset $U$ of a topological space.

\begin{theorem}[{cf. \cite[Theorem B]{Qiu23} and \cref{thm_qiu_equivalent}}]
\label{thm_qiu_eqiuvalent_statement}
Let $(X,T)$ be a minimal and invertible topological dynamical system. Denote by $(X_\infty,T)$ its maximal $\infty$-step pro-nilfactor, and let $\pi\colon X\to X_\infty$ be the associated factor map.
For all $d \in \N$, all nonempty, open $U_1, \ldots, U_d \subseteq X$, and all essentially distinct $p \in \mathbb{Z}[x]^d$, if $R_p((\pi U_1)^\circ, \ldots, (\pi U_d)^\circ) \neq \emptyset$, then $R_p(U_1, \ldots, U_d) \neq \emptyset$.
\end{theorem}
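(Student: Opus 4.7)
The plan is to derive the statement as a direct consequence of Qiu's characteristic factor theorem \cite[Theorem B]{Qiu23} via the formal equivalence that \cref{thm_qiu_equivalent} is designed to establish. Qiu's theorem identifies the maximal $\infty$-step pro-nilfactor as characteristic for polynomial multiple recurrence along essentially distinct polynomial tuples in a minimal system; the work here is essentially one of translation, recasting that orbit-closure statement into the return-time-set language used above.

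Unpacking the hypothesis yields $n \in \Z$ and $y \in X_\infty$ with $T^{p_i(n)} y \in (\pi U_i)^\circ$ for each $i$. Since $(\pi U_i)^\circ \subseteq \pi(U_i)$, I may pick preimages $x_i \in U_i$ with $\pi(x_i) = T^{p_i(n)} y$, producing a point $(x_1, \ldots, x_d) \in U_1 \times \cdots \times U_d$ whose $\pi^d$-image lies on the true $p$-orbit of $(y, \ldots, y)$ in $X_\infty^d$. Next, fix any $x_0 \in \pi^{-1}(y)$ and let $Z \subseteq X^d$ denote the $p$-orbit closure of $(x_0, \ldots, x_0)$; its image under $\pi^d$ is the corresponding orbit closure $Z_\infty \subseteq X_\infty^d$. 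Qiu's theorem upgrades this from a mere surjection of closures to a lifting statement, to the effect that any open neighborhood of any preimage in $X^d$ of a point in $Z_\infty$ is actually visited by the $p$-orbit of $(x_0, \ldots, x_0)$. Applying this to the neighborhood $U_1 \times \cdots \times U_d$ of $(x_1, \ldots, x_d)$ yields some $m \in \Z$ with $T^{p_i(m)} x_0 \in U_i$ for all $i$, placing $m$ in $R_p(U_1, \ldots, U_d)$.

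The principal obstacle is that the factor map $\pi$ is generally not open, so the image $\pi U_i$ is neither open nor equal to its interior $(\pi U_i)^\circ$. A direct comparison of $R_p(U_1, \ldots, U_d)$ with $R_p(\pi U_1, \ldots, \pi U_d)$ therefore fails to sit cleanly in the open-set framework in which Qiu's theorem is naturally phrased. The insertion of the interiors in the hypothesis is exactly the calibration that bridges the two formulations, and I expect \cref{thm_qiu_equivalent} to be where this calibration is rigorously packaged; once that equivalence is in hand, the argument above reduces to a direct invocation of Qiu's lifting.
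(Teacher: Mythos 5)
You have correctly identified the source theorem and the overall direction -- the statement is indeed proved in the paper as the ``\cref{thm_QiuB} implies \cref{thm_qiu_eqiuvalent_statement}'' half of \cref{thm_qiu_equivalent} -- but the concrete argument you give has a genuine gap. The ``lifting statement'' you attribute to Qiu, namely that the polynomial orbit closure $Z$ of $(x_0,\ldots,x_0)$ equals the full preimage $(\pi^{d})^{-1}(Z_\infty)$, is a saturation property that Qiu's theorem provides only for a residual set of base points, and -- crucially -- only in an almost 1--1 extension $X^*$ of $X$ on which the induced factor map $\pi^*\colon X^*\to X^*_\infty$ is \emph{open}. You apply it to an arbitrary $x_0\in\pi^{-1}(y)$, for the specific $y$ produced by the hypothesis, in the original system $X$ where $\pi$ is merely semiopen; neither the genericity of $x_0$ nor the openness of $\pi$ is available there, and the claim can fail for non-generic points (already in the weakly mixing case, where $X_\infty$ is trivial, saturation for every diagonal point would force every point to have dense polynomial orbit in $X^d$). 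Your closing paragraph acknowledges this obstacle but proposes to resolve it by citing \cref{thm_qiu_equivalent} -- which is circular, since the statement you are proving \emph{is} one direction of that equivalence.

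The paper's proof supplies exactly the missing bridge: it invokes the O-diagram to obtain almost 1--1 extensions $\tau\colon X^*\to X$ and $\sigma\colon X^*_\infty\to X_\infty$ with $\pi^*$ open, uses \cref{lemma_multiple_set_returns_lift_in_almost_11_extensions} to show that almost 1--1 extensions leave the return-time sets $R_p$ unchanged (this is where the interiors $(\pi U_i)^\circ$ are consumed), and only then applies \cref{thm_QiuB} upstairs. A second point you omit: \cref{thm_QiuB} requires $p\in\Pol_d$, i.e.\ $p_i(0)=0$, whereas your $p$ is only essentially distinct; the paper fixes this by first locating one time $n$ in the factor, translating to $q_i(m)=p_i(m+n)-p_i(n)$ and to the shifted sets $T^{-p_i(n)}\tau^{-1}U_i$ via \cref{lem_translated_poly_returns}, and applying Qiu's theorem to $q$. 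To repair your write-up you would need to (i) run the orbit-closure argument in $X^*$ for a generic point, or equivalently use the open-set formulation of \cref{thm_QiuB}, (ii) prove the almost 1--1 transfer lemma rather than defer it, and (iii) perform the polynomial translation.
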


\cref{thm_qiu_eqiuvalent_statement} says that recurrence among the sets $U_i$ along $p$ in a system is guaranteed by recurrence among the interiors of their images $(\pi U_i)^\circ$ along $p$ in the system's maximal $\infty$-step pronilfactor; in essence, recurrence along $p$ can be ``lifted'' from a system's $\infty$-step pronilfactor.  This is a surprising and highly useful converse to the fact that recurrence in a system implies recurrence in its factors, a simple consequence of the inclusion
\begin{align}
    \label{eqn_factor_recurrence_containment}
    R_p(U_1, \ldots, U_d) \subseteq R_p\big((\pi U_1)^\circ, \ldots, (\pi U_d)^\circ\big),
\end{align}
as shown in \cref{lemma_containment_in_one_direction}.

While \cref{thm_qiu_eqiuvalent_statement} can quickly be upgraded to show under the same hypotheses that the set $R_p(U_1, \ldots, U_d)$ is syndetic (see \cref{lem_nonempty_implies_syndetic}), it says nothing of how much larger $R_p\big((\pi U_1)^\circ, \ldots, (\pi U_d)^\circ\big)$ might be than $R_p(U_1, \ldots, U_d)$.  Our main result addresses this by showing that the two sets differ by a set that is small, in the sense that it is not piecewise syndetic.  Syndeticity and piecewise syndeticity are defined in \cref{sec_families}.

\begin{Maintheorem}
\label{mainthm_ps_structure_theory_pol}
Let $(X,T)$ be a minimal and invertible topological dynamical system. Denote by $(X_\infty,T)$ its maximal $\infty$-step pro-nilfactor, and let $\pi\colon X\to X_\infty$ be the associated factor map.
For all $d \in \N$, all nonempty, open $U_1, \ldots, U_d \subseteq X$, and all essentially distinct $p \in \mathbb{Z}[x]^d$, the difference set
\[
    R_p\big((\pi U_1)^\circ, \ldots, (\pi U_d)^\circ\big) \setminus R_p(U_1, \ldots, U_d)
\]
is not piecewise syndetic.
\end{Maintheorem}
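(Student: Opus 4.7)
The plan is to proceed by contradiction. Suppose
\[
D := R_p\big((\pi U_1)^\circ, \ldots, (\pi U_d)^\circ\big) \setminus R_p(U_1, \ldots, U_d)
\]
is piecewise syndetic. The strategy is to encode the combinatorial structure of $D$ as a minimal subshift, form a joining with $(X, T)$, and apply \cref{thm_qiu_eqiuvalent_statement} to this joining to produce a simultaneous recurrence event that contradicts the defining disjointness $D \cap R_p(U_1, \ldots, U_d) = \emptyset$.

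First, I would invoke the standard dynamical characterization of piecewise syndeticity: the shift-orbit closure of $\mathbf{1}_D \in \{0,1\}^\Z$ contains a minimal point $\eta$ with $\eta(0) = 1$. Let $(Y, \sigma)$ be the corresponding minimal subshift, $V := \{\xi \in Y : \xi(0) = 1\}$ a clopen set, and $S := \{n : \sigma^n \eta \in V\}$, which is syndetic by minimality. Fix $n_k \to \infty$ with $\sigma^{n_k} \mathbf{1}_D \to \eta$; then for every finite $F \subseteq S$, $n_k + F \subseteq D$ for all $k$ large enough. Second, I would pass to the product system $(X \times Y, T \times \sigma)$, extract a minimal subsystem $Z$ containing a point of the form $(x_*, \eta)$ (with $x_* = \lim T^{n_k} x_0$ along a subsequence, via an Ellis-semigroup argument), and identify the maximal $\infty$-step pronilfactor $Z_\infty$ of $Z$ as a subfactor of $X_\infty \times Y_\infty$. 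Using that $X_\infty$ is distal (so its enveloping semigroup is a group) together with Leibman's equidistribution theorem for polynomial orbits on nilmanifolds, the data $n_k + F \subseteq D$ translates, upon passage to the limit, into the nonemptiness of $R_p^Z\big((\Pi(U_i \times V))^\circ\big)$, where $\Pi : Z \to Z_\infty$ denotes the factor map.

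Third, applying \cref{thm_qiu_eqiuvalent_statement} to $(Z, T \times \sigma)$ with the open sets $U_i \times V$ and tuple $p$ yields some $n \in \Z$ and $(x, y) \in Z$ with $T^{p_i(n)} x \in U_i$ and $\sigma^{p_i(n)} y \in V$ for all $i$. Unwinding the joining --- the point $y$ sits in the orbit closure of $\eta$, the set $V$ detects the pattern $\eta$, and $\eta$ encodes the shifted pattern of $D$ --- one obtains some $m \in \Z$ simultaneously witnessing $m \in D$ and $m \in R_p(U_1, \ldots, U_d)$, contradicting the definition of $D$. The main obstacle will be the joining analysis in step two: correctly identifying the maximal pronilfactor of $Z$, ruling out hidden factors arising from $Y_\infty$, and carefully handling the polynomial shifts $p_i(n+n_k)$ --- these share the leading coefficients of $p_i$ (so essential distinctness is preserved) but their lower-order coefficients vary with $n_k$ and must be controlled by the structural theory of polynomial sequences on nilmanifolds. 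The distality of $X_\infty$, which makes the enveloping-semigroup limits homeomorphisms at the nilfactor level, is essential throughout.
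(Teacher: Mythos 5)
There are genuine gaps in your argument, and the approach as described would fail at two specific points. First, the ``unwinding'' in your third step does not produce a contradiction because of an index mismatch. Applying \cref{thm_qiu_eqiuvalent_statement} to the joining $Z \subseteq X \times Y$ with sets $U_i \times V$ and tuple $p$ yields $n$ and $(x,y) \in Z$ with $T^{p_i(n)}x \in U_i$ and $\sigma^{p_i(n)}y \in V$ for all $i$. The $X$-coordinate certifies $n \in R_p(U_1,\ldots,U_d)$, but the $Y$-coordinate certifies (after approximating $y$ by shifts of $\mathbf{1}_D$) only that $p_i(n) + m \in D$ for some auxiliary shift $m$ and each $i$ --- it says nothing about membership of $n$ itself in $D$. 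Since the integers $p_i(n)+m$ and $n$ are unrelated, no element of $D \cap R_p(U_1,\ldots,U_d)$ is produced and there is no contradiction. To make the subshift coordinate detect ``$n \in D$'' you would need to attach the identity polynomial to it, which changes the polynomial tuple and the sets, and even then you only control a translate $n+m$ of the recurrence time, not $n$ itself (the minimal point $\eta$ need not be $\mathbf{1}_D$). Second, and independently, the hypothesis of \cref{thm_qiu_eqiuvalent_statement} for the joining is exactly the step you flag as ``the main obstacle'' but never address: the maximal $\infty$-step pronilfactor $Z_\infty$ of $Z$ can be strictly larger than $X_\infty$, containing contributions from the pronilfactor of the arbitrary minimal subshift $Y$, and the inclusion $D \subseteq R_p\big((\pi U_1)^\circ,\ldots\big)$ only gives recurrence in $X_\infty$. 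Producing simultaneous polynomial recurrence in $X_\infty$ and in the pronilfactor of $Y$ is essentially a problem of the same difficulty as the theorems in this paper that require explicit disjointness hypotheses (cf.\ \cref{Mainthm:odd-recurrence-pol}); distality of $X_\infty$ and equidistribution on nilmanifolds do not supply it.

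For contrast, the paper's proof avoids joinings and enveloping semigroups entirely. Its engine is the elementary combinatorial \cref{lemma_diff_not_ps}: if $B \subseteq A \subseteq B - \family$ for an upward-closed family $\family$ all of whose finite intersections are syndetic, then $A\setminus B$ is not piecewise syndetic. One takes $\family$ to be (the upward closure of) the collection of return-time sets $R_q(W_1,\ldots,W_d)$ with $\pi W_1 \cap \cdots \cap \pi W_d \neq \emptyset$; the translation identity of \cref{lem_translated_poly_returns} shows $R_p\big(\pi U_1,\ldots,\pi U_d\big) \subseteq R_p(U_1,\ldots,U_d) - \family$, the filter property is \cref{lemma_special_sets_of_returns_form_a_filterpol}, and syndeticity of each member is \cref{thm_QiuC} (Qiu's theorem upgraded via Bergelson--McCutcheon), all after a reduction to open factor maps via the O-diagram. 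The translation lemma is precisely what absorbs the shift mismatch that derails your argument.
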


We give examples in \cref{sec_equicont_factor} showing that the conclusion of \cref{mainthm_ps_structure_theory_pol} does not hold if $X_\infty$ is replaced by the system's maximal equicontinuous factor or if the essential distinctness assumption on $p$ is omitted. On the other hand, by the containment in \eqref{eqn_factor_recurrence_containment}, \cref{mainthm_ps_structure_theory_pol} is quickly seen to be equivalent to the ostensibly stronger version in which the factor map $\pi : X \to X_\infty$ is replaced by any factor map $\pi' : X \to Y$ to a larger factor $Y$, in the sense that there is a factor map $\eta : Y \to X_\infty$ with $\pi = \eta \circ \pi'$.

\cref{mainthm_ps_structure_theory_pol} is proved in \cref{sec_poly_ps_structure}.  The primary ingredients include a set-algebraic mechanism for proving non-piecewise syndeticity (see \cref{lemma_diff_not_ps}); the IP Polynomial \Szemeredi{} theorem of Bergelson and McCutcheon \cite[Lemma 6.12]{Bergelson_McCutcheon00}; and the polynomial topological characteristic factor theorem of Qiu \cite[Theorem~B]{Qiu23} mentioned above.

\subsection{Applications}
\label{sec_applications_intro}

\cref{mainthm_ps_structure_theory_pol} serves to reduce questions of multiple polynomial recurrence in minimal systems to ones in inverse limits of minimal nilsystems, a setting where recurrence is much better understood. We present four such applications below. All topological dynamical systems -- henceforth, just ``systems'' -- appearing in this work are invertible.

A set $A \subseteq \Z$ is a {\it set of multiple topological recurrence} if for all minimal systems $(X,T)$, all nonempty, open $U \subseteq X$, and all $d \in \N$, there exists $n \in A$ such that
\[
    U \cap T^{-n} U \cap \cdots \cap T^{-dn} U \neq \emptyset.
\]

We change ``multiple'' to ``single'' when $d = 1$, and we say that the set $A$ is a set of topological recurrence {\it for a family of systems} when the definition holds for all systems in the family.
Determining necessary and sufficient conditions for a set to be a set of topological recurrence has been an important objective, both for theory and applications.
That $\N$ is a set of multiple topological recurrence follows from Furstenberg and Weiss's topological multiple recurrence theorem \cite[Theorem~1.4]{Furstenberg_Weiss78}.

The two-point rotation shows that the set of odd numbers fails to be a set of single recurrence. 
This system is, however, the essential obstruction: in minimal systems disjoint from the two-point rotation (equivalently, in minimal systems $(X,T)$ for which $(X,T^2)$ is minimal), the set of odd numbers is a set of multiple recurrence.  
This was proved by Glasner, Huang, Shao, Weiss, and Ye, who established the following more general result.

\begin{theorem}[{\cite[Theorem D]{glasner_huang_shao_weiss_ye_2020}}]
\label{thm:odd-recurrence-linear-glasner}
Let $(X,T)$ be a system and $k, d \in \N$. 
If $(X,T^k)$ is minimal, then for all open $\emptyset\neq U \subseteq X$, the set 
\[\big\{ n \in \Z : \ U \cap T^{-n} U \cap \cdots \cap T^{-dn} U \neq \emptyset \big\}\]
has nonempty intersection with every infinite arithmetic progression of step size $k$.
\end{theorem}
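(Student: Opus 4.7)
The plan is to reformulate the theorem as nonemptiness of a polynomial return-time set $R_p$ for an essentially distinct polynomial tuple, to pass to the maximal $\infty$-step pronilfactor $X_\infty$ via \cref{thm_qiu_eqiuvalent_statement}, and to conclude with a multiple recurrence argument on $X_\infty$ that exploits both its nilpotent structure and the hypothesis that $(X_\infty, T^k)$ is minimal.

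Fix an arbitrary arithmetic progression $a + k\Z \subseteq \Z$ and introduce $p = (p_1, \ldots, p_{d+1}) \in \Z[x]^{d+1}$ with $p_i(n) := (i-1)(a+kn)$. For $i \neq j$, the difference $p_i - p_j = (i-j)(a + kx)$ has leading coefficient $(i-j)k \neq 0$, so $p$ is essentially distinct. Since $R_p(U, \ldots, U) \neq \emptyset$ is equivalent to the existence of $n_0 \in \Z$ with $a + k n_0$ in the multiple-recurrence set of the theorem, it suffices to prove $R_p(U, \ldots, U) \neq \emptyset$. By \cref{thm_qiu_eqiuvalent_statement}, this is in turn implied by $R_p(V, \ldots, V) \neq \emptyset$ in $X_\infty$, where $V := (\pi U)^\circ$. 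The set $V$ is nonempty because $\pi$ is a factor map between minimal systems and is therefore semi-open.

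It remains to establish the recurrence statement on the pronilfactor: produce $n \in \Z$ such that $V \cap T^{-(a+kn)}V \cap \cdots \cap T^{-d(a+kn)}V \neq \emptyset$. Since $(X_\infty, T^k)$ is a factor of the minimal $(X, T^k)$, it is itself minimal. I would approximate $X_\infty$ by a minimal nilsystem quotient $Y = G/\Gamma$ on which the descended open set $\tilde V$ is nonempty and $T^k$ remains minimal, and invoke Leibman's equidistribution theorem for polynomial orbits on nilmanifolds. This identifies the closure
\[
\overline{\big\{(y,\, T^{a+kn}y,\, T^{2(a+kn)}y,\, \ldots,\, T^{d(a+kn)}y) : n \in \Z\big\}}
\]
in $Y^{d+1}$ as a sub-nilmanifold $W_y$ governed by the tuple $\{(i-1)(a+kx)\}$; the $T^k$-minimality hypothesis then forces $W_y$ to contain the diagonal point $(y, y, \ldots, y)$. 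Choosing $y \in \tilde V$ and using openness of $\tilde V^{d+1}$ produces the required $n$ at the nilsystem level, and a compactness argument lifts it back through the inverse limit to $X_\infty$.

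The main obstacle lies in the nilmanifold computation. Because the polynomials $p_i$ have nonzero constant terms $(i-1)a$, standard equidistribution statements for polynomials vanishing at the origin do not apply verbatim; one must show that $T^k$-minimality combined with the specific affine form of the tuple forces the orbit closure to contain the diagonal. In the equicontinuous case this is immediate from density of $\{(a + kn)\alpha \bmod 1 : n \in \Z\}$ in $\T$; in the general nilpotent setting it requires careful bookkeeping of the rational subgroup generated by the polynomial data, after which the inverse-limit lift follows by compactness.
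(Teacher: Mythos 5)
Your reformulation and reduction steps are sound, and they mirror the route the paper itself takes: this statement is quoted from Glasner--Huang--Shao--Weiss--Ye rather than proved directly, but it is reproved (and strengthened to dynamical syndeticity) as the case $p(n)=n$ of \cref{cor:pol_k_p}, whose proof is exactly your outline --- pass to $X_\infty$, descend to a finite-stage nilsystem factor, and prove the recurrence there. The tuple $p_i(n)=(i-1)(a+kn)$ is indeed essentially distinct, \cref{thm_qiu_eqiuvalent_statement} applies, and $V=(\pi U)^\circ\neq\emptyset$ by semi-openness. (One cosmetic point: an element of $a+k\Z$ only gives intersection with the two-sided progression; to meet every one-sided infinite progression you should also invoke \cref{lem_nonempty_implies_syndetic}, which upgrades nonemptiness of $R_p(U,\dots,U)$ to syndeticity.)

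The genuine gap is the nilsystem step, which you flag as ``the main obstacle'' but then treat as bookkeeping; it is in fact the mathematical core, and the specific claim you make there is unjustified. Write $\tilde T = I\times T\times\cdots\times T^d$. The closure of $\{(y,T^{a+kn}y,\dots,T^{d(a+kn)}y):n\in\Z\}$ is $\tilde T^a$ applied to the $\tilde T^k$-orbit closure of $(y,\dots,y)$, so it contains the diagonal point for \emph{every} residue $a$ precisely when $(Z_y,\tilde T^k)$ is minimal, where $Z_y=\overline{\{\tilde T^n(y,\dots,y):n\in\Z\}}$ --- equivalently, when $(Z_y,\tilde T)$ has no nontrivial eigenvalue with denominator dividing $k$. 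This does \emph{not} follow from $T^k$-minimality of $Y$ for an arbitrary base point $y$: the subsystem $(Z_y,\tilde T)$ can acquire rational eigenvalues (equivalently, extra connected components) at exceptional $y$, which is exactly why the paper's \cref{lem:same-mef} and \cref{lem:same-mef3} are stated only for Haar-almost every $x$ and rest on a nontrivial spectral theorem of Moreira--Richter. The paper's actual argument chooses $y$ generically in $\tilde V$ so that $Z_y$ is connected (total minimality case, \cref{lem:totall_minimal_nilsystems}, via \cref{lem:frantzikinakis_connected}), and handles disconnected $Y$ by the reparametrization $n\mapsto \ell n+s$ in \cref{lem:odd-rec-pol-nilsystems-ap}, where $\gcd(\ell,k)=1$ (the translation of the $T^k$-minimality hypothesis) is used to land in a totally minimal component. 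Until you either prove or cite such a genericity/connectedness statement for diagonal orbit closures, the plan does not close.
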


In the following subsections, we show how \cref{mainthm_ps_structure_theory_pol} allows us to upgrade and extend \cref{thm:odd-recurrence-linear-glasner} to dynamically-defined syndetic sets (\cref{Mainthm:odd-recurrence-pol}), to polynomial multiple recurrence (\cref{cor:pol_k_p}), and to totally minimal systems (\cref{cor:totally_minimal}).
We conclude by showing how \cref{mainthm_ps_structure_theory_pol} implies that a generalization of \cref{thm:odd-recurrence-linear-glasner} conjectured by Glasner, Huang, Shao, Weiss, and Ye is equivalent to a long-standing open conjecture of Leibman (\cref{mainthm_conjecture_are_equiv}). 

\subsubsection{Linear recurrence along dynamically syndetic sets}
\label{sec_linear_rec_along_dy_synd_sets}

Our first application extends \cref{thm:odd-recurrence-linear-glasner} by replacing arithmetic progressions with sets of visit times in minimal systems satisfying a necessary disjointness condition. Given a minimal system $(Y,S)$, a point $y \in Y$, and a nonempty, open set $V \subseteq Y$, we write
\[R_S(y,V) \defeq \big\{ n \in \Z \ \big| \ S^n y \in V \big\}\]
for the set of times that $y$ visits $V$.
We call a subset of the integers {\it dynamically syndetic} if it contains a set of the form $R_S(y,V)$. Due to minimality, every dynamically syndetic set is syndetic; the converse is false: the set
\[
    \{n \in \Z: n \leq 0\} \cup \bigg( 2 \N \cap \bigcup_{\substack{k = 0 \\ \text{$k$ even}}}^\infty \big[ 2^{k}, 2^{k + 1}\big) \bigg) \cup \bigg( \big( 2\N + 1 \big) \cap \bigcup_{\substack{k = 0 \\ \text{$k$ odd}}}^\infty \big[ 2^{k}, 2^{k + 1}\big) \bigg).
\]
is syndetic, but not dynamically syndetic. For a proof, see \cite{glasscock_le_2024}.

\begin{remark}
    When $y \in V$, the set $R_S(y,V)$ is a set of polynomial multiple topological recurrence. Indeed, by IP Polynomial \Szemeredi{} theorem of Bergelson and McCutcheon \cite[Lemma 6.12]{Bergelson_McCutcheon00}, the set $R_p(U,\ldots,U)$ has non-empty intersection with every $\ip$ set; in particular, it has nonempty intersection with every central set. When $y \in V$, the set $R_S(y,V)$ is central, and hence has nonempty intersection with $R_p(U,\ldots,U)$. For the definitions of $\ip$ and central sets, see eg. \cite[Ch. 8.3]{Furstenberg81}.
\end{remark}

We show in the following theorem that all sets of the form $R_S(y,V)$ -- not just those for which $y \in V$ -- are sets of multiple linear topological recurrence for all systems $(X,T)$ that are sufficiently disjoint from $(Y,S)$. See \cref{sec_prelims_top_dyn} for the requisite definitions.

\begin{Maintheorem}
\label{Mainthm:odd-recurrence-pol}
Let $(X, T)$ and $(Y, S)$ be minimal systems that do not share any nontrivial eigenvalues.
For all nonempty, open sets $U \subseteq X$ and $V \subseteq Y$, all $y \in Y$ (not necessarily in $V$), and all $d \in \N$, the set
\begin{align}
\label{eqn_set_to_show_dcs}
    \big\{ n \in \Z: \ U \cap T^{-n} U \cap \cdots \cap T^{-dn} U \neq \emptyset \big\} \cap R_S(y,V)
\end{align}
is dynamically syndetic.
\end{Maintheorem}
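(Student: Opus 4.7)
The plan is to apply \cref{mainthm_ps_structure_theory_pol} to $(X,T)$ to transfer the multiple recurrence set to the maximal $\infty$-step pronilfactor, realize the transferred set as the visit-time set of a diagonal point in an auxiliary minimal pronilsystem arising from the diagonal power action, and then cross with $(Y,S)$ using the eigenvalue disjointness hypothesis.

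Applying \cref{mainthm_ps_structure_theory_pol} to $(X,T)$ with the essentially distinct polynomial tuple $p = (0, x, 2x, \ldots, dx) \in \Z[x]^{d+1}$ and open sets $U_1 = \cdots = U_{d+1} = U$ yields that $A \coloneqq R_p(U,\ldots,U)$ and $B \coloneqq R_p((\pi U)^\circ,\ldots,(\pi U)^\circ)$, with $\pi\colon X\to X_\infty$ the factor map, satisfy $A \subseteq B$ and $B \setminus A$ is not piecewise syndetic. Writing $A \cap R_S(y,V) = (B \cap R_S(y,V)) \setminus ((B \setminus A) \cap R_S(y,V))$, it will suffice to exhibit a dynamically syndetic subset $E$ of $B \cap R_S(y,V)$ and then apply a set-algebraic lemma (in the spirit of \cref{lemma_diff_not_ps}) asserting that the difference of a dynamically syndetic set and a non-piecewise-syndetic set is dynamically syndetic.

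To produce $E$, pick $\bar{x}_0 \in (\pi U)^\circ$ and let $(\tilde{X}, R)$ denote the orbit closure of $(\bar{x}_0, \ldots, \bar{x}_0) \in X_\infty^d$ under the diagonal power action $R \coloneqq T \times T^2 \times \cdots \times T^d$. Since $X_\infty$ is distal, so is $(X_\infty^d, R)$, and hence $(\tilde{X}, R)$ is a minimal pronilsystem; the base point lies in $((\pi U)^\circ)^d \cap \tilde{X}$, so this open subset is nonempty. Now cross with $(Y, S)$: a product-character computation on $X_\infty^d$ shows every eigenvalue of $R$ on $\tilde{X}$ has the form $\lambda_1 \lambda_2^2 \cdots \lambda_d^d$ for eigenvalues $\lambda_i$ of $(X, T)$, and hence lies in the eigenvalue group of $(X, T)$. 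By hypothesis this group meets the eigenvalue group of $(Y,S)$ only trivially; since $\tilde{X}$ is minimal distal, any common factor of $\tilde{X}$ and $Y$ is distal with trivial Kronecker, hence trivial, and Furstenberg's classical disjointness theorem then yields that $(\tilde{X} \times Y, R \times S)$ is minimal. Setting
\[
    E \coloneqq R_{R \times S}\big(((\bar{x}_0, \ldots, \bar{x}_0), y),\, (((\pi U)^\circ)^d \cap \tilde{X}) \times V\big),
\]
$E$ is dynamically syndetic (as a visit-time set to a nonempty open set in a minimal system), and unpacking the definitions gives $E \subseteq B \cap R_S(y,V)$.

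The main technical step is the minimality of $(\tilde{X} \times Y, R \times S)$, which rests on the product-character computation of $R$-eigenvalues on $\tilde{X}$ together with the classical fact that a minimal distal system and a minimal system sharing no nontrivial eigenvalues are disjoint. The closing set-algebraic lemma is expected to be a routine adaptation of \cref{lemma_diff_not_ps}.
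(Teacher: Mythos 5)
Your overall architecture matches the paper's: apply \cref{mainthm_ps_structure_theory_pol} with the tuple $(0,x,2x,\dots,dx)$, reduce to showing that $R_p((\pi U)^\circ,\dots,(\pi U)^\circ)\cap R_S(y,V)$ is dynamically syndetic, and realize a subset of that intersection as a visit-time set of a point in the product of a diagonal-power orbit closure with $(Y,S)$, using a disjointness criterion. (Two smaller remarks: the closing set-algebraic step is the paper's \cref{lemma_pwssetdifference}, whose dynamically syndetic case is not a routine adaptation of \cref{lemma_diff_not_ps} but an appeal to an external theorem of Glasscock--Le; and before choosing a base point one should first replace $(\pi U)^\circ$ by the preimage of an open set in a genuine nilsystem factor of $X_\infty$, as the paper does.)

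The genuine gap is your spectral claim that every eigenvalue of $(\tilde X,R)$, the orbit closure of an \emph{arbitrary} diagonal point under $T\times T^2\times\cdots\times T^d$, has the form $\lambda_1\lambda_2^2\cdots\lambda_d^d$ with each $\lambda_i$ an eigenvalue of $(X,T)$, hence lies in $\sigma(X,T)$. Eigenfunctions of the subsystem $\tilde X$ need not be restrictions of product eigenfunctions of $X_\infty^d$, and already for $2$-step nilsystems the orbit closure acquires eigenvalues, depending on the base point, that lie outside $\sigma(X,T)$. Concretely, take $X=\T^2$, $T(x_1,x_2)=(x_1+\alpha,x_2+x_1)$, so that $\sigma(X,T)=\Z\alpha$, and let $\tilde X$ be the orbit closure of $\big((x_1,x_2),(x_1,x_2)\big)$ under $T\times T^2$. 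On $\tilde X$ one has $2u_1-v_1=x_1$ identically, and the continuous function $g(u,v)=e^{2\pi i(4u_2-v_2)}$ satisfies
\[
g\circ(T\times T^2)=e^{2\pi i(2x_1-\alpha)}\,g \quad\text{on } \tilde X,
\]
so $2x_1-\alpha$ is an eigenvalue of $(\tilde X,T\times T^2)$; for generic $x_1$ it does not lie in $\Z\alpha$. Taking $(Y,S)$ to be the rotation by $2x_1-\alpha$ then gives systems sharing no nontrivial eigenvalue for which your $(\tilde X\times Y, R\times S)$ is not minimal, so the argument breaks for that choice of base point. This is precisely why the paper's \cref{lem:same-mef} selects the base point from a full-Haar-measure subset of (a nilsystem factor of) $X$, invoking a genericity theorem of Moreira--Richter (for each fixed $\theta\notin\sigma(X,T)$, almost every base point avoids $\theta$ as an eigenvalue of its orbit closure) together with countability of $\sigma(Y,S)$, plus a separate treatment of disconnected nilsystems; full support of the Haar measure then guarantees such a point can be found inside the open set. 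Your proof needs this genericity input; the ``product-character computation'' as stated is false.
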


In the case that $(Y,S)$ is a rotation of $k$ points, the condition that $(X,T)$ and $(Y,S)$ share no common eigenvalues is readily seen to be equivalent to the condition that the system $(X,T^k)$ is minimal. 
In this way, \cref{Mainthm:odd-recurrence-pol} generalizes \cref{thm:odd-recurrence-linear-glasner} from infinite progressions to other dynamically syndetic sets $R_S(y, V)$, including Beatty sequences $\{\lfloor n \alpha \rfloor: n \in \Z\}$ (arising from irrational rotations); $\{n \in \Z : \| n^2 \alpha \| < \epsilon \}$ (arising from nilsystems);  $\{n \in \Z : \nu_2(n) \text{ is even}\}$, where $\nu_2(n)$ denotes the $2$-adic valuation of an integer $n$ (arising from an almost 1--1 extension of an odometer); and so on.

The assumption that the systems $(X,T)$ and $(Y,S)$ have no common eigenvalues is necessary. 
Indeed, if $(X,T)$ and $(Y,S)$ share a common eigenvalue, then by \cite[Theorem 5]{Peleg72} (see also \cite{MR467704}), the product system $(X \times Y, T \times S)$ is not transitive: there exist nonempty, open sets $U_1, U_2 \subseteq X$ and $V_1, V_2 \subseteq Y$ such that for all $n \in \Z$, $(U_1 \times V_1) \cap (T \times S)^{-n} (U_2 \times V_2) = \emptyset$.  Let $n \in \Z$ be such that $U \defeq U_1 \cap T^{-n} U_2 \neq \emptyset$.  We see that for all $n \in \Z$, $(U \times V_1) \cap (T \times S)^{-n} (U \times V_2) = \emptyset$, whereby
\[\big\{ n \in \Z: \ U \cap T^{-n} U \neq \emptyset \big\} \cap \big\{ n \in \Z: \ V_1 \cap T^{-n} V_2 \neq \emptyset \big\} = \emptyset.\]
This contradicts the conclusion in \cref{Mainthm:odd-recurrence-pol} for any point $y \in V_1$, since for such points, $R_S(y,V_2) \subseteq \{n \in \Z : V_1 \cap T^{-n} V_2\}$.

The proof of \cref{Mainthm:odd-recurrence-pol} is completed in \cref{sec_proof_of_thm_b}.  We first use \cref{mainthm_ps_structure_theory_pol} to reduce it to the case that $(X,T)$ is a minimal nilsystem.  In that case, we show that the set in \eqref{eqn_set_to_show_dcs} is dynamically syndetic because it contains the set of times that a point $(x, \ldots, x, y) \in X^{d+1} \times Y$ visits the set $U \times \cdots \times U \times V$ under the map $I \times T \times T^2 \times \cdots \times T^d \times S$. The existence of such a point follows by \cref{lem:same-mef}, which guarantees that a generic diagonal point in $X^{d+1}$ generates a system that is disjoint from $(Y,S)$.

\subsubsection{Polynomial recurrence along arithmetic progressions}

Our second application of \cref{mainthm_ps_structure_theory_pol} is a generalization of \cref{thm:odd-recurrence-linear-glasner} to   polynomial multiple recurrence with dynamically syndetic intersections.  We say that two sets have {\it dynamically syndetic intersection} if their intersection is a dynamically syndetic set.

\begin{Maintheorem}
\label{cor:pol_k_p}
Let $(X,T)$ be a system and $k, d \in \N$. If $(X,T^k)$ is minimal, then for all polynomials $p \in \Z[x]$ with $p(0) = 0$ and all nonempty, open $U \subseteq X$, the set
\[\big\{ n \in \Z: \ U \cap T^{-p(n)} U \cap \cdots \cap T^{-dp(n)} U \neq \emptyset \big\}\]
has dynamically syndetic intersection with every infinite arithmetic progression of step size $k$.
\end{Maintheorem}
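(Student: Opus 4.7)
My plan is to derive \cref{cor:pol_k_p} from \cref{mainthm_ps_structure_theory_pol} via a reduction to the maximal infinite-step pro-nilfactor, followed by a change of variables that neatly decouples the polynomial multi-recurrence from the arithmetic progression constraint. First, the tuple $\mathbf{p} = (0, p, 2p, \ldots, dp) \in \Z[x]^{d+1}$ is essentially distinct whenever $p \neq 0$ (the case $p \equiv 0$ is immediate). By \cref{mainthm_ps_structure_theory_pol}, the set $R_{\mathbf{p}}(U, \ldots, U)$ and its pro-nilfactor analog $R_{\mathbf{p}}^{X_\infty}\bigl((\pi U)^\circ, \ldots, (\pi U)^\circ\bigr)$ differ by a non-piecewise-syndetic subset. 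Together with a preservation-type lemma---that removing a non-piecewise-syndetic subset from a dynamically syndetic set leaves a dynamically syndetic set---this reduces the problem to establishing the conclusion in $(X_\infty, T)$ with $(\pi U)^\circ$ in place of $U$. Since $X_\infty$ is the inverse limit of minimal nilsystems and $(\pi U)^\circ$ is open, an approximation argument further reduces to the case of a minimal nilsystem $(Z, S)$ with $(Z, S^k)$ minimal (the latter inherited from $(X, T^k)$ through the factor maps) and an open nonempty $W \subseteq Z$.

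To handle the arithmetic progression $a + k\Z$, I substitute $n = a + km$ and write $q(m) \defeq p(a + km) = p(a) + q'(m)$ with $q'(0) = 0$. A binomial expansion (using $p(0) = 0$) shows that every coefficient of $q'$ is divisible by $k$, so $q'(m) = k \cdot r(m)$ for some polynomial $r \in \Z[m]$ with $r(0) = 0$, and $r$ is non-constant whenever $p$ is. Using the identity $S^{-ip(n)} W = S^{-ip(a)} (S^k)^{-ir(m)} W$ (valid for $n = a + km$), the target set equals $a + kM$ where
\[
M \defeq \bigl\{ m \in \Z \ : \ W_0 \cap (S^k)^{-r(m)} W_1 \cap \cdots \cap (S^k)^{-dr(m)} W_d \neq \emptyset \bigr\}, \qquad W_i \defeq S^{-ip(a)} W.
\]
Each $W_i$ is open and nonempty, and the tuple $(0, r, 2r, \ldots, dr)$ is essentially distinct. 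Thus $M$ is the polynomial multi-recurrence set for an essentially distinct tuple with zero initial entry in the minimal nilsystem $(Z, S^k)$.

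That $M$ is dynamically syndetic follows from Leibman's polynomial equidistribution theorem applied to $(Z, S^k)$: for a generic choice of $z_0 \in Z$, the polynomial orbit $m \mapsto (z_0, (S^k)^{r(m)} z_0, \ldots, (S^k)^{dr(m)} z_0)$ is dense in a sub-nilmanifold $Y(z_0) \subseteq Z^{d+1}$ meeting $W_0 \times \cdots \times W_d$, and is realized as the image of a linear orbit in a larger minimal nilsystem under an equivariant map, so the visit times to $W_0 \times \cdots \times W_d$ form a return time set contained in $M$. To finally promote ``$M$ dynamically syndetic'' to ``$a + kM$ dynamically syndetic,'' pick $R_R(y_0, V) \subseteq M$ with $(Y, R)$ minimal, and construct the tower $(Y \times \Z/k\Z, R_\star)$ with $R_\star(y, j) = (y, j+1)$ for $j < k-1$ and $R_\star(y, k-1) = (R y, 0)$; this tower is minimal, and the return time of $(y_0, \bar{0})$ to $V \times \{\bar{a}\}$ equals $a + k R_R(y_0, V) \subseteq a + kM$, giving the desired dynamical syndeticity. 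The main technical hurdle is the preservation lemma in the first paragraph, together with carefully applying Leibman's theorem to extract an honest return time set (not merely a set of positive density) from the polynomial orbit; the divisibility observation for $q'$ and the tower construction for $a + kM$ are elementary but essential to aligning the pieces.
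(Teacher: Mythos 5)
Your global strategy matches the paper's: reduce via \cref{mainthm_ps_structure_theory_pol} and \cref{lemma_pwssetdifference} to the maximal $\infty$-step pronilfactor, approximate the open set through the inverse limit to land in a minimal nilsystem factor $(Z,S)$ with $(Z,S^k)$ minimal, and finally promote the result from a sub-progression back to $a+k\Z$ by a finite tower (this last step is the paper's citation of \cite[Lemmas 3.3 and 3.4]{glasscock_le_2024}). The divisibility computation for $q'$ is also correct. The problem is in the nilsystem case itself.

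The gap is the assertion that, for a generic $z_0$, the polynomial orbit closure $Y(z_0)=\overline{\{(z_0,(S^k)^{r(m)}z_0,\ldots,(S^k)^{dr(m)}z_0):m\in\Z\}}$ meets $W_0\times\cdots\times W_d$ with $W_i=S^{-ip(a)}W$. Unwinding the definitions, asking for a point of $Y(z_0)$ in this product is exactly asking for some $n\equiv a\pmod k$ with $z_0\in U\cap T^{-p(n)}U\cap\cdots\cap T^{-dp(n)}U$ -- i.e.\ it is the nonemptiness statement you are trying to prove. Leibman's equidistribution theorem identifies $Y(z_0)$ as a finite union of subnilmanifolds on which the orbit equidistributes, but gives no lower bound forcing it to meet a prescribed product of \emph{distinct} open sets; the whole difficulty of \cref{conj:polynomial-odd-recurrence} lives precisely here. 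Your substitution $n=a+km$ destroys the one feature that makes the diagonal argument work, namely that all $d+1$ target sets are the same, so that the base point $(x_0,\ldots,x_0)$ with $x_0\in U$ already lies in the target product. (A secondary obstruction: even to conclude that the polynomial orbit is dense in the \emph{linear} orbit closure of the diagonal via \cref{lem:frantzikinakis_connected}, you need that orbit closure to be connected, which fails when $Z$ is disconnected; $(Z,S^k)$ minimal does not make $Z$ connected.) The paper circumvents both issues by choosing the representative of the progression more carefully: with $\ell$ the number of connected components of $Z$ (coprime to $k$), it picks $s$ with $ks+j\equiv 0\pmod\ell$ and substitutes $n=k(\ell m+s)+j$, so that $p(n)=\ell\, q(m)$ for an integer polynomial $q$ with $q(0)=0$. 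The recurrence then takes place inside a single connected component $X_0$ under the totally minimal map $T^\ell$, with all target sets equal to the same $U_0=U\cap X_0$, and the diagonal-point argument (\cref{lem:same-mef3}, \cref{lem:frantzikinakis_connected}, \cref{lem:polynomial_in_nil_dynamical_syndetic}, assembled in \cref{lem:totall_minimal_nilsystems} and \cref{lem:odd-rec-pol-nilsystems-ap}) goes through. You should replace your change of variables with this one; the rest of your outline then aligns with the paper's proof.
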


We prove \cref{cor:pol_k_p} in \cref{sec_proof_of_theorem_b} by first establishing the result for profinite nilsystems, then using \cref{mainthm_ps_structure_theory_pol} to lift the result to arbitrary minimal systems.

\subsubsection{Recurrence in totally minimal systems}

For our third application of \cref{mainthm_ps_structure_theory_pol}, we show that the ``$p(0) = 0$'' assumption in \cref{cor:pol_k_p} can be omitted in totally minimal systems.  This improves on theorems of Glasner, Huang, Shao, Weiss, and Ye \cite[Cor. 6.3]{glasner_huang_shao_weiss_ye_2020} and Qiu \cite[cf. Theorem A]{Qiu23}, who show the $d=1$ case for quadratic polynomials and non-constant polynomials, respectively.

\begin{Maintheorem}
\label{cor:totally_minimal}
    Let $(X,T)$ be totally minimal and $d \in \N$. For all non-constant  polynomials $p \in \Z[x]$ and all nonempty, open $U\subseteq X$, the set
    \begin{align}
    \label{eq_totallyminimalpolynomialrecurrence}
       \{n \in \Z: \ U\cap T^{-p(n)}U\cap\cdots\cap T^{-dp(n)}U\neq\emptyset\}
    \end{align}
    is dynamically syndetic. 
\end{Maintheorem}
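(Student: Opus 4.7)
The plan is to use \cref{mainthm_ps_structure_theory_pol} to reduce the problem to a totally minimal pro-nilfactor and then invoke Leibman's theorems on polynomial orbits in nilmanifolds. Consider the tuple of $d+1$ polynomials $(p_0, p_1, \ldots, p_d) \defeq (0, p, 2p, \ldots, dp)$. Since $p$ is non-constant, each difference $p_i - p_j = (i-j)p$ is non-constant, so the tuple is essentially distinct. Let $\pi \colon X \to X_\infty$ be the factor map onto the maximal $\infty$-step pro-nilfactor and set $V \defeq (\pi U)^\circ$. Factors of totally minimal systems are totally minimal, so $(X_\infty, T)$ is also totally minimal. By \cref{mainthm_ps_structure_theory_pol}, the difference
\[
R_{(p_0,\ldots,p_d)}(V,\ldots,V) \setminus R_{(p_0,\ldots,p_d)}(U,\ldots,U)
\]
is not piecewise syndetic. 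Combined with the set-theoretic fact that the family of dynamically syndetic subsets of $\Z$ is closed under removing non-piecewise-syndetic subsets, this reduces the problem to showing that $R_{(p_0,\ldots,p_d)}(V,\ldots,V)$ is dynamically syndetic in $X_\infty$.

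To establish this, I first reduce further to a single totally minimal nilsystem via a standard inverse-limit argument: $X_\infty$ is the inverse limit of its $k$-step nilfactors $X_k$, and any nonempty open $V \subseteq X_\infty$ contains the preimage $\pi_k^{-1}(V_k)$ of some nonempty open $V_k \subseteq X_k$, so the return-time set in $X_\infty$ for $V$ contains the (smaller) return-time set in $X_k$ for $V_k$. Thus it suffices to treat a totally minimal nilsystem $(N, T) = (G/\Gamma, T)$; moreover $N$ is connected, since otherwise $T$ would cyclically permute finitely many components and some power of $T$ would fail to be minimal. Fix $x \in V$, set $\gamma(n) \defeq (T^{p(n)}x, T^{2p(n)}x, \ldots, T^{dp(n)}x) \in N^d$, and let $Z$ be the closure of the linear orbit $\{(T^m x, T^{2m}x, \ldots, T^{dm}x) : m \in \Z\}$ in $N^d$. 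By Leibman's polynomial equidistribution theorem on connected nilmanifolds (applied to the non-constant $p$), the closure of $\{\gamma(n) : n \in \Z\}$ equals $Z$, and Leibman's representation theorem for polynomial sequences in nilpotent Lie groups provides a minimal nilsystem $(Y, S)$, a continuous surjection $\phi \colon Y \to Z$, and a point $y_0 \in Y$ with $\phi(S^n y_0) = \gamma(n)$ for all $n \in \Z$. Since $(x, \ldots, x) \in Z$ (as the $m=0$ term of the linear orbit) and $(x, \ldots, x) \in V^d$, the set $\phi^{-1}(V^d)$ is a nonempty open subset of $Y$. Consequently
\[
R_S\!\bigl(y_0, \phi^{-1}(V^d)\bigr) = \{n \in \Z : \gamma(n) \in V^d\}
\]
is a visit-time set in a minimal system, hence dynamically syndetic. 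Because $x \in V$ supplies the $i=0$ condition automatically, this set is contained in $R_{(p_0,\ldots,p_d)}(V,\ldots,V)$, completing the proof.

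The main obstacle I anticipate is the set-theoretic lemma that removing a non-piecewise-syndetic set from a dynamically syndetic set leaves a dynamically syndetic set. The natural approach is to prove the dynamical fact that for every minimal system $(Y, S)$, every $y \in Y$, and every non-piecewise-syndetic $N \subseteq \Z$, the closure $\overline{\{S^n y : n \in N\}}$ is nowhere dense in $Y$; one can then shrink any open set witnessing dynamic syndeticity to avoid this nowhere-dense obstruction. A secondary subtlety is in arranging that Leibman's representation yields $\phi(Y) = Z$, which is handled by taking $(Y,S)$ to be the orbit closure of $y_0$ in the ambient nilmanifold, so that $\phi(Y) = \overline{\phi(\{S^n y_0\}_n)} = Z$ by continuity.
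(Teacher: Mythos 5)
Your overall architecture matches the paper's: reduce via \cref{mainthm_ps_structure_theory_pol} and the removal lemma to the pronilfactor, pass to a single (necessarily connected) totally minimal nilsystem factor, and then show the polynomial orbit of a diagonal point is dense in the linear orbit closure, so that the return-time set contains a genuine visit-time set in a larger nilsystem built from Leibman's representation of polynomial sequences. Two steps, however, have genuine gaps.

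First, your proposed justification of the removal lemma is based on a false claim. You assert that for a minimal system $(Y,S)$, a point $y \in Y$, and a non-piecewise-syndetic $N \subseteq \Z$, the closure $\overline{\{S^n y : n \in N\}}$ is nowhere dense. Take $(Y,S)$ to be an irrational rotation by $\alpha$ and $N = \{n^2 : n \in \N\}$: the squares are not piecewise syndetic (their gaps tend to infinity), yet $(n^2\alpha)_{n}$ is equidistributed, so the closure is all of $Y$. The statement you actually need -- if $S$ is dynamically syndetic and $S \setminus B$ is not piecewise syndetic, then $B$ is dynamically syndetic -- is true but nontrivial; the paper's \cref{lemma_pwssetdifference} invokes \cite[Theorem G]{glasscock_le_2025} for the dynamically syndetic case rather than proving it from scratch (the elementary thick-set argument there only handles plain syndeticity).

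Second, your appeal to ``Leibman's polynomial equidistribution theorem'' to conclude $\overline{\{\gamma(n) : n \in \Z\}} = Z$ requires the orbit closure $Z$ of the diagonal point under the \emph{linear} sequence to be connected; connectedness of the ambient nilmanifold $N$ is not enough (this is the hypothesis of \cref{lem:frantzikinakis_connected}). You fix an arbitrary $x \in V$ and never verify this. The paper's \cref{lem:same-mef3} establishes connectedness of the diagonal orbit closure only for Haar-almost every $x$, which suffices because the Haar measure has full support and $V$ is open; you need to insert the same genericity choice. This step cannot be waved away: since $p(0)$ need not be $0$, the containment $(x,\ldots,x) \in \overline{\{\gamma(n) : n \in \Z\}}$ -- which you use to see that $\phi^{-1}(V^d)$ meets the orbit of $y_0$, and hence that the visit-time set is nonempty -- is exactly what the equidistribution statement delivers.
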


Note that \cref{cor:totally_minimal} does not follow from \cref{cor:pol_k_p} by writing $p$ as some other polynomial along an arithmetic progression.  Indeed, not every $p \in \Z[x]$ can be written as $p(n) = q(kn + j)$ for some $q \in \Z[x]$ with $q(0) = 0$.  To see why, note that if $p(n) = q(kn + j)$ holds for all integers $n$, then it holds for all real numbers $n$.  In particular, setting $n = -j/k$, we see that $p(-j/k) = q(0) = 0$, whereby $p$ has a rational root.  Therefore, any integer-coefficient polynomial without rational roots cannot be written in this way.

We prove \cref{cor:totally_minimal} in \cref{sec_proof_of_thm_c} by using \cref{mainthm_ps_structure_theory_pol} to reduce to the case of minimal nilsystems, where we can appeal to the same set of results behind the proofs of Theorems \ref{Mainthm:odd-recurrence-pol} and \ref{cor:pol_k_p}.

\subsubsection{Conjectures of Glasner-Huang-Shao-Weiss-Ye and Leibman}

\cref{cor:pol_k_p} takes a step toward a positive resolution of the following generalization of \cref{thm:odd-recurrence-linear-glasner} conjectured by Glasner, Huang, Shao, Weiss, and Ye. We state it here in an equivalent form.

\begin{Conjecture}
[{\cite[Conjecture 3]{glasner_huang_shao_weiss_ye_2020}}]

\label{conj:polynomial-odd-recurrence_intro}
\label{conj:polynomial-odd-recurrence}
    Let $(X,T)$ be a minimal system and $k, d \in \N$. If $(X,T^k)$ is minimal, then for all polynomials $p_1,\dots,p_d \in \Z[x]$ with $p_i(0)=0$ for $i = 1, \ldots, d$, and all nonempty, open $U \subseteq X$, the set
    \[\big\{ n \in \Z : \ T^{-p_1(n)}U\cap\cdots\cap T^{-p_d(n)}U \neq \emptyset \big\}\]
    has nonempty intersection with all infinite arithmetic progressions of step size $k$.
\end{Conjecture}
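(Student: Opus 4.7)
The plan is to use \cref{mainthm_ps_structure_theory_pol} to reduce \cref{conj:polynomial-odd-recurrence_intro} to its analog on the maximal $\infty$-step pronilfactor, and then to recognize that reduced statement as (a form of) Leibman's conjecture on polynomial orbit closures in nilmanifolds along arithmetic progressions. This plan does not furnish an unconditional proof: it establishes that \cref{conj:polynomial-odd-recurrence_intro} holds if and only if Leibman's conjecture does.

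\emph{Reduction to the pronilfactor via partition regularity.} Let $(X,T)$ be minimal with $(X,T^k)$ minimal, let $\pi \colon X \to X_\infty$ be the factor map to the maximal $\infty$-step pronilfactor, let $\emptyset \neq U \subseteq X$ be open, write $V = (\pi U)^\circ$, and let $A = k\Z + j$. The hypothesis $p_i(0) = 0$ forces the (distinct) polynomials $p_1,\ldots,p_d$ to satisfy $p_i - p_j$ nonconstant for $i \neq j$, so they are essentially distinct and \cref{mainthm_ps_structure_theory_pol} applies to yield that $D \defeq R_p(V,\ldots,V) \setminus R_p(U,\ldots,U)$ is not piecewise syndetic. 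Since $(X_\infty, T^k)$ is a factor of $(X, T^k)$, it too is minimal. Suppose one could establish the \emph{piecewise-syndetic form} of the conjecture on $X_\infty$, namely that $A \cap R_p(V,\ldots,V)$ is piecewise syndetic. Then the decomposition
\[
    A \cap R_p(V,\ldots,V) \;=\; \bigl(A \cap R_p(U,\ldots,U)\bigr) \;\cup\; \bigl(A \cap D\bigr),
\]
combined with the partition regularity of piecewise syndeticity, forces $A \cap R_p(U,\ldots,U)$ to be piecewise syndetic (hence nonempty), because $A \cap D \subseteq D$ is not piecewise syndetic. This is exactly the conclusion of the conjecture.

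\emph{Reduction to nilsystems and the main obstacle.} Writing $X_\infty$ as an inverse limit of minimal finite-step nilsystems $G_s/\Gamma_s$, and using that every nonempty open subset of $X_\infty$ pulls back from a nonempty open subset of some $G_s/\Gamma_s$ and that minimality of $T^k$ is inherited by every factor, the piecewise-syndetic form needed above reduces to the following nilsystem statement: for every minimal nilsystem $(G/\Gamma, g)$ with $(G/\Gamma, g^k)$ minimal, every essentially distinct tuple $p_1,\ldots,p_d \in \Z[x]$ vanishing at $0$, every nonempty open $W \subseteq G/\Gamma$, and every arithmetic progression $A$ of step $k$, the set $A \cap R_p(W,\ldots,W)$ is piecewise syndetic -- equivalently, the orbit closure of the polynomial sequence $(g^{p_1(n)}, \ldots, g^{p_d(n)}) \cdot \Delta$ in $(G/\Gamma)^d$ along $n \in A$ meets $W^d$. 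This nilsystem statement is essentially Leibman's conjecture on polynomial orbit closures along arithmetic progressions, which is open in general. Hence the main obstacle lies not in the reduction but in the resulting nilsystem statement: the novel contribution of this paper is the set-theoretic transfer mechanism of \cref{mainthm_ps_structure_theory_pol}, which pins down the equivalence of \cref{conj:polynomial-odd-recurrence_intro} with Leibman's conjecture and thereby isolates the remaining difficulty to the nilsystem setting.
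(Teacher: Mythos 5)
The statement you were asked about is an open conjecture, and you correctly recognized that no unconditional proof is available: your reduction via \cref{mainthm_ps_structure_theory_pol} (plus partition regularity of piecewise syndeticity, which is the paper's \cref{lemma_pwssetdifference} in disguise), followed by passage through the inverse limit to a nilsystem and identification with Leibman-type orbit-closure conjectures, is essentially the paper's own route to \cref{mainthm_conjecture_are_equiv}. The only elision worth flagging is your ``equivalently'' between piecewise syndeticity of $A \cap R_p(W,\ldots,W)$ and the orbit closure along $A$ meeting $W^d$: upgrading nonemptiness to (piecewise) syndeticity in nilsystems requires Leibman's equidistribution theorem (the paper cites \cite[Corollary 1.9]{Leibman05b}), and the paper also needs \cref{thm:main_nil_pol} to pass from minimal nilsystems with $(X,T^k)$ minimal to the totally minimal setting of \cref{conj:leibmanish}.
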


In light of \cref{mainthm_ps_structure_theory_pol}, to prove \cref{conj:polynomial-odd-recurrence_intro}, it suffices to prove the same statement under the assumption that $(X,T)$ is a nilsystem.
Polynomial orbits in nilsystems have been extensively studied. Most relevant to the present work is a conjecture of Leibman \cite[Conjecture 11.4]{Leibman10b}, which gives an explicit description of the orbit closure of the diagonal $\Delta_{X^d} = \{(x, \ldots, x) : x \in X\}$ in a connected nilsystem $(X = G/\Gamma, a)$ under a polynomial sequence.
The following conjecture, while formally a special case of \cite[Conjecture 11.4]{Leibman10b}, embodies what we believe to be the central idea that Leibman’s conjecture seeks to convey.
Notation for nilsystems and nilrotations is found in \cref{sec_prelims_nil}.

\begin{Conjecture}[See {\cite[Conjecture 11.4]{Leibman10b}}]
\label{conj:leibmanish_intro}
\label{conj:leibmanish}
Let $(X = G/\Gamma, a)$ be a totally minimal nilsystem. Let $p_1, \ldots, p_d \in \Z[x]$ with $p_i(0) = 0$, and let $g(n) = a^{p_1(n)} \otimes \cdots \otimes a^{p_d(n)}$ be the corresponding polynomial sequence in $G^{d}$. The set
\[
    \overline{\{g(n) \Delta_{X^d}: \ n \in \Z\}}
\]
is connected.
\end{Conjecture}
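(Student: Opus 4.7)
The plan is to reduce the connectedness of $Y \defeq \overline{\bigcup_n g(n)\Delta_{X^d}}$ to the connectedness of the pointwise orbit closures $Z_o \defeq \overline{\{g(n)\cdot o : n\in\Z\}}$ for $o\in\Delta_{X^d}$, via the elementary observation that a union of connected sets each meeting a common connected set is connected. Since $g(0)=e$, we have $o\in Z_o$, so $\Delta_{X^d} \subseteq \bigcup_{o\in\Delta_{X^d}} Z_o$ and each $Z_o$ meets the connected set $\Delta_{X^d}$; once each $Z_o$ is shown to be connected, it follows that $\bigcup_o Z_o$ is connected, and hence so is its closure $Y$.

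\textbf{Setup.} Total minimality forces $X$ to be connected (otherwise $a$ cyclically permutes the finitely many components, contradicting minimality of some $a^k$), so we may assume $G$ is connected. After dropping any repeated coordinates, the tuple $(p_1,\ldots,p_d)$ becomes essentially distinct, since $p_i(0)=0$. Let $\pi\colon X \to T\defeq G/([G,G]\Gamma)$ be the maximal toral factor and write $\bar a \defeq \pi(a)$; total minimality descends to $(T,\bar a)$, so that $\chi(\bar a)\in\R/\Z$ is non-torsion, in particular nonzero, for every nontrivial character $\chi\in\widehat T$.

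\textbf{The toral projection is a connected subtorus coset.} Set $\bar g(n) \defeq (p_1(n)\bar a,\ldots,p_d(n)\bar a)\in T^d$. A character $\chi = (\chi_1,\ldots,\chi_d)\in\widehat{T^d}$ annihilates $\overline{\bar g(\Z)}$ precisely when the integer-valued polynomial $P(n)\defeq\sum_i p_i(n)\chi_i(\bar a)$ vanishes in $\R/\Z$ for all $n\in\Z$. Expanding $P$ in the Newton basis $\binom{n}{j}$ and using that $\chi'(\bar a)\neq 0$ for every nontrivial $\chi'\in\widehat T$, the condition is equivalent to the polynomial identity $\sum_i p_i(x)\chi_i = 0$ in $\widehat T[x]$, coefficient by coefficient. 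Since $\widehat T$ is torsion free, the annihilator is a primitive sublattice of $\widehat{T^d}$; combined with the fact that the closure of a polynomial orbit through the origin in a torus is a closed subgroup, this shows $\overline{\bar g(\Z)}$ is a connected subtorus of $T^d$, and by translation $\overline{\bar g(\Z)+t}$ is a connected coset for every $t\in T^d$.

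\textbf{Lifting to $X^d$ and main obstacle.} For each $o\in X^d$, conjugating $g$ by a lift of $o$ realizes $\{g(n)\cdot o : n\in\Z\}$ as a polynomial orbit of the identity coset in $X^d$; Leibman's structure theorem for polynomial orbits on nilmanifolds then identifies $Z_o$ as a sub-nilmanifold of $X^d$ whose connected components biject with those of its image $\pi^d(Z_o)\subseteq T^d$. By the previous paragraph this image is a connected coset, so $Z_o$ is connected, completing the argument. The main obstacle is invoking Leibman's theorem in precisely this form: while the component-bijection is standard for orbits based at the identity, extending to arbitrary base points requires the connectedness of $[G^d,G^d]$ (which follows from $G$ being connected) so that the fibers of $\pi^d$ restricted to $Z_o$ are connected, and care must be taken that the conjugation-by-a-lift operation preserves the correspondence.
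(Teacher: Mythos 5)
First, be aware that the statement you are proving is presented in the paper as an \emph{open conjecture} (Leibman's \cite[Conjecture 11.4]{Leibman10b}); the paper's contribution here (\cref{mainthm_conjecture_are_equiv}) is to show it is equivalent to \cref{conj:polynomial-odd-recurrence_intro}, not to prove it. Your first reduction is sound: since $g(0)$ is the identity, each $Z_o$ meets the connected set $\Delta_{X^d}$, so it would indeed suffice to show $Z_o$ is connected for all $o$ in a dense subset of $\Delta_{X^d}$. Your toral computation is also essentially correct: total minimality makes $\chi(\bar a)$ non-torsion for every nontrivial character $\chi$ of $T$, and Weyl's theorem then shows $\overline{\{\bar g(n):n\in\Z\}}$ is the connected subtorus annihilated by the saturated lattice $\{(\chi_1,\dots,\chi_d):\sum_i p_i\chi_i=0\}$. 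The fatal gap is the lifting step.

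The principle you invoke --- that the connected components of the orbit closure $Z_o$ of a polynomial sequence in a nilmanifold biject with the connected components of its projection to the maximal torus of the \emph{ambient} nilmanifold --- is not a theorem, and it is false in general. In the Heisenberg nilmanifold $G/\Gamma$, take $h(n)=z^n$ where $z$ is the central element whose image in the central circle is $1/2$: then $h(0)=e$, the orbit closure of the identity coset is the two-point set $\{\Gamma,z\Gamma\}$, yet its projection to the maximal torus $G/[G,G]\Gamma$ is a single point. What Leibman and Green--Tao actually prove is an equidistribution criterion for density in the \emph{whole} connected nilmanifold; for a proper orbit closure, the rational part of the sequence (in the Green--Tao factorization) can be periodic mod $\Gamma$ with a period invisible to the horizontal torus, and that is exactly where extra components come from. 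Your fallback --- connectedness of $[G^d,G^d]$ gives connected fibers of $\pi^d\colon X^d\to T^d$ --- does not close this: the relevant fibers are $Z_o\cap(\pi^d)^{-1}(t)$, which can be a disconnected subset of a connected fiber, as in the example above. Establishing connectivity of $Z_o$ for densely many diagonal base points and \emph{general} tuples $(p_1,\dots,p_d)$ is precisely the open content of the conjecture; the paper achieves it only for linear tuples (for almost every $x$, via a spectral theorem of Moreira--Richter, \cref{lem:same-mef3}) and consequently for tuples of the special form $(p,2p,\dots,dp)$ (via \cref{lem:frantzikinakis_connected}).
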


Our main result in this section is that these conjectures of Glasner, Huang, Shao, Weiss and Ye and Leibman are, in fact, equivalent. The proof of \cref{mainthm_conjecture_are_equiv} is completed in \cref{sec:all_equivalent_conjectures}.

\begin{Maintheorem}
\label{mainthm_conjecture_are_equiv}
Conjectures \ref{conj:polynomial-odd-recurrence_intro} and \ref{conj:leibmanish_intro} are equivalent. 
\end{Maintheorem}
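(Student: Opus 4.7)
I would prove the equivalence one implication at a time, in both cases using \cref{mainthm_ps_structure_theory_pol} to reduce matters to nilsystems and then invoking Leibman's structure theory for polynomial orbits on nilmanifolds.

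For \cref{conj:leibmanish_intro} $\Rightarrow$ \cref{conj:polynomial-odd-recurrence_intro}: Given $(X,T)$ minimal with $(X,T^k)$ minimal, infinite arithmetic progressions of step $k$ are syndetic, hence piecewise syndetic. Since piecewise syndeticity is partition regular, \cref{mainthm_ps_structure_theory_pol} reduces the desired conclusion to showing that $R_p\big((\pi U)^\circ,\dots,(\pi U)^\circ\big)$ has piecewise syndetic intersection with every such progression in the maximal $\infty$-step pronilfactor $(X_\infty,T)$. By inverse-limit approximation, I may further assume $(X,T)$ is itself a minimal nilsystem $(G/\Gamma,a)$. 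Its identity component $X_0$ sits in a cycle of $k_0$ components cyclically permuted by $a$, with $\gcd(k,k_0)=1$. After shrinking $U$ into one component and restricting attention to $n$ in the sub-progression $(j+k\Z)\cap k_0\Z$ (nonempty by the Chinese Remainder Theorem), the conditions $p_i(0)=0$ and the periodicity of integer polynomials modulo $k_0$ yield $k_0\mid p_i(n)$; after the substitution $n=k_0 m$ and rescaling $\tilde p_i(m)=p_i(k_0m)/k_0$, the return-time problem becomes one in the connected --- hence totally minimal --- nilsystem $(X_0,a^{k_0})$ along an arithmetic progression of step $k$ in $m$. Applying \cref{conj:leibmanish_intro} to $(X_0,a^{k_0})$ gives that the orbit closure $Z=\overline{\{\tilde g(m)\Delta_{X_0^d}:m\in\Z\}}$ is connected; Leibman's equidistribution theorem then implies that each AP-restricted closure $Z_{j'}=\overline{\{\tilde g(m)\Delta_{X_0^d}:m\in j'+k\Z\}}$ is a sub-nilmanifold of $Z$, and the family $\{Z_{j'}\}_{j'=0}^{k-1}$ consists of cosets of a common sub-nilmanifold $Z''\subseteq Z$. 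The covering $Z=\bigcup_{j'}Z_{j'}$ and connectedness of $Z$ force $Z''=Z$, hence $Z_{j'}=Z$ for every $j'$; equidistribution then delivers the desired syndeticity.

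For \cref{conj:polynomial-odd-recurrence_intro} $\Rightarrow$ \cref{conj:leibmanish_intro}: I argue by contradiction. Suppose $(X=G/\Gamma,a)$ is totally minimal but $Z=\overline{\{g(n)\Delta_{X^d}:n\in\Z\}}$ is disconnected, with connected components $Z_0,\dots,Z_{L-1}$ ($L\geq 2$) and $\Delta_{X^d}\subseteq Z_0$. By Leibman's structure theorem, the map $n\mapsto i(n)$ sending $n$ to the index of the component containing $g(n)\Delta_{X^d}$ is periodic with some period $k\geq 2$; consequently the set $A_0=\{n:g(n)\Delta_{X^d}\in Z_0\}$ is a proper union of residue classes modulo $k$ and misses some progression $j_0+k\Z$. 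Since $Z\setminus Z_0$ is closed and disjoint from the diagonal, I can choose a nonempty open $U\subseteq X$ around some $x_0\in X$ so that $U^d$ avoids $Z\setminus Z_0$; then $R_p(U,\dots,U)\subseteq A_0$ misses $j_0+k\Z$. This contradicts \cref{conj:polynomial-odd-recurrence_intro} applied to $(X,T)$ and the step $k$, which is available because total minimality of $(X,T)$ makes $(X,T^k)$ minimal.

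The main obstacle in both directions is the structural input drawn from Leibman's theory of polynomial orbits on nilmanifolds --- that the AP-restricted orbit closures $Z_{j'}$ are cosets of a common sub-nilmanifold (needed in Direction 1), and that the component map of a polynomial orbit into the finite component set of its orbit closure is periodic (needed in Direction 2). Everything else is routine given \cref{mainthm_ps_structure_theory_pol} and a careful bookkeeping of the cyclic component structure of minimal but not totally minimal nilsystems.
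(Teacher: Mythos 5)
Your proposal is correct in outline and uses the same core ingredients as the paper -- reduction to nilsystems via \cref{mainthm_ps_structure_theory_pol}, the rescaling of polynomials to pass from a minimal nilsystem with $k_0$ components to its connected, totally minimal power (this is exactly the paper's \cref{lem:change_polynomial} and \cref{thm:main_nil_pol}), and Leibman's structure theory for polynomial orbits of subnilmanifolds -- but it is organized differently. The paper does not prove the two implications head-on; it introduces an intermediate statement (\cref{lem:connected_pol_first}, Leibman's conjecture ``along progressions'') and a chain of five equivalent conjectures, deducing \cref{conj:polynomial-odd-recurrence_intro} $\Rightarrow$ \cref{conj:leibmanish_intro} by first restricting to totally minimal nilsystems (\cref{conj:glasner-etal-totally-minimal}), upgrading pointwise recurrence to the containment $\Delta_{X^d}\subseteq\overline{\{g(kn+j)\Delta_{X^d}\}}$ via a Baire category argument, and only then invoking the disconnectedness dichotomy. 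Your direct contradiction argument -- choosing $U$ with $U^d$ disjoint from $Z\setminus Z_0$ so that $R_p(U,\dots,U)$ is trapped in the residue classes whose orbit pieces meet $Z_0$ -- short-circuits the Baire category step and is, if anything, cleaner. The two structural inputs you flag but do not prove are precisely where the paper does real work: the ``cosets of a common sub-nilmanifold'' claim is \cref{lem:leibman_connected_components_of_translate_subnilmanifold} together with \cref{prop:orbit_closure_subnil_arith_same}, and the ``periodicity of the component map'' for orbits of the diagonal (as opposed to orbits of points, where it is literally Leibman's Theorem B) is the content of \cref{lem:disconnected_cannot}, whose proof requires a genericity argument over the subnilmanifold. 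So your sketch is sound, but be aware that these two facts are not off-the-shelf citations for subnilmanifold orbits and account for most of the technical content of the paper's proof.
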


It is interesting to juxtopose the minimality assumption in \cref{conj:polynomial-odd-recurrence} with the total minimality assumption in \cref{conj:leibmanish_intro}. Many recurrence problems related to \cref{conj:polynomial-odd-recurrence_intro} become much simpler when total minimality is assumed. In fact, it is stated explicitly in \cite{glasner_huang_shao_weiss_ye_2020} that \cref{conj:polynomial-odd-recurrence_intro} may be easier if we assume $(X, T)$ is totally minimal. The equivalence of this conjecture and \cref{conj:leibmanish_intro} demonstrates that, unexpectedly, assuming total minimality does not make \cref{conj:polynomial-odd-recurrence_intro} easier.  We make this fact precise below by formulating \cref{conj:glasner-etal-totally-minimal} -- a version of \cref{conj:polynomial-odd-recurrence_intro} for totally minimal nilsystems -- and showing that it, too, is equivalent to \cref{conj:polynomial-odd-recurrence_intro}.

We conclude by remarking that \cref{conj:polynomial-odd-recurrence_intro} is equivalent to its ergodic-theoretic analogue, in which minimal systems and open sets are replaced by ergodic probability measure preserving systems and sets of positive measure.

\begin{Conjecture}
\label{conj_meas_analogue_of_poly_odd_rec}
    Let $(X,\mu,T)$ be an ergodic measure preserving system and $k, d \in \N$. If $(X,\mu,T^k)$ is ergodic, then for all polynomials $p_1,\dots,p_d$ over $\Z$ with $p_i(0)=0$ for $i = 1, \ldots, d$, and all measurable $A \subseteq X$ with $\mu(A) > 0$, the set
    \[\big\{ n \in \Z : \ \mu(T^{-p_1(n)}A\cap\cdots\cap T^{-p_d(n)}A) > 0 \big\}\]
    has nonempty intersection with all infinite arithmetic progressions of step size $k$.
\end{Conjecture}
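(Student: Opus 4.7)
The plan is to prove both implications by reducing each conjecture to a common statement about minimal (equivalently, uniquely ergodic) $\infty$-step pronilsystems, where the topological and measure-theoretic formulations of polynomial recurrence coincide. The topological direction relies on \cref{mainthm_ps_structure_theory_pol}, while the measure-theoretic direction relies on the Host--Kra--Leibman characteristic factor theorem for polynomial multiple ergodic averages.

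For $(\ref{conj_meas_analogue_of_poly_odd_rec})\Rightarrow(\ref{conj:polynomial-odd-recurrence_intro})$: Given minimal $(X,T)$ with $(X,T^k)$ minimal and nonempty open $U\subseteq X$, \cref{mainthm_ps_structure_theory_pol} yields $R_p(U,\dots,U)=R_p((\pi U)^\circ,\dots,(\pi U)^\circ)\setminus N$ with $N\subset\Z$ not piecewise syndetic, where $\pi\colon X\to X_\infty$ is the factor map to the maximal $\infty$-step pronilfactor. Since the complement in $\Z$ of any AP of step $k$ is a finite union of APs (hence piecewise syndetic), it suffices to exhibit a piecewise syndetic subset of $R_p((\pi U)^\circ,\dots,(\pi U)^\circ)\cap(k\Z+j)$ for each $j$. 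The system $(X_\infty,T^k)$ is minimal as a factor of $(X,T^k)$, so its unique invariant measure $\mu_\infty$ is both $T$- and $T^k$-ergodic, and $(\pi U)^\circ$ is nonempty (by topological polynomial recurrence of Bergelson--McCutcheon combined with \cref{mainthm_ps_structure_theory_pol}). Applying \cref{conj_meas_analogue_of_poly_odd_rec} to $(X_\infty,\mu_\infty,T)$ with $A=(\pi U)^\circ$ produces an $n\in k\Z+j$ with $\mu_\infty(\bigcap_i T^{-p_i(n)}(\pi U)^\circ)>0$. Piecewise syndeticity is obtained by invoking the $\ip^*$ property of topological polynomial recurrence in minimal systems (Bergelson--McCutcheon IP-polynomial \Szemeredi{}): $R_p((\pi U)^\circ,\dots,(\pi U)^\circ)$ is $\ip^*$ in $\Z$, so its intersection with $k\Z+j$ is $\ip^*$ in $k\Z+j$ (via restriction of idempotent ultrafilters) and hence piecewise syndetic in $\Z$.

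For $(\ref{conj:polynomial-odd-recurrence_intro})\Rightarrow(\ref{conj_meas_analogue_of_poly_odd_rec})$: Given ergodic $(X,\mu,T)$ with $(X,\mu,T^k)$ ergodic and measurable $A\subseteq X$ with $\mu(A)>0$, the Host--Kra--Leibman structure theorem identifies the maximal $\infty$-step pronilfactor $\pi\colon X\to Z_\infty$ as characteristic for polynomial multiple ergodic averages. The \Cesaro{} average of $\mu(\bigcap_i T^{-p_i(km+j)}A)$ over $m$ equals, up to $o(1)$, its pronilfactor analogue with $\mathbf{1}_A$ replaced by $f=\E[\mathbf{1}_A\mid Z_\infty]$, a nonnegative function with $\int f\,d\mu_\infty>0$. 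Approximate $f$ from below by the indicator of a nonempty open $V\subseteq Z_\infty$ with $\mu_\infty(V)>0$. Since $(X,\mu,T^k)$ is ergodic, $(Z_\infty,\mu_\infty,T^k)$ is ergodic, and by unique ergodicity of minimal pronilsystems, $(Z_\infty,T^k)$ is minimal. Applying \cref{conj:polynomial-odd-recurrence_intro} to $(Z_\infty,T)$ and $V$ gives that $R_p(V,\dots,V)$ intersects every AP of step $k$. Unique ergodicity of minimal pronilsystems, together with Leibman's equidistribution of polynomial orbits in nilsystems, upgrades nonemptiness of an open intersection to positive $\mu_\infty$-measure; this lifts through the characteristic factor to yield positive $\mu$-measure of $\bigcap_i T^{-p_i(n)}A$ for some $n\in k\Z+j$.

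The main obstacle is the topological-to-measure direction, specifically upgrading ``nonempty open intersection'' on the pronilfactor to ``positive-measure intersection of the measurable set $A$'' on $X$ while preserving the AP structure. This requires combining approximation of $A$ by open sets on $Z_\infty$ with equidistribution in nilsystems and the Host--Kra--Leibman characteristic factor theorem, so as to secure positivity along a fixed AP of step $k$ rather than merely on average.
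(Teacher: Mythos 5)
Your overall strategy---reducing both statements to pronilsystems, via \cref{mainthm_ps_structure_theory_pol} on the topological side and Host--Kra--Leibman characteristic factors on the measure-theoretic side---is the same route the paper sketches (the paper only remarks that the equivalence ``can be seen'' this way, channeling everything through \cref{lem:connected_pol_first}). However, both directions as written have gaps. In the direction $(\ref{conj_meas_analogue_of_poly_odd_rec})\Rightarrow(\ref{conj:polynomial-odd-recurrence_intro})$, the step claiming that the $\ip^*$ property of $R_p((\pi U)^\circ,\dots,(\pi U)^\circ)$ makes its intersection with $k\Z+j$ ``$\ip^*$ in $k\Z+j$'' and hence piecewise syndetic does not work: for $j\not\equiv 0\pmod k$ the coset $k\Z+j$ is not an IP set (sums of its elements leave the coset), so $\ip^*$-ness in $\Z$ says nothing about the trace on $k\Z+j$---indeed, if it did, \cref{conj:polynomial-odd-recurrence_intro} would be nearly trivial. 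This direction is salvageable: use the single $n_0\in k\Z+j$ that \cref{conj_meas_analogue_of_poly_odd_rec} supplies as a base point, set $W=\bigcap_iT^{-p_i(n_0)}(\pi U)^\circ\neq\emptyset$, substitute $m\mapsto p_i(n_0+km)-p_i(n_0)$ (which vanish at $0$ and remain essentially distinct, cf.\ \cref{lem_translated_poly_returns}), and apply Bergelson--McCutcheon to $W$ to get that $R_p((\pi U)^\circ,\dots,(\pi U)^\circ)\cap(k\Z+j)$ contains the syndetic set $n_0+k\,R_r(W,\dots,W)$; removing the non-piecewise-syndetic exceptional set from \cref{mainthm_ps_structure_theory_pol} then leaves it nonempty. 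But the argument you actually give is not correct.

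The direction $(\ref{conj:polynomial-odd-recurrence_intro})\Rightarrow(\ref{conj_meas_analogue_of_poly_odd_rec})$ has the more serious gap, which you flag as ``the main obstacle'' without resolving. The step ``approximate $f=\E[\1_A\mid Z_\infty]$ from below by the indicator of a nonempty open $V$'' fails: the level set $\{f>c\}$ is merely measurable and need not contain any nonempty open set (a fat Cantor set already defeats this), so in general there is no open $V$ and $c>0$ with $c\1_V\le f$. Consequently, knowing that $R_p(V,\dots,V)$ meets every progression for all \emph{open} $V$ does not by itself give positivity of $\int\prod_iT^{p_i(km+j)}f\,\d\mu$ for some $m$. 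The actual bridge---and the reason the paper routes the equivalence through \cref{lem:connected_pol_first}---is to first deduce from the topological statement, via the Baire-category argument used in \cref{thm:all_conjectures_equivalent}, that the orbit closure $\overline{\{g(kn+j)\Delta:\ n\in\Z\}}$ contains the diagonal, and then to use Leibman's equidistribution of polynomial orbits on that orbit closure to evaluate the \Cesaro{} limit of the correlation sequence as an integral bounded below in terms of $\int f\,\d\mu>0$ (note that the diagonal typically has measure zero inside the orbit closure, so even this last lower bound requires care). You name these ingredients but do not carry out the argument, and it is precisely the nontrivial content of the equivalence.
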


Although we will not develop this here, the equivalence between Conjectures \ref{conj:polynomial-odd-recurrence_intro} and \ref{conj_meas_analogue_of_poly_odd_rec} can be seen by using the structure theory of measure preserving systems to reduce \cref{conj_meas_analogue_of_poly_odd_rec} from arbitrary ergodic systems to ergodic nilsystems. 
This implies that \cref{conj_meas_analogue_of_poly_odd_rec} is equivalent to \cref{lem:connected_pol_first} below, which we show in \cref{sec:all_equivalent_conjectures} is also equivalent to \cref{conj:polynomial-odd-recurrence_intro}.

\subsection{Outline}

After covering necessary preliminaries in \cref{sec_prelims}, we prove \cref{mainthm_ps_structure_theory_pol} in \cref{sec_poly_ps_structure}.  Applications -- including Theorems \ref{Mainthm:odd-recurrence-pol}, \ref{cor:pol_k_p}, \ref{cor:totally_minimal}, and \ref{mainthm_conjecture_are_equiv} -- are shown in \cref{sec_applications}.  We conclude with some open questions and directions in \cref{sec_further}.

\subsection{Acknowledgments}
The authors gratefully acknowledge the support of the American Institute of Mathematics, who supported this work under an AIM SQuaREs grant.
The second author was supported by the H.F.R.I call ``3rd Call for H.F.R.I.’s Research Projects to Support Faculty Members \& Researchers'' (H.F.R.I. Project Number: 24979).
The fourth author was supported by the EPSRC Frontier Research Guarantee grant EP/Y014030/1.
The fifth author was supported by the Swiss National Science Foundation grant TMSGI2-211214.

\section{Preliminaries}
\label{sec_prelims}

We denote by $\N$ and $\Z$ the set of all positive integers and integers, respectively.

\subsection{Families of subsets of \texorpdfstring{$\Z$}{Z}}
\label{sec_families}

Let $\family$ be a collection of subsets of $\Z$.  The \emph{upward closure} of $\family$, denoted $\uparrow \family$, is the set of all subsets of $\Z$ that contain at least one member of $\family$.  A collection of subsets of $\Z$ is \emph{upward closed} if it is equal to its upward closure.  For $A \subseteq \Z$, we define
\[
    A - \family = \{n \in \Z: \ A - n \in \family\},
\]
that is, the set $A - \family$ consists of those elements of $\Z$ which translate $A$ into $\family$.

Each of the following notions leads to an upward-closed collection of subsets of $\Z$ important to the intersection of Ramsey Theory and topological dynamics.  A set $A \subseteq \Z$ is \dots
\begin{itemize}
    \item \emph{syndetic} if there exists $N \in \N$ such that
    \[
        A \cup (A-1) \cup \cdots \cup (A-N) = \Z;
    \]

    \item \emph{thick} if for all $N \in \N$, there exists $z \in \Z$ such that $\{z, z+1, \ldots, z+N\} \subseteq A$; and

    \item \emph{piecewise syndetic} if it is the intersection of a syndetic set and a thick set, or, equivalently, there exists $N \in \N$ such that the set
    \[
        A \cup (A-1) \cup \cdots \cup (A-N) \text{ is thick}.
    \]
\end{itemize}

The following elementary lemma plays a crucial role in our proof of \cref{mainthm_ps_structure_theory_pol}.

\begin{lemma}
\label{lemma_diff_not_ps}
    Let $A, B \subseteq \Z$ and let $\family$ be an upward-closed family of subsets of $\Z$ with the property that the intersection of all sets in any finite sub-collection of $\family$ is syndetic. If $B \subseteq A \subseteq B - \family$, then $A \setminus B$ is not piecewise syndetic.
\end{lemma}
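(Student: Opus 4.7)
I would argue by contradiction: suppose $A \setminus B$ is piecewise syndetic, and derive a contradiction with the syndetic finite intersection property of $\mathcal F$. By the definition of piecewise syndeticity, there exists $N \in \N$ such that the union $T := \bigcup_{i=0}^N \big((A \setminus B) - i\big)$ is thick. For each $k \in \N$, choose an interval $I_k$ of length $k$ contained in $T$, and for each $x \in I_k$ pick $i_x \in \{0, 1, \ldots, N\}$ with $c_x := x + i_x \in A \setminus B$. Set $F_k := \{c_x : x \in I_k\}$, a finite subset of $A \setminus B \subseteq A$. By the hypothesis on $\mathcal F$, the set $E_k := \bigcap_{c \in F_k}(B - c)$ is syndetic.

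The key observation is a disjointness property: for any $m \in E_k$, we have $m + F_k \subseteq B$ by definition, while $F_k \subseteq A \setminus B$ is disjoint from $B$, so $(m + F_k) \cap F_k = \emptyset$. This is equivalent to $m \notin F_k - F_k$. Thus $E_k$ is a syndetic set disjoint from the difference set $F_k - F_k$.

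The plan is now to turn this disjointness into a contradiction with syndeticity. By pigeonhole on $i_x$, some $i^* \in \{0, \ldots, N\}$ has $X_k := \{x \in I_k : i_x = i^*\}$ of size at least $(k+1)/(N+1)$; for $x, x' \in X_k$ one computes $c_x - c_{x'} = x - x'$, so $X_k - X_k \subseteq F_k - F_k$, which shows $F_k - F_k$ is ``$N$-dense'' in $[-k, k]$, in the sense that every integer in $[-k, k]$ lies within distance $N$ of an element of $F_k - F_k$. The intended way to contradict syndeticity is iterative: since $m + F_k \subseteq B \subseteq A$, the enlarged finite set $F_k \cup (m + F_k)$ again lies in $A$, so the refined intersection $E_k \cap \bigcap_{c \in F_k}(B - m - c)$ remains syndetic by the FIP hypothesis while being disjoint from the strictly larger difference set of $F_k \cup (m + F_k)$. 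Iterating this enrichment should force the accumulated avoidance set to contain an interval of length exceeding the syndeticity constant, yielding the desired contradiction.

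The hardest step will be making the iteration quantitative: a single bounded avoidance set is compatible with syndeticity, so the argument requires a careful bookkeeping showing that iterated differences genuinely exhaust arbitrarily long intervals. A cleaner alternative is an ultrafilter argument: if $p$ is an idempotent ultrafilter containing every set in $\mathcal F$, then $A \subseteq B - \mathcal F$ yields $\{n : B - n \in p\} \supseteq A$, so $A \in p$ would give $B \in p + p = p$, contradicting $A \setminus B \in p$. Producing an idempotent of this form that witnesses the piecewise syndeticity of $A \setminus B$ is precisely where the PS hypothesis on $A\setminus B$ should enter, and this is the step I expect to require the most care.
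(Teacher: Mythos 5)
Your setup (contradiction, the thick union $T$, and the observation that $m\in E_k$ forces $(m+F_k)\cap F_k=\emptyset$ because $m+F_k\subseteq B$ while $F_k\cap B=\emptyset$) is fine, but neither route you sketch from there closes the argument, and both contain steps that fail. For the combinatorial route: (a) the pigeonhole inference is wrong --- a set $X_k\subseteq I_k$ of size $(k+1)/(N+1)$ can be an interval sitting at one end of $I_k$, in which case $X_k-X_k\subseteq[-k/(N+1),k/(N+1)]$ is nowhere near $N$-dense in $[-k,k]$; (b) even a correct density statement would not help, since a syndetic set can avoid, say, all of $(N+1)\Z$, and moreover the syndeticity constant of $E_k$ is not uniform in $k$ (the hypothesis on $\family$ gives no uniform bound as $|F_k|\to\infty$); and (c) the iteration does not preserve the key disjointness: once you adjoin $m+F_k\subseteq B$ to $F_k$, the enlarged set $F_k'=F_k\cup(m+F_k)$ meets $B$, so for $m'$ in the refined intersection you can no longer deduce $(m'+F_k')\cap F_k'=\emptyset$ --- indeed $m'+c=m+c'$ is possible whenever $m'-m\in F_k-F_k$, and nothing excludes this. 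The ultrafilter route is in the same state: the deduction \emph{given} an idempotent $\p$ with $\family\subseteq\p$ and $A\setminus B\in\p$ is correct, but producing such a $\p$ is the entire difficulty ($\family$ is only upward closed with syndetic finite intersections; the set of ultrafilters containing $\family$ need not be a subsemigroup of $\beta\Z$, so there is no reason it contains an idempotent, let alone one also containing $A\setminus B$).

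The idea you are missing, and which the paper uses, is an extremal choice applied to $A$ rather than to $A\setminus B$: with $\ell$ the piecewise-syndeticity constant of $A\setminus B$, let $F$ be a \emph{largest} subset of $A$ contained in an interval of length $\ell$ (the maximum exists since such sets have at most $\ell$ elements), and let $I\supseteq F$ be such an interval. A single application of the hypothesis on $\family$ makes $S:=\bigcap_{f\in F}(B-f)$ syndetic, while $\bigcup_{i\in I}(A\setminus B)-i$ is a shift of your thick set $T$ and hence thick, so the two meet in some $n$. Then $n+F\subseteq B\subseteq A$, and $n+I$ contains a point $x\in A\setminus B$, which cannot lie in $n+F\subseteq B$; hence $(n+I)\cap A\supseteq\{x\}\cup(n+F)$ has more than $|F|$ elements, contradicting maximality. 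This one maximality trick replaces the entire iteration and sidesteps all the uniformity issues above.
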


\begin{proof}
    Suppose, for the sake of a contradiction, that $A\setminus B$ is piecewise syndetic, and let $\ell\in\N$ be such that $T:=\bigcup_{i=1}^\ell (A\setminus B)-i$ is thick.
    Let $F\subseteq A$ be the largest subset of $A$ that is contained in an interval of length $\ell$, and let $I\supseteq F$ be such an interval. 
    Since $A \subseteq B-\family$, we have that $B-f \in \family$ for all $f \in F$.  
    By the assumption on $\family$, the intersection $S:=\bigcap_{f \in F} (B-f)$ is syndetic.
    Since $T$ is thick, so is any of its shifts and hence we can find $n\in S\cap \big(\bigcup_{i\in I}(A\setminus B)-i\big)$.
    From $n\in S$ it follows that $n+F\subseteq B\subseteq A$.
    From $n\in \bigcup_{i\in I}(A\setminus B)-i$ it follows that $n+I$ has an element $x\in A\setminus B$; in particular $x\notin n+F$.
    Therefore, $(n+I)\cap A\supseteq\{x\}\cup(n+F)$ whence it follows that $|(n+I)\cap A|>|F|$, contradicting the construction of $F$.
\end{proof}

The following lemma, when combined with \cref{mainthm_ps_structure_theory_pol}, allows us to lift recurrence from the maximal infinite step pronilfactor of a system. Recall the definition of a dynamically syndetic set from \cref{sec_linear_rec_along_dy_synd_sets}.

\begin{lemma}
\label{lemma_pwssetdifference}
    Let $S, B\subseteq\Z$. If $S$ is (dynamically) syndetic and $S \setminus B$ is not piecewise syndetic, then $B$ is (dynamically) syndetic.
\end{lemma}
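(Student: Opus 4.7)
I would prove the two parallel statements separately, starting with the easier syndetic case. \textbf{Syndetic case:} I argue contrapositively. If $B$ is not syndetic, then $\Z\setminus B$ is thick, so $S\setminus B=S\cap(\Z\setminus B)$ is the intersection of a syndetic set and a thick set, and is therefore piecewise syndetic by definition, contradicting the hypothesis.

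\textbf{Dynamically syndetic case.} Fix a minimal system $(Y,T)$, a point $y_0\in Y$, and a nonempty open set $V_0\subseteq Y$ witnessing dynamical syndeticity of $S$, so that $R_0:=R_T(y_0,V_0)\subseteq S$. Since dynamical syndeticity is upward closed, it suffices to produce a dynamically syndetic subset of $B$. A covering argument analogous to the syndetic case---decompose $R_0=(R_0\cap B)\sqcup(R_0\setminus B)$ and note that, since $R_0\setminus B\subseteq S\setminus B$ is not piecewise syndetic, $\bigcup_{i=0}^N((R_0\setminus B)-i)$ is non-thick for every $N$, forcing the complementary union $\bigcup_{i=0}^N((R_0\cap B)-i)$ to be syndetic---shows that $R_0\cap B$ is syndetic, and in particular piecewise syndetic.

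Next, I pass to the product system $(Y\times\{0,1\}^\Z,T\times\sigma)$, where $\sigma$ is the left shift. Let $[1]:=\{\omega\in\{0,1\}^\Z:\omega(0)=1\}$ and $Z:=\overline{\{(T\times\sigma)^n(y_0,1_B):n\in\Z\}}$; by construction, $R_0\cap B$ is exactly the return-time set $R_{T\times\sigma}((y_0,1_B),V_0\times[1])$ in $Z$. A classical principle in topological dynamics---piecewise syndeticity of a return-time set is equivalent to the open target containing a uniformly recurrent point of the orbit closure---produces a uniformly recurrent $(y^*,\omega^*)\in Z\cap(V_0\times[1])$; the minimal orbit closure $M\subseteq Z$ through $(y^*,\omega^*)$ then furnishes the dynamically syndetic set
\[D:=R_{T\times\sigma}((y^*,\omega^*),V_0\times[1])\subseteq\{n\in\Z:\omega^*(n)=1\}.\]

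The main obstacle, which I expect to be the hardest step, is to pass from $D$ to a dynamically syndetic subset of $B$ itself, since the inclusion $D\subseteq B$ can fail because $\omega^*\in\overline{\{\sigma^n 1_B:n\in\Z\}}$ need not equal $1_B$. The plan is to invoke the Auslander--Ellis theorem to obtain a proximality sequence $n_k\to\infty$ with $(T\times\sigma)^{n_k}(y_0,1_B)\to(y^*,\omega^*)$, whence, by continuity and openness of $V_0\times[1]$, every finite $F\subseteq D$ satisfies $F+n_k\subseteq R_0\cap B\subseteq B$ for all sufficiently large $k$. A careful extraction via a minimal idempotent in the Ellis semigroup of $(\{0,1\}^\Z,\sigma)$ acting on $1_B$---equivalently, picking a uniformly recurrent $\omega^{**}\in\overline{\{\sigma^n 1_B:n\in\Z\}}$ that dominates $1_D$ along a well-chosen shift---then yields an $m\in\Z$ for which $\{n:\omega^{**}(n)=1\}-m\subseteq B$. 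This shifted return-time set is dynamically syndetic (via the minimal subshift $\overline{\{\sigma^n\omega^{**}:n\in\Z\}}$) and contained in $B$, completing the proof.
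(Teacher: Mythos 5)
Your syndetic case is correct and is essentially the paper's own argument in contrapositive form: the paper shows directly that $B$ meets every thick set $T$ because $B\cap T\supseteq (S\cap T)\setminus(S\setminus B)$, while you observe that if $B$ were not syndetic then $\Z\setminus B$ would be thick and $S\setminus B=S\cap(\Z\setminus B)$ would be piecewise syndetic by definition. Either route is fine. For the dynamically syndetic case, however, note that the paper does not supply a proof at all --- it cites \cite[Theorem~G]{glasscock_le_2025} --- so you are attempting something genuinely harder than what the paper records, and the attempt has a real gap.

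Your reduction to the syndeticity of $R_0\cap B$ is fine (it is just the syndetic case applied with $R_0$ in place of $S$), and the passage to $(Y\times\{0,1\}^\Z,\,T\times\sigma)$ is a sensible start. Two problems then arise. First, the ``classical principle'' you invoke is only valid for the closure of the target (or for clopen targets such as $[1]$): piecewise syndeticity of a return-time set to an open $U$ produces a uniformly recurrent point of the orbit closure in $\overline{U}$, not in $U$ itself; since $V_0\times[1]$ is not clopen in the $Y$-coordinate, you would need to first shrink $V_0$ to some $V_0'$ with $\overline{V_0'}\subseteq V_0$. That is repairable. The fatal gap is the final step, which you yourself flag as the hardest: the claim that some uniformly recurrent $\omega^{**}$ in the orbit closure of $1_B$ admits a single $m$ with $\{n:\omega^{**}(n)=1\}-m\subseteq B$. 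For a general $B$ this is false: membership of $\omega^{**}$ in the orbit closure of $1_B$ (and likewise proximality via Auslander--Ellis) only guarantees that every \emph{finite} subset of $\{\omega^{**}=1\}$ has some translate inside $B$, and one can build piecewise syndetic sets $B$ --- long, widely separated chunks of a Sturmian word, say --- for which no translate of the full support lies in $B$. So your argument, as written, only shows that $B$ contains translates of arbitrarily long finite pieces of a dynamically syndetic set, a ``piecewise'' version of the desired conclusion. To close the gap, the hypothesis that $S\setminus B$ is not piecewise syndetic must be exploited again in this last step (not only in establishing syndeticity of $R_0\cap B$), and you give no indication of how; that is precisely the content of the cited Theorem~G and it does not follow from the tools you list.
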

\begin{proof}
    Suppose $S$ is syndetic and $S \setminus B$ is not piecewise syndetic. To show $B$ is syndetic, we will show that $B$ has nonempty intersection with every thick set.
    Let $T\subseteq\Z$ be thick.
    Since $B\cap T\supseteq(S\cap T)\setminus (S\setminus B)$ and $S\cap T$ is piecewise syndetic (but $S\setminus B$ is not), we deduce that $B\cap T\neq\emptyset$, as desired.  

    If $S$ is dynamically syndetic, the conclusion of the lemma follows from \cite[Theorem G]{glasscock_le_2025}.
\end{proof}

\subsection{Topological dynamics}
\label{sec_prelims_top_dyn}

An \emph{(invertible) topological dynamical system} is a compact metric space $X$ together with a homeomorphism $T: X \to X$. We frequently refer to invertible topological dynamical systems simply as \emph{systems} in this paper.  A set $W \subseteq X$ is \emph{$T$-invariant} if $T^n W \subseteq W$ for all $n \in \Z$.  If $W \subseteq X$ is nonempty, closed, and $T$-invariant, the system $(W,T)$ is called a \emph{subsystem} of $(X,T)$.

Given a subset $V\subseteq X$, the set $\orb_T(V)=\overline{\bigcup_{n\in\Z}\{T^nx:x\in V\}}$ is the \emph{orbit closure of $V$ under $T$}. When $V=\{x\}$ is a singleton, then we write $\orb_T(x)$ instead of $\orb_T(\{x\})$ and refer to it as the \emph{orbit closure of $x$ under $T$}. A system $(X,T)$ is \emph{transitive} if at least one point has a dense orbit (equivalently, for all nonempty, open $U, V \subseteq X$, there exists $n \in \Z$ such that $U \cap T^{-n} V \neq \emptyset$); \emph{minimal} if every point has a dense orbit; and \emph{totally minimal} if for all $n \in \N$, the system $(X,T^n)$ is minimal.

A \emph{factor} of a system $(X,T)$ is another system $(Y,S)$ together with a continuous surjection $\pi\colon X\to Y$, called the \emph{factor map} from $(X,T)$ onto $(Y,S)$, such that
\[
\pi\circ T=S\circ \pi.
\]
If $(Y,S)$ and $(Z,R)$ are two factors of $(X,T)$ with corresponding factor maps $\pi\colon X\to Y$ and $\eta\colon X\to Z$, then we say that $(Y,S)$ \emph{contains} $(Z,R)$ if there exists a factor map $\psi \colon Y \to Z$ with $\eta = \psi \circ \pi$.
When $(Y,S)$ is a factor of $(X,T)$, we equivalently call $(X,T)$ an \emph{extension} of $(Y,S)$.  An extension $\pi: X \to Y$ is \emph{almost 1--1} if there exists a residual set $\Omega \subseteq X$ such that for all $x \in \Omega$, $\pi^{-1}( \pi x) = \{x\}$.

A map $\phi : X \to Y$ between two topological spaces $X$ and $Y$ is \emph{semiopen} if for all nonempty, open $U \subseteq X$, the set $\phi U$ has nonempty interior. We use the following fact several times throughout the paper; the proof is contained in the proof of \cite[Lemma 2.9]{glasscock_koutsogiannis_richter_2019}.

\begin{lemma}
\label{lemma_semiopen}
    A factor map between two minimal systems is semiopen.
\end{lemma}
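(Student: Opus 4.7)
The plan is to give a standard Baire category argument, leveraging minimality of $(X,T)$ to cover $X$ by finitely many translates of a compact ``slightly smaller'' version of $U$, and then minimality of $(Y,S)$ (implicit in the fact that $Y$ is a compact metric, hence Baire, space) to conclude that one of the translates of $\pi(U)$ contains an interior point.

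More concretely, given a nonempty open $U \subseteq X$, I would first shrink inside $U$: pick $x_0 \in U$ and $r > 0$ so small that the open ball $V \defeq B(x_0, r)$ satisfies $K \defeq \overline{V} \subseteq U$. The point of passing to $K$ is that $\pi(K)$ is then \emph{closed} in $Y$ (continuous image of a compact set), and it is still contained in $\pi(U)$, so it suffices to show $\pi(K)$ has nonempty interior.

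Next, by minimality of $(X,T)$, the family $\{T^n V : n \in \Z\}$ is an open cover of the compact space $X$, so there exist $n_1, \ldots, n_N \in \Z$ with
\[
    X = \bigcup_{i=1}^N T^{n_i} V \subseteq \bigcup_{i=1}^N T^{n_i} K.
\]
Applying the factor map $\pi$ and using $\pi \circ T = S \circ \pi$ together with surjectivity of $\pi$ gives
\[
    Y = \pi(X) = \bigcup_{i=1}^N S^{n_i} \pi(K).
\]
Each set on the right is closed. Since $Y$ is a compact metric space, hence Baire, at least one $S^{n_i} \pi(K)$ has nonempty interior; as $S$ is a homeomorphism, this forces $\pi(K)$ itself to have nonempty interior, and therefore so does the larger set $\pi(U)$.

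There is no real obstacle here: the argument is essentially a one-line application of the Baire category theorem once one has the foresight to replace the open set $U$ by the compact subset $K$ to ensure the translates $S^{n_i}\pi(K)$ are closed (so Baire applies). Minimality of $(Y,S)$ is not used directly — only minimality of $(X,T)$ — which matches the hypothesis as stated.
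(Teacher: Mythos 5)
Your argument is correct and is the standard Baire category proof of this fact; the paper itself does not reproduce a proof but defers to \cite[Lemma 2.9]{glasscock_koutsogiannis_richter_2019}, where essentially this same argument (cover $X$ by finitely many translates of a compact subset of $U$ using minimality, push forward, apply Baire to the resulting closed cover of $Y$) is carried out. Your observations that minimality is only needed for $(X,T)$ and that one must shrink $U$ to a compact $K$ so that the translates of $\pi(K)$ are closed are exactly the right points of care.
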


A system $(X,T)$ is \emph{equicontinuous} if the family of maps $\{T^n : n \in \Z\}$ is equicontinuous.  Every system $(X,T)$ has a \emph{maximal equicontinuous factor} (see, eg., \cite[Ch. 9]{Auslander88}); it is an equicontinuous system that is maximal in the sense that it contains every equicontinuous factor of $(X,T)$.

A number $\theta \in \R/\Z$ is an \emph{eigenvalue} of the system $(X,T)$ if there exists a continuous function $f: X \to \C$ for which $f \circ T = e^{2\pi i \theta}f$.  
Note that $0$ is an eigenvalue of all systems and hence it is called \emph{trivial}. Any other eigenvalue is called \emph{non-trivial}.
In a transitive system, every eigenfunction $f\in C(X)$ has a constant absolute value.
When $\theta$ is irrational, the function $f$ can be seen (after normalizing) as a factor map $f:X\to S^1$ to the unit circle under rotation by angle $2\pi\theta$. As this is an equicontinuous factor of $(X,T)$, it is contained in the maximal equicontinuous factor, and thus the set of eigenvalues of a transitive system coincides with those of its maximal equicontinuous factor.
The same is true when $\theta$ is rational if one replaces $S^1$ with one if its closed subgroups.

\subsection{Nilsystems and pro-nilsystems}
\label{sec_prelims_nil}

A \emph{$k$-step nilsystem} is a system $(G/\Gamma,T)$ where $G$ is a $k$-step nilpotent Lie group, $\Gamma\leq G$ is a discrete cocompact subgroup, and $T:x\mapsto ax$ is left translation on $X$ by some fixed $a\in G$.
There is a unique Borel probability on $G/\Gamma$ call the \emph{Haar measure} that is invariant under the action of $G$ on $G/\Gamma$. Every orbit closure in a $k$-step nilsystem is uniquely ergodic, and $k$-step nilsystems are always distal. For these and other standard facts about nilsystems, we refer to \cite{Host_Kra18}.

An inverse limit of $k$-step nilsystems is called a \emph{$k$-step pronilsystem}. Every minimal system $(X,T)$ admits a unique factor that is a $k$-step pronilsystem and contains all $k$-step nilfactors of $(X,T)$ as a factor (see~\cite[Chapter 17]{Host_Kra18}). It is called the \emph{maximal $k$‑step pronilfactor} of $(X,T)$, and we denote it by $(X_k,T)$. Note that $(X_1,T)$ is the maximal equicontinuous factor of $(X,T)$.  As $k$ tends to infinity, the maximal $k$‑step pronilfactors of $(X,T)$ form an increasing tower of factors. The inverse limit
\[
(X_\infty,T):=\varprojlim_{k\to\infty} (X_k,T)
\]
is the \emph{maximal $\infty$‑step pronilfactor} of $(X,T)$.

\begin{remark}
\label{rmk_open_set_in_an_inverse_limit}
As an inverse limit, the topology on $X_\infty$ is determined by the associated projection maps $\pi_k : X_\infty \to X_k$. Thus, if $U \subseteq X_\infty$ is open, there is $k \in \N$ and $V \subseteq X_k$ open with $\pi_k^{-1}(V) \subseteq U$.
\end{remark}

We will encounter polynomial sequences is nilpotent Lie groups.  Generally, a \emph{polynomial sequence} in a group $G$ is any sequence of the form
\[
g(n) = a_1^{p_1(n)} \cdots a_d^{p_d(n)}
\]
where $a_1,\dots,a_d$ are fixed elements of $G$ and $p_1,\dots,p_d$ are fixed polynomials in $\Z[x]$.

\section{A structure theorem for return-time sets}
\label{sec_poly_ps_structure}

Throughout this section -- the goal of which is to prove \cref{mainthm_ps_structure_theory_pol} -- and the next, it will be useful to recall the definition of the set $R_p(U_1,\ldots,U_d)$ from \eqref{eqn_main_rp_notation} in the introduction.  Recall also that a tuple of polynomials $p = (p_1, \ldots, p_d) \in \Z[x]^d$ is essentially distinct if for all $1 \leq i < j \leq d$, the polynomial $p_i - p_j$ is non-constant.  We will need to consider the following subfamily of essentially distinct tuples:
\[\Pol_d:=\Big\{(p_1,\dots,p_d)\in\Z[x]^{d}: \ p_i(0)=0, \ p_i\neq p_j\Big\}.\]

\subsection{Basics}

The following lemmas will be used several times in this section.
\begin{lemma}
\label{lem_translated_poly_returns}
    Let $(X,T)$ be a system, $U_1, \ldots, U_d \subseteq X$ be nonempty and open, $p=(p_1,\dots,p_d) \in \Z[x]^d$ be essentially distinct, and $a \in \Z$.  For each $i \in \{1, \ldots, d\}$, define $q_i(n) =  p_i(n + a) - p_i(a)$.  The tuple $q:=(q_1,\dots,q_d) \in \Pol_d$ and
    \[
        R_q(T^{-p_1(a)}U_1, \ldots, T^{-p_d(a)}U_d) = R_p(U_1, \ldots, U_d) - a.
    \]
\end{lemma}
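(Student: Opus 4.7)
The plan is to prove both assertions by a direct unraveling of definitions, with essential distinctness playing the role of keeping the $q_i$ pairwise distinct.

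First I would verify that $q \in \Pol_d$. Evaluating at zero gives $q_i(0) = p_i(0 + a) - p_i(a) = 0$ for each $i$, which handles the constant-term condition. For the distinctness condition, I would argue by contraposition: if $q_i = q_j$ for some $i \ne j$, then for every $n \in \Z$,
\[
    p_i(n + a) - p_j(n + a) = q_i(n) - q_j(n) + p_i(a) - p_j(a) = p_i(a) - p_j(a),
\]
so $p_i - p_j$ would be a constant polynomial, contradicting the essential distinctness of $p$. (In fact this shows more: $q$ is itself essentially distinct, since $q_i - q_j$ differs from $p_i - p_j$ only by a translation in the argument, so non-constancy is preserved.) Hence $q \in \Pol_d$.

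Next I would establish the equality of return-time sets by chasing the definitions. Fix $n \in \Z$. By definition, $n$ lies in $R_q(T^{-p_1(a)}U_1, \ldots, T^{-p_d(a)}U_d)$ if and only if
\[
    T^{-q_1(n)}\bigl(T^{-p_1(a)}U_1\bigr) \cap \cdots \cap T^{-q_d(n)}\bigl(T^{-p_d(a)}U_d\bigr) \neq \emptyset.
\]
Using that the $T^{-m}$ commute and that $q_i(n) + p_i(a) = p_i(n+a)$ by the very definition of $q_i$, the intersection above equals
\[
    T^{-p_1(n+a)}U_1 \cap \cdots \cap T^{-p_d(n+a)}U_d,
\]
so the condition is equivalent to $n + a \in R_p(U_1, \ldots, U_d)$, i.e.\ $n \in R_p(U_1, \ldots, U_d) - a$.

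There is no real obstacle here; the only subtlety is remembering to use essential distinctness of $p$ to conclude that $q_i \ne q_j$, since $\Pol_d$ was defined to require $p_i \ne p_j$ rather than the stronger essential distinctness. This lemma is purely formal bookkeeping, and I expect its role in the sequel will simply be to let us translate a return-time problem centered at an arbitrary integer $a$ into one where the polynomials satisfy the normalization $p_i(0) = 0$.
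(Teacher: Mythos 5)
Your proof is correct and follows essentially the same route as the paper's: check $q_i(0)=0$, deduce $q_i\neq q_j$ from the non-constancy of $p_i-p_j$, and then chase the definition of the return-time sets via the identity $q_i(n)+p_i(a)=p_i(n+a)$. Your parenthetical observation that $q$ is in fact essentially distinct (not merely pairwise distinct) is a small bonus beyond what the paper records, but nothing more needs to be said.
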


\begin{proof}
Clearly $q_i(0) = 0$, so to show that $q \in \Pol_d$, we need only to show that $q_i \neq q_j$ for $i \neq j$.  
If $q_i=q_j$, then $n \mapsto p_i(n+a) - p_j(n+a)$ is constant. 
This implies that $p_i-p_j$ is constant, a contradiction if $i \neq j$.

The equivalences
\begin{eqnarray*}
n \in R_p(U_1, \ldots, U_d)-a 
 &\Longleftrightarrow& n+a \in R_p(U_1, \ldots, U_d)\\
 &\Longleftrightarrow& T^{-p_1(n+a)}U_1\cap\ldots\cap T^{-p_d(n+a)}U_d\neq \emptyset\\
 &\Longleftrightarrow& T^{-q_1(n)}(T^{-p_1(a)} U_1)\cap\ldots\cap T^{-q_d(n)}(T^{-p_d(a)} U_d)\neq \emptyset\\
 &\Longleftrightarrow& n\in R_q(T^{-p_1(a)}U_1,\ldots, T^{-p_d(a)}U_d)
\end{eqnarray*}
imply the second conclusion.
\end{proof}

\begin{lemma}
\label{lemma_containment_in_one_direction}
    Let $(X,T)$ be a minimal system and $\pi: (X,T) \to (Y,T)$ be a factor map.  For all nonempty, open $U_1, \ldots, U_d \subseteq X$ and all $p \in \Z[x]^d$,
    \[R_p(U_1,\ldots,U_d) \subseteq R_p\big((\pi U_1)^\circ, \ldots, (\pi U_d)^\circ\big).\]
\end{lemma}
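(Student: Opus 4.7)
The plan is to use semi-openness of the factor map, together with the fact that the preimage hypothesis gives us a nonempty open set in $X$ whose image under $\pi$ necessarily has nonempty interior.

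Concretely, given $n \in R_p(U_1,\ldots,U_d)$, I would let
\[
    V \defeq \bigcap_{i=1}^d T^{-p_i(n)} U_i,
\]
which is open (as a finite intersection of open sets) and nonempty by the definition of $R_p(U_1,\ldots,U_d)$. Since $(X,T)$ is minimal, the factor $(Y,T)$ is also minimal, so by \cref{lemma_semiopen} the factor map $\pi$ is semiopen, meaning $(\pi V)^\circ \neq \emptyset$.

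Next, I would observe that because $\pi$ intertwines $T$, the identity $\pi(T^k W) = T^k (\pi W)$ holds for every $k \in \Z$ and every $W \subseteq X$. Applying this to the inclusion $V \subseteq T^{-p_i(n)} U_i$ for each $i$ yields
\[
    \pi V \subseteq \bigcap_{i=1}^d T^{-p_i(n)} \pi U_i.
\]
Taking interiors on both sides, using that $T^{p_i(n)}$ is a homeomorphism (so $T^{-p_i(n)}(\pi U_i)^\circ = (T^{-p_i(n)} \pi U_i)^\circ$) and that the interior distributes over finite intersections, gives
\[
    \emptyset \neq (\pi V)^\circ \subseteq \bigcap_{i=1}^d T^{-p_i(n)} (\pi U_i)^\circ.
\]
This exactly says $n \in R_p\big((\pi U_1)^\circ, \ldots, (\pi U_d)^\circ\big)$, completing the inclusion.

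There is no real obstacle here; the only delicate point is remembering that we need the interiors $(\pi U_i)^\circ$, not merely the images $\pi U_i$, to be hit by the orbit. Semi-openness of $\pi$ (guaranteed by minimality of $(X,T)$ via \cref{lemma_semiopen}) is what upgrades a bare intersection point in $X$ to an open set of intersection points in $Y$, so that the interiors are nonempty and the argument goes through.
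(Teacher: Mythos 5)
Your proof is correct and follows essentially the same route as the paper's: both arguments apply semi-openness of $\pi$ (via \cref{lemma_semiopen}) to the nonempty open set $\bigcap_i T^{-p_i(n)}U_i$, push the inclusion through $\pi$ using equivariance, and then pass to interiors using that $T$ is a homeomorphism and that interiors commute with finite intersections. No gaps.
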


\begin{proof}
    Let $U_1, \ldots, U_d \subseteq X$ be nonempty and open, $p = (p_1, \ldots, p_d) \in \Z[x]^d$, and $n \in R_p(U_1, \ldots, U_d)$.  The set $T^{-p_1(n)}U_1 \cap \cdots \cap T^{-p_d(n)}U_d$ is open and nonempty.
    Since $\pi$ is semiopen (\cref{lemma_semiopen}) and commutes with $T^{-1}$, the set
    \begin{align*}
        T^{-p_1(n)} \pi U_1 \cap \cdots \cap T^{-p_d(n)}\pi U_d
        \supseteq \pi \big( T^{-p_1(n)}U_1 \cap \cdots \cap T^{-p_d(n)}U_d \big)
    \end{align*}
    has nonempty interior.  It follows that
    \[
        (T^{-p_1(n)} \pi U_1)^\circ \cap \cdots \cap (T^{-p_d(n)}\pi U_d)^\circ = (T^{-p_1(n)} \pi U_1 \cap \cdots \cap T^{-p_d(n)}\pi U_d)^\circ \neq \emptyset.
    \]
    Since $T$ is a homeomorphism,
    \[
        \big(T^{-p_i(n)} \pi U_i \big)^{\circ} = T^{-p_i(n)} \big( \pi U_i \big)^{\circ}.
    \]
    As a result,
    \[
        T^{-p_1(n)}\big(\pi U_1 \big)^\circ \cap \cdots \cap T^{-p_d(n)}\big(\pi U_d \big)^\circ \neq \emptyset,
    \]
    whereby $n \in R_p\big( (\pi U_1)^\circ, \ldots, (\pi U_d)^\circ \big)$, as was to be shown.
\end{proof}

\subsection{Reduction to open extensions}
\label{sec_reduction_to_open}

We will show in this subsection that in order to prove \cref{mainthm_ps_structure_theory_pol}, it suffices to prove it under the assumption that the factor map $\pi$ is open. First, we need a lemma.

\begin{lemma}
\label{lemma_multiple_set_returns_lift_in_almost_11_extensions}
Let $\pi: (X,T) \to (Y,T)$ be an almost 1--1 extension of minimal systems.  For all nonempty, open $U_1, \ldots, U_d \subseteq X$ and all tuples of polynomials $p \in \Z[x]^d$,
\[
R_p(U_1,\ldots,U_d) = R_p\big((\pi U_1)^\circ, \ldots, (\pi U_d)^\circ\big).
\]
\end{lemma}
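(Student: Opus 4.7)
The forward inclusion $R_p(U_1, \ldots, U_d) \subseteq R_p((\pi U_1)^\circ, \ldots, (\pi U_d)^\circ)$ is already \cref{lemma_containment_in_one_direction}, so the plan is to establish the reverse inclusion. Given $n \in R_p((\pi U_1)^\circ, \ldots, (\pi U_d)^\circ)$, I would set $W := \bigcap_{i=1}^d T^{-p_i(n)} (\pi U_i)^\circ$, a nonempty open subset of $Y$, and aim to produce a point $x \in X$ with $T^{p_i(n)} x \in U_i$ for every $i$.

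The central tool I would invoke is the fact that the singleton-fiber set $Y_0 := \{y \in Y : |\pi^{-1}(y)| = 1\}$ is a dense $G_\delta$ subset of $Y$. I would show it is $G_\delta$ by noting that the fiber map $y \mapsto \pi^{-1}(y)$ is upper semi-continuous into the compact subsets of $X$ (in the Hausdorff metric), so each set $\{y : \mathrm{diam}(\pi^{-1}(y)) < 1/k\}$ is open in $Y$, and $Y_0$ is their countable intersection over $k \in \N$. For density I would use the inclusion $\pi(\Omega) \subseteq Y_0$, where $\Omega$ is the residual subset of $X$ furnished by the almost 1--1 hypothesis, together with the observation that $\pi^{-1}(V) \cap \Omega \neq \emptyset$ for every nonempty open $V \subseteq Y$ (since $\pi^{-1}(V)$ is nonempty and open in $X$, and $\Omega$ is residual).

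With residuality of $Y_0$ in hand, each $T^{-p_i(n)} Y_0$ is also residual in $Y$ (as $T$ is a homeomorphism), so by the Baire category theorem the intersection $W \cap \bigcap_{i=1}^d T^{-p_i(n)} Y_0$ is nonempty. Picking $y$ in this intersection, for each $i$ we get $T^{p_i(n)} y \in (\pi U_i)^\circ \cap Y_0$, so there is a unique $x_i \in X$ with $\pi(x_i) = T^{p_i(n)} y$, and this $x_i$ must lie in $U_i$ because $T^{p_i(n)} y \in \pi U_i$ forces some preimage in $U_i$ and uniqueness identifies it with $x_i$. Any $x \in \pi^{-1}(y)$ then satisfies $\pi(T^{p_i(n)} x) = T^{p_i(n)} y$, and the uniqueness of the preimage forces $T^{p_i(n)} x = x_i \in U_i$ for every $i$; hence $n \in R_p(U_1, \ldots, U_d)$.

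The only genuine obstacle I anticipate is the residuality of $Y_0$, but this is standard folklore for almost 1--1 extensions and the short argument above suffices. Once that fact is in place, the remainder is a clean Baire-category selection combined with uniqueness of preimages, which together lift a $Y$-level return time to an $X$-level one.
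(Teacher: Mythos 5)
Your proof is correct and follows essentially the same route as the paper: both arguments rest on the density of singleton fibers (which you establish explicitly via upper semi-continuity and Baire category, a detail the paper leaves implicit) and then lift a $Y$-level return time by uniqueness of preimages. The only cosmetic difference is that the paper selects a single point $y$ with singleton fiber inside $T^{-p_1(n)}\pi U_1\cap\cdots\cap T^{-p_d(n)}\pi U_d$ and lifts from there, whereas you arrange via a Baire-category intersection that every $T^{p_i(n)}y$ has a singleton fiber — slightly more than is needed, but equally valid.
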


\begin{proof}
    The containment
    \[R_p(U_1,\ldots,U_d) \subseteq R_p\big((\pi U_1)^\circ, \ldots, (\pi U_d)^\circ\big)\]
    follows from \cref{lemma_containment_in_one_direction}. To see the reverse containment, suppose that $n \in R_p ( (\pi U_1)^\circ, \ldots, (\pi U_d)^\circ)$ so that
    \[\emptyset \neq T^{-p_1(n)} (\pi U_1)^\circ \cap \cdots \cap T^{-p_d(n)} (\pi U_d)^\circ = (T^{-p_1(n)} \pi U_1)^\circ \cap \cdots \cap (T^{-p_d(n)} \pi U_d)^\circ.\]
    It follows that the set $T^{-p_1(n)} \pi U_1 \cap \cdots \cap T^{-p_d(n)} \pi U_d$ has nonempty interior.  Since $\pi$ is almost 1--1, there exists  $y \in T^{-p_1(n)}\pi U_1 \cap \cdots \cap T^{-p_d(n)}\pi U_d$ such that $\pi^{-1}\{y\}$ is a singleton, say $\{x\}$.  It follows that $x \in T^{-p_1(n)}U_1 \cap \cdots \cap T^{-p_d(n)}U_d$, whereby $n \in R_p(U_1,\ldots,U_d)$, as was to be shown.
\end{proof}

The following theorem, which we prove in \cref{sec:proof_a_open}, is a special case of \cref{mainthm_ps_structure_theory_pol} in which the factor map $\pi$ is open and the polynomials are assumed to have no constant terms.

\begin{theorem}
\label{thm_ps_structure_theory_open}
\label{conj_ps_structure_theory_open}
    Let $(X,T)$ be a minimal system, and let $\pi: (X,T) \to (Y,T)$ be a factor of $X$. Suppose that $\pi$ is open and that $Y$ contains the infinite step pronilfactor of $X$. For all nonempty, open $U_1, \ldots, U_d \subseteq X$ and all $p \in \Pol_d$, the set
    \[
        R_p(\pi U_1, \ldots, \pi U_d) \setminus R_p(U_1, \ldots, U_d)
    \]
    is not piecewise syndetic.
\end{theorem}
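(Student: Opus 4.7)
The plan is to invoke \cref{lemma_diff_not_ps} with $\family$ taken to be the family of $\ip^*$ subsets of $\Z$. This family is upward closed, and since $\ip^*$ sets form a filter of syndetic sets, any finite intersection of members of $\family$ is syndetic. Setting $A \defeq R_p(\pi U_1, \ldots, \pi U_d)$ and $B \defeq R_p(U_1, \ldots, U_d)$, \cref{lemma_containment_in_one_direction} yields $B \subseteq A$, so it suffices to show that $B - n$ is $\ip^*$ for every $n \in A$.

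Fix $n \in A$. By \cref{lem_translated_poly_returns}, one has $B - n = R_q(W_1, \ldots, W_d)$, where $W_i \defeq T^{-p_i(n)} U_i$ are nonempty open subsets of $X$ and $q \in \Pol_d$. Since $\pi$ is open and $n \in A$, the set $V \defeq \bigcap_i \pi W_i$ is a nonempty open subset of $Y$. Applying the Bergelson--McCutcheon $\ip$ Polynomial Szemer\'edi theorem to the minimal system $(Y, T)$, the open set $V$, and the polynomials $q \in \Pol_d$, we conclude that $R_q(V, V, \ldots, V)$ is $\ip^*$. Since $V \subseteq \pi W_i$ for each $i$, the larger set $R_q(\pi W_1, \ldots, \pi W_d)$ is also $\ip^*$.

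It remains to transfer this $\ip^*$-property from $Y$ back to $X$, i.e., to show that $R_q(W_1, \ldots, W_d)$ is $\ip^*$. Since $X_\infty$ is a factor of $Y$ by hypothesis, Qiu's polynomial topological characteristic factor theorem applies to $\pi\colon X \to Y$ (by first composing with the projection $Y \to X_\infty$ if needed); however, as stated Qiu's theorem only yields \emph{nonemptiness} of $R_q(W_1, \ldots, W_d)$, whereas what we require is that this set meet every $\ip$ set. Thus the main obstacle is to establish an $\ip^*$-enhancement of Qiu's theorem: for every $\ip$ set $I \subseteq \Z$, $R_q(\pi W_1, \ldots, \pi W_d) \cap I \neq \emptyset$ should imply $R_q(W_1, \ldots, W_d) \cap I \neq \emptyset$.

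The natural route to this strengthening is to revisit Qiu's proof, which builds on the characteristic factor analysis of Glasner--Huang--Shao--Weiss--Ye and employs IP-limits in the style of Furstenberg--Katznelson and Bergelson--McCutcheon. The underlying idempotent ultrafilter apparatus is compatible with restriction to any prescribed $\ip$ set, and should yield the required enhancement essentially verbatim. Once this strengthening is in hand, $R_q(W_1, \ldots, W_d)$ is $\ip^*$ for every $n \in A$, whereupon \cref{lemma_diff_not_ps} completes the proof that $A \setminus B$ is not piecewise syndetic.
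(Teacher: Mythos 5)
Your overall strategy --- apply \cref{lemma_diff_not_ps} with $A = R_p(\pi U_1, \ldots, \pi U_d)$, $B = R_p(U_1,\ldots,U_d)$, and reduce to showing each translate $B - n$ ($n \in A$) lies in a suitable filter-like family --- is exactly the paper's strategy, and your first two steps (the reduction via \cref{lem_translated_poly_returns}, and the fact that $R_q(V,\ldots,V)$ is $\ip^*$ in $Y$ by Bergelson--McCutcheon) are correct. But the proof has a genuine gap at the point you yourself flag: the ``$\ip^*$-enhancement of Qiu's theorem.'' Transferring the $\ip^*$ property of $R_q(\pi W_1,\ldots,\pi W_d)$ down in $Y$ to the $\ip^*$ property of $R_q(W_1,\ldots,W_d)$ up in $X$ is a substantial strengthening of \cite[Theorem B]{Qiu23} --- Qiu's theorem, as stated and as used in this paper, yields only \emph{nonemptiness} (hence, via \cref{lem_nonempty_implies_syndetic}, syndeticity) of the lifted return-time set, and syndetic sets are very far from $\ip^*$ and do not form a filter. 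Asserting that the idempotent-ultrafilter machinery in Qiu's proof ``should yield the required enhancement essentially verbatim'' is not a proof; it is an open-ended claim about the internals of a long external argument, and nothing in the present paper supplies it.

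The paper's proof sidesteps this entirely by choosing a different family $\family$: not the $\ip^*$ sets, but the upward closure of the collection of all sets $R_q(W_1,\ldots,W_e)$ with $q \in \Pol_e$, $W_i \subseteq X$ open, and $\pi W_1 \cap \cdots \cap \pi W_e \neq \emptyset$. Each translate $B - a$ with $a \in A$ lands in this family by \cref{lem_translated_poly_returns} (exactly your computation), so condition (ii) of \cref{lemma_diff_not_ps} holds. The point is that condition (i) --- finite intersections are syndetic --- can be verified for \emph{this} family using only the tools actually available: \cref{lemma_special_sets_of_returns_form_a_filterpol} shows (using openness of $\pi$ and transitivity of $Y$) that the intersection of two members of the family contains a third member, so the family is a filter base up to upward closure; and \cref{thm_QiuC} (Qiu's nonemptiness theorem combined with Bergelson--McCutcheon applied inside $X$, not $Y$) shows each member is syndetic. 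In short: you do not need $B - a$ to be $\ip^*$; you only need it to belong to \emph{some} upward-closed family whose finite intersections are syndetic, and the return-time sets themselves form such a family. If you replace your third paragraph with this choice of $\family$ and the two supporting lemmas, the argument closes.
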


Assuming \cref{conj_ps_structure_theory_open}, we now give a proof of \cref{mainthm_ps_structure_theory_pol}.

\begin{proof}[Proof of \cref{mainthm_ps_structure_theory_pol} using \cref{conj_ps_structure_theory_open}]
    Let $(X,T)$ be a minimal system, and let $\pi: X \to X_\infty$ be the factor map to the $\infty$-step pronilfactor of $X$.
    
    Let $d \in \N$, $\tilde U_1, \ldots, \tilde U_d \subseteq X$ be open and nonempty, and $\tilde p = (\tilde p_1,\ldots,\tilde p_d) \in \Z[x]^d$ be essentially distinct.
    Our goal is to show that the set
    \begin{equation}\label{eq_proof_thmA_1}
        R_{\tilde p}\big((\pi \tilde U_1)^\circ,\dots,(\pi \tilde U_d)^\circ\big) \big\setminus R_{\tilde p}\big(\tilde U_1,\dots,\tilde U_d\big)
    \end{equation}
    is not piecewise syndetic.  Define $p_i(n) = \tilde p_i(n)-\tilde p_i(0)$ and $U_i = T^{-\tilde p_i(0)}\tilde U_i$.
    Since each $T^{-\tilde p_i(0)}$ is a homeomorphism that commutes with $\pi$, it follows by \cref{lem_translated_poly_returns} that $p:=(p_1,\dots,p_d)\in\Pol_d$, 
    \begin{align*}
        R_p\big((\pi U_1)^\circ,\dots,(\pi U_d)^\circ\big) &= R_{\tilde p}\big((\pi \tilde U_1)^\circ,\dots,(\pi \tilde U_d)^\circ\big),
    \text{ and } \\
    R_p(U_1,\dots,U_d) &= R_{\tilde p}(\tilde U_1,\dots,\tilde U_d).
    \end{align*}
    Therefore we can express the set in \eqref{eq_proof_thmA_1} as
    \[
    R_p\big((\pi U_1)^\circ,\dots,(\pi U_d)^\circ \big)\setminus R_p(U_1,\dots,U_d).
    \]
    In other words, we may assume without loss of generality that $p\in\Pol_d$, rather than only that $p\in\Z[x]^d$ is essentially distinct.  This will allow us to apply \cref{conj_ps_structure_theory_open}.
    
    By the O-diagram construction (see \cite[Theorem~3.1]{Veech-pointDistalFlow} or \cite[Theorem~2.19]{glasner_huang_shao_weiss_ye_2020}), there exist an open extension of minimal systems $\pi^*: (X^*, T) \to (X_\infty^*, T)$ and almost 1--1 extensions $\tau: (X^*, T) \to (X,T)$ and $\sigma: (X_\infty^*, T) \to (X_\infty,T)$ such that $\pi \circ \tau = \sigma \circ \pi^*$:
    \begin{equation}
    \label{tikz_comm_diagram}
        \begin{tikzcd}[row sep = large, column sep = large]
            X \ar[r, leftarrow, "\tau"] \ar[d, rightarrow, "\pi"] &  X^* \ar[d, rightarrow, "\pi^*"]\\
            X_{\infty} \ar[r, leftarrow, "\sigma"] & X_{\infty}^*
        \end{tikzcd}
    \end{equation}
    It is the case that $X^*_\infty$ is the $\infty$-step pronilfactor of $X^*$ and that $\pi^*: X^* \to X^*_\infty$ is the associated factor map (see \cite[Lemma 5.6]{glasner_huang_shao_weiss_ye_2020}).

    Since $\sigma$ is an almost 1--1 extension of minimal systems, it follows by \cref{lemma_multiple_set_returns_lift_in_almost_11_extensions} that
    \[
        R_p(\pi^* \tau^{-1}U_1, \ldots, \pi^* \tau^{-1}U_d) = R_p((\sigma \pi^* \tau^{-1}U_1)^\circ, \ldots, (\sigma \pi^* \tau^{-1}U_d)^\circ).
    \]
    From $\pi \circ \tau = \sigma \circ \pi^*,$ we get that $\sigma \pi^* \tau^{-1}U_i = \pi U_i$.  
    Therefore, from the previous line,
    \[
        R_p(\pi^* \tau^{-1}U_1, \ldots, \pi^* \tau^{-1}U_d) = R_p((\pi U_1)^\circ, \ldots, (\pi U_d)^\circ).
    \]
    We see then that
    \[
        R_p(U_1, \ldots, U_d) = R_p(\tau^{-1}U_1, \ldots, \tau^{-1}U_d).
    \]
    By \cref{conj_ps_structure_theory_open} applied to $\pi^*$, the sets $\tau^{-1}U_1$, \dots, $\tau^{-1}U_d$, and the tuple $p$, we have that
    \begin{align}
    \label{eqn_applying_ps_structure_to_shadow_v2}
        R_p(\pi^* \tau^{-1}U_1, \ldots, \pi^* \tau^{-1}U_d) \setminus R_p(\tau^{-1}U_1, \ldots, \tau^{-1}U_d) \text{ is not piecewise syndetic.}
    \end{align}
    Therefore,
    \[
        R_p((\pi U_1)^\circ, \ldots, (\pi U_d)^\circ) \setminus R_p(U_1, \ldots, U_d)
    \]
    is not piecewise syndetic, as was to be shown.
\end{proof}

It remains to prove \cref{thm_ps_structure_theory_open}. We do this in \cref{sec:proof_a_open} following preparatory results in \cref{subsec:poly_visit_families} and \cref{subsec:bmq}.

\subsection{Families of polynomial visits}
\label{subsec:poly_visit_families}

The following lemma describes a property enjoyed by families of sets of visit times among sets whose images have non-empty intersection.

\begin{lemma}
\label{lemma_special_sets_of_returns_form_a_filterpol}
    Let $\pi: X \to Y$ be an open factor map of systems, and suppose $(Y,T)$ is transitive. For all $d, e \in \N$ and all $U_1,\ldots, U_d, V_1,\ldots, V_e \subseteq X$ nonempty and open satisfying $\pi U_1\cap\ldots\cap \pi U_d \neq \emptyset$ and $\pi V_1\cap\ldots\cap \pi V_e \neq \emptyset$, and for all $p \in\Pol_d$ and $q \in \Pol_e$, there exist $W_1, \ldots, W_{d+e} \subseteq X$ open, nonempty that satisfy $\pi W_1\cap\ldots\cap \pi W_{d+e} \neq \emptyset,$ and $r \in \Pol_{d+e}$ such that
    \[
        R_r(W_1,\ldots,W_{d+e}) \subseteq R_p(U_1,\ldots,U_d) \cap R_q(V_1,\ldots,V_e).
    \]
\end{lemma}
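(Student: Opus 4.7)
My plan is to build the tuple $r$ by concatenating $p$ with a modification of $q$, and to build the $W_i$'s by adjoining the $U_i$'s with translates of the $V_j$'s, where the translation is chosen via transitivity and the modification is designed purely to enforce the distinctness clause in $\Pol_{d+e}$.

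\textbf{Step 1: Fix the auxiliary polynomial.} I would choose an auxiliary $P \in \Z[x]$ with $P(0) = 0$ whose degree exceeds $\deg(p_i - q_j)$ for every pair $(i,j)$ — the simplest choice is $P(n) = n^K$ for $K$ sufficiently large. I would then set
\[
    r := (p_1, \ldots, p_d, q_1 + P, \ldots, q_e + P).
\]
Membership $r \in \Pol_{d+e}$ is immediate: each component vanishes at $0$; the $p_i$'s are mutually distinct because $p \in \Pol_d$; the polynomials $q_j + P$ are mutually distinct because the $q_j$'s are; and $p_i \neq q_j + P$ holds because $p_i - q_j$ has degree strictly less than $P$.

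\textbf{Step 2: Use transitivity to merge the two open sets in $Y$.} Because $\pi$ is open, the sets $\pi U_1 \cap \cdots \cap \pi U_d$ and $\pi V_1 \cap \cdots \cap \pi V_e$ are nonempty and open in $Y$. Transitivity of $(Y,T)$ then yields an integer $b$ with
\[
    \big(\pi U_1 \cap \cdots \cap \pi U_d\big) \cap T^{b}\big(\pi V_1 \cap \cdots \cap \pi V_e\big) \neq \emptyset.
\]
I would then declare $W_i := U_i$ for $1 \leq i \leq d$ and $W_{d+j} := T^{b} V_j$ for $1 \leq j \leq e$. Since $\pi$ commutes with $T^b$, this immediately gives $\pi W_1 \cap \cdots \cap \pi W_{d+e} \neq \emptyset$.

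\textbf{Step 3: Verify the return-time containment.} Suppose $n \in R_r(W_1, \ldots, W_{d+e})$, so there is $x \in X$ with $T^{p_i(n)} x \in U_i$ for $i \leq d$ and $T^{q_j(n) + P(n)} x \in T^{b} V_j$ for $j \leq e$. The first batch of conditions gives $n \in R_p(U_1, \ldots, U_d)$ directly. The second batch rearranges to $T^{q_j(n)}(T^{P(n) - b} x) \in V_j$ for every $j$; thus $x' := T^{P(n) - b} x$ witnesses $n \in R_q(V_1, \ldots, V_e)$. This finishes the containment.

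\textbf{Main obstacle.} The only subtle point is meeting the distinctness requirement of $\Pol_{d+e}$ without disturbing either the containment or the non-emptiness $\pi W_1 \cap \cdots \cap \pi W_{d+e} \neq \emptyset$. Shifting $q$ by $P$ is "free" from the point of view of the containment because the same $x$ on the $X$-side can be translated by $T^{P(n)-b}$ to produce the witness for $q$; it does not interact with the $\pi$-picture at all since the $W_i$'s are defined independently of $P$. This decoupling of the polynomial-distinctness issue from the geometric-intersection issue (handled separately by transitivity in Step 2) is what makes the construction go through.
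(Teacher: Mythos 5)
Your proposal is correct and follows essentially the same route as the paper's proof: transitivity of $(Y,T)$ supplies the translate of the $V_j$'s that makes the images in $Y$ intersect, and the $q_j$'s are all shifted by a common polynomial vanishing at $0$ to force membership in $\Pol_{d+e}$ (the paper uses a linear shift $mx$ where you use $n^K$, which is immaterial). Your Step 3 just spells out the containment verification that the paper leaves to the reader.
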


\begin{proof}
    Suppose $U_1,\ldots, U_d, V_1,\ldots, V_e \subseteq X$ are nonempty and open satisfying $\pi U_1\cap\ldots\cap \pi U_d \neq \emptyset$ and $\pi V_1\cap\ldots\cap \pi V_e \neq \emptyset$.  Since $\pi$ is open, the sets $\pi U_1\cap\ldots\cap \pi U_d$ and $\pi V_1\cap\ldots\cap \pi V_e$ are open and nonempty subsets of $Y$.  By transitivity, there exists $k \in \Z$ such that
    \[\pi U_1\cap\ldots\cap \pi U_d \cap T^{-k} \big(\pi V_1\cap\ldots\cap \pi V_e \big) \neq \emptyset.\]
    We see that $U_1, \ldots, U_d, T^{-k}V_1, \ldots, T^{-k}V_e$ are nonempty, open subsets of $X$.  Since $T^{-k} \pi = \pi T^{-k}$, we see
    \[\pi U_1\cap\ldots\cap \pi U_d \cap \pi T^{-k} V_1\cap\ldots\cap \pi T^{-k} V_e  \neq \emptyset.\]
    If $p=(p_1,\ldots,p_d)$ and $q=(q_1,\ldots,q_e),$ let $m \in \Z$ be such that the tuple
    \[r \defeq (p_1, \ldots, p_d, q_1 + mx, \ldots, q_e + mx) \in \Z[x]^{d+e}\]
    consists of pairwise distinct polynomials, that is, belongs to $\Pol_{d+e}$.
    We need only to note that the set $R_{r} (U_1,\ldots, U_d, T^{-k}V_1, \ldots, T^{-k}V_e)$ is a subset of
    \[
        R_p(U_1,\ldots,U_d) \cap R_q(V_1,\ldots,V_e),
    \]
    as was to be shown.
\end{proof}

\subsection{Theorems of Bergelson-McCutcheon and Qiu}
\label{subsec:bmq}

This subsection contains highly non-trivial inputs to the proof of our main theorem: the IP Polynomial \Szemeredi{} theorem of Bergelson and McCutcheon \cite[Lemma 6.12]{Bergelson_McCutcheon00} in \cref{lem_nonempty_implies_syndetic}, and a polynomial recurrence lifting theorem of Qiu \cite[Theorem~B]{Qiu23} in \cref{thm_QiuC}.

The following lemma shows that in minimal systems, if a return-time set is non-empty, then it is syndetic.

\begin{lemma}
\label{lem_nonempty_implies_syndetic}
    Let $(X,T)$ be a minimal system, $d \in \N$, $p \in \Z[x]^d$ be essentially distinct, and $U_1, \ldots, U_d \subseteq X$ be nonempty and open.  If the set $R_p(U_1, \ldots, U_d)$ is nonempty, then it is syndetic.
\end{lemma}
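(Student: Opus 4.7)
The plan is to reduce to a ``diagonal'' situation where the IP Polynomial \Szemeredi{} theorem of Bergelson--McCutcheon applies directly, and then to upgrade the resulting IP$^*$ property to syndeticity via a standard argument.

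\textbf{Reduction to $\Pol_d$ and the diagonal.} Since $R_p(U_1,\ldots,U_d) \neq \emptyset$, I first pick any $a \in R_p(U_1,\ldots,U_d)$. By definition of $R_p$, the set
\[
    U \defeq T^{-p_1(a)}U_1 \cap \cdots \cap T^{-p_d(a)}U_d
\]
is nonempty and open. Applying \cref{lem_translated_poly_returns} with this choice of $a$ yields a tuple $q = (q_1,\ldots,q_d) \in \Pol_d$, defined by $q_i(n) \defeq p_i(n+a) - p_i(a)$, such that
\[
    R_p(U_1,\ldots,U_d) - a \;=\; R_q\bigl(T^{-p_1(a)}U_1,\ldots,T^{-p_d(a)}U_d\bigr).
\]
Since the property of being syndetic is translation-invariant, it suffices to show that the right-hand side is syndetic. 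The inclusion $U \subseteq T^{-p_i(a)}U_i$ (for each $i$) gives the routine containment
\[
    R_q(U,U,\ldots,U) \;\subseteq\; R_q\bigl(T^{-p_1(a)}U_1,\ldots,T^{-p_d(a)}U_d\bigr),
\]
so the problem reduces to showing that $R_q(U,\ldots,U)$ is syndetic, for a tuple $q \in \Pol_d$ and a nonempty open $U \subseteq X$.

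\textbf{Applying Bergelson--McCutcheon.} At this point I would invoke the IP Polynomial \Szemeredi{} theorem of Bergelson and McCutcheon \cite[Lemma 6.12]{Bergelson_McCutcheon00}, exactly as cited in the remark of \cref{sec_linear_rec_along_dy_synd_sets}: for any tuple $q \in \Pol_d$ and any nonempty open $U$ in the minimal system $(X,T)$, the return-time set $R_q(U,\ldots,U)$ has nonempty intersection with every $\IP$ set. Equivalently, $R_q(U,\ldots,U)$ is an $\IP^*$ set.

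\textbf{From $\IP^*$ to syndetic.} It remains to deduce syndeticity. This uses the standard fact that every thick subset of $\Z$ contains an $\IP$ set: given a thick $T \subseteq \Z$, construct a sequence $n_1, n_2, \ldots$ inductively by choosing $n_{k+1}$ inside an interval contained in $T$ long enough that $n_{k+1}$ together with each of the finitely many partial sums already formed from $n_1,\ldots,n_k$ still lies in $T$. Hence, if $R_q(U,\ldots,U)$ were not syndetic, its complement would be thick, and therefore contain an $\IP$ set, contradicting the $\IP^*$ property established above. This closes the argument.

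The only genuinely nontrivial step is the input from Bergelson--McCutcheon; everything else is either a translation of variables (the shift by $a$ to land in $\Pol_d$), a trivial monotonicity in the open sets (to pass to the diagonal), or the short thickness-to-$\IP$ construction. Since the topological form of the IP Polynomial \Szemeredi{} theorem for minimal systems is already invoked in the paper's introductory remark, I would simply quote it and proceed.
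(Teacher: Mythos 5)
Your proposal is correct and follows essentially the same route as the paper: translate by $a\in R_p(U_1,\ldots,U_d)$ to land in $\Pol_d$ via \cref{lem_translated_poly_returns}, pass to the diagonal set $V=T^{-p_1(a)}U_1\cap\cdots\cap T^{-p_d(a)}U_d$, and invoke \cite[Lemma 6.12]{Bergelson_McCutcheon00}. The only cosmetic difference is that the paper reads syndeticity of $R_q(V,\ldots,V)$ directly off that citation, whereas you extract the $\IP^*$ property and then supply the standard (and correct) thick-sets-contain-$\IP$-sets argument.
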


\begin{proof}
    Suppose $a \in R_p(U_1, \ldots, U_d)$.  Writing $p=(p_1,\ldots,p_d)$, define 
    \[V := T^{-p_1(a)} U_1 \cap \cdots \cap T^{-p_d(a)} U_d\;\;\text{and}\;\;q_i(n) := p_i(n+a) - p_i(a).\]  Note that $V$ is an open and nonempty subset of each $T^{-p_i(a)} U_i$. Combining this observation with \cref{lem_translated_poly_returns}, we see that $q = (q_1,\ldots,q_d) \in \Pol_d$ and that
    \[R_q(V,\ldots,V) \subseteq R_q(T^{-p_1(a)}U_1, \ldots, T^{-p_d(a)}U_d) = R_p(U_1,\ldots,U_d) - a.\]
    By \cite[Lemma 6.12]{Bergelson_McCutcheon00}, the set $R_q(V,\ldots,V)$ is syndetic, implying $R_p(U_1,\ldots,U_d)$ is syndetic as well.
\end{proof}

The following theorem of Qiu was discussed in the introduction; formulated in an equivalent way in \cref{thm_qiu_eqiuvalent_statement}, it is understood as a polynomial recurrence lifting theorem.  We formulate a version of it in \cref{thm_QiuC} that will be more convenient for our purposes, then we prove that it is indeed equivalent to \cref{thm_qiu_eqiuvalent_statement}.

\begin{theorem}[{\cite[Theorem B]{Qiu23}}]
\label{thm_QiuB}
Let $(X, T)$ be minimal and let $\pi: X \to X_{\infty}$ be the factor map to its $\infty$-step pronilfactor. There exist minimal systems $(X^*,T)$ and $(X_{\infty}^*,T)$ which are almost 1-1 extensions of $X$ and $X_{\infty}$ respectively, and a commutative diagram of factor maps as in \eqref{tikz_comm_diagram} such that for all open subsets $U_1, \ldots, U_d \subseteq X^*$ with $\pi^*(U_1)\cap\cdots\cap \pi^*(U_d)\neq \emptyset$ and all $p\in \Pol_d$,  we have $R_p(U_1,\ldots,U_d)\neq \emptyset$.
\end{theorem}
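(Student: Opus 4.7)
The final statement is Qiu's polynomial lifting theorem, a deep structural result building on the linear case of Glasner--Huang--Shao--Weiss--Ye. My plan is to follow the architecture of those proofs.

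The first step is to produce the almost $1$--$1$ extensions $X^*$ and $X_\infty^*$ and the commutative square. Here I would invoke the van der Woude / O-diagram construction already used in the proof of \cref{mainthm_ps_structure_theory_pol}: starting from $\pi\colon X\to X_\infty$, one passes to the Ellis semigroup, chooses a minimal idempotent, and takes pointed pre-images to obtain minimal almost $1$--$1$ extensions $\tau\colon X^*\to X$ and $\sigma\colon X_\infty^*\to X_\infty$ whose induced factor map $\pi^*\colon X^*\to X_\infty^*$ is open. One then verifies that $X_\infty^*$ is itself the maximal $\infty$-step pronilfactor of $X^*$. This uses the fact that almost $1$--$1$ extensions of minimal systems preserve the regionally proximal relation of order $k$ for every $k$, so they preserve each maximal $k$-step pronilfactor and thus their inverse limit.

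The core task is then the recurrence statement. Fix open $U_1,\ldots,U_d\subseteq X^*$ with $\pi^*(U_1)\cap\cdots\cap\pi^*(U_d)\neq\emptyset$ (openness of $\pi^*$ guarantees this intersection is itself open in $X_\infty^*$) and a tuple $p\in\Pol_d$. Working inside the product $(X^*)^d$, the goal becomes the following density assertion: for some $x^*\in X^*$, the closure of the orbit
\[
\overline{\big\{\big(T^{p_1(n)}x^*,\ldots,T^{p_d(n)}x^*\big):n\in\Z\big\}}
\]
projects by $(\pi^*)^{\times d}$ onto the analogous closure for the image point in $X_\infty^*$. Equivalently, the $\infty$-step pronilfactor is characteristic for the polynomial $d$-point topological orbit closures, in the precise sense that the fiber of the polynomial diagonal orbit closure in $(X_\infty^*)^d$ over any point is filled by projections of orbits in $(X^*)^d$. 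Once this is established, any preimage of $(\pi^*(x^*),\ldots,\pi^*(x^*))$ sitting inside $U_1\times\cdots\times U_d$ gives a point that is $p$-recurrent in $X^*$, yielding $R_p(U_1,\ldots,U_d)\neq\emptyset$.

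I would attack the characteristic factor claim by PET induction on the complexity of $(p_1,\ldots,p_d)$, with base case the linear characteristic factor theorem of Glasner--Huang--Shao--Weiss--Ye. Each PET step replaces the family by a strictly simpler one after passing to a derived system (a product or topological cube system) and applying a van der Corput / topological Cesàro averaging step; this is where, at each stage, one must check that the derived system still has the correct maximal $\infty$-step pronilfactor and that openness of the relevant factor maps is preserved. The main obstacle, and the heart of the argument, is precisely carrying out this PET scheme in the topological (rather than measure-theoretic) category while maintaining an open setup at every step, so that one may lift recurrence from each intermediate pronilfactor back to the enveloping system. This is what requires building a suitable topological analogue of the Host--Kra cubic structure adapted to polynomial orbits, and it is the part of Qiu's argument that would be the most substantial to reconstruct from scratch.
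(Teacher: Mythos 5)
First, a point of context: the paper does not prove \cref{thm_QiuB} at all --- it is imported verbatim as \cite[Theorem B]{Qiu23} and used as a black box (the paper only proves things \emph{from} it, such as \cref{thm_QiuC} and the equivalence with \cref{thm_qiu_eqiuvalent_statement}). So there is no internal proof to compare your attempt against; the relevant comparison is with Qiu's own argument.

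As a proof, your proposal has a genuine gap, and you name it yourself: the entire content of the theorem is the ``characteristic factor claim'' that the orbit closure of $n\mapsto(T^{p_1(n)}x^*,\ldots,T^{p_d(n)}x^*)$ in $(X^*)^d$ is saturated with respect to $(\pi^*)^{\times d}$ for a suitable (dense $\mathsf{G}_\delta$) set of points $x^*$, and you defer the PET induction that would establish it. The surrounding scaffolding you describe is accurate --- the O-diagram/van der Woude construction producing the square \eqref{tikz_comm_diagram} with $\pi^*$ open, and the fact that $X_\infty^*$ remains the maximal $\infty$-step pronilfactor of $X^*$ (this is \cite[Lemma 5.6]{glasner_huang_shao_weiss_ye_2020}, resting on preservation of the regionally proximal relations under almost 1--1 extensions) --- and the reduction of $R_p(U_1,\ldots,U_d)\neq\emptyset$ to the saturation statement is the right shape, though as written your final sentence of that step is slightly off: one does not merely need \emph{a} preimage of the diagonal point in $U_1\times\cdots\times U_d$, but rather that the fiber of the downstairs polynomial orbit closure over a point of $\pi^*U_1\cap\cdots\cap\pi^*U_d$ meets $U_1\times\cdots\times U_d$ \emph{inside} the upstairs orbit closure, which is exactly what saturation provides. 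But the PET scheme in the topological category --- controlling the derived systems, verifying that their maximal pronilfactors and the openness of the induced factor maps behave correctly at each van der Corput step, and closing the induction at the linear base case of Glasner--Huang--Shao--Weiss--Ye --- is where all the difficulty lives, and it is not carried out. What you have is a correct high-level roadmap of Qiu's strategy, not a proof.
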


\begin{theorem}
\label{thm_QiuC}
Let $(X, T)$ be a minimal system, and let $\pi: X \to Y$ contain the $\infty$-step pronilfactor of $X$.  Suppose that $\pi$ is open.  For all nonempty, open $U_1, \ldots, U_d \subseteq X$ with $\pi(U_1)\cap\ldots\cap \pi(U_d)\neq \emptyset$ and all $p \in \Pol_d$, the set $R_p(U_1,\ldots,U_d)$ is syndetic.
\end{theorem}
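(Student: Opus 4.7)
The plan is to deduce \cref{thm_QiuC} from Qiu's \cref{thm_QiuB} by lifting the given data to the O-diagram furnished there, applying \cref{thm_QiuB} in $X^*$, and then descending back to $X$. By \cref{lem_nonempty_implies_syndetic}, it suffices to prove that $R_p(U_1,\ldots,U_d)$ is nonempty.

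Write $\pi_\infty : X \to X_\infty$ for the factor map to the $\infty$-step pronilfactor, and let $\eta : Y \to X_\infty$ be the factor map with $\pi_\infty = \eta \circ \pi$, which exists because $Y$ contains $X_\infty$. Let $\tau : X^* \to X$, $\sigma : X_\infty^* \to X_\infty$, and $\pi^* : X^* \to X_\infty^*$ be as in \cref{thm_QiuB}, forming the commutative square $\pi_\infty \circ \tau = \sigma \circ \pi^*$. Set $W_i := \tau^{-1}(U_i)$; these are nonempty open subsets of $X^*$, and the identity $T \circ \tau = \tau \circ T$ yields the inclusion $R_p(W_1,\ldots,W_d) \subseteq R_p(U_1,\ldots,U_d)$. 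It therefore suffices to exhibit a common point in the sets $\pi^*(W_i)$, since then \cref{thm_QiuB}, applied to $W_1,\ldots,W_d$ and $p \in \Pol_d$, produces an element of $R_p(W_1,\ldots,W_d)$.

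To find such a common point I combine three ingredients: the openness of $\pi$, the semi-openness of $\eta$ supplied by \cref{lemma_semiopen}, and the almost 1--1 property of $\sigma$. Since $\pi$ is open, the hypothesis that $\pi(U_1) \cap \cdots \cap \pi(U_d) \neq \emptyset$ makes this intersection $V$ an open nonempty subset of $Y$. Semi-openness of $\eta$ forces $\eta(V)$ to have nonempty interior in $X_\infty$, so $\sigma^{-1}((\eta V)^\circ)$ is a nonempty open subset of $X_\infty^*$, and the residual set of points at which $\sigma$ has singleton fibers must meet this open set at some $z^*$.

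To finish, I verify that $z^* \in \pi^*(W_i)$ for every $i$. Since $\sigma(z^*) \in \eta(V) \subseteq \eta\pi(U_i) = \pi_\infty(U_i)$, choose $x_i \in U_i$ with $\pi_\infty(x_i) = \sigma(z^*)$ and any lift $x_i^* \in \tau^{-1}(x_i) \subseteq W_i$. Commutativity gives $\sigma(\pi^*(x_i^*)) = \pi_\infty(\tau(x_i^*)) = \pi_\infty(x_i) = \sigma(z^*)$, and then the singleton-fiber property at $z^*$ forces $\pi^*(x_i^*) = z^*$, so $z^* \in \pi^*(W_i)$. The step that requires the most care is precisely this common-point construction: the openness of $\pi$ is essential for keeping $V$ open so that, after applying the merely semi-open $\eta$, one still obtains an open set in $X_\infty$ on which the generic-fiber property of the almost 1--1 map $\sigma$ can be exploited.
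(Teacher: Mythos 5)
Your proof is correct and follows essentially the same route as the paper's: reduce syndeticity to nonemptiness via \cref{lem_nonempty_implies_syndetic}, pass to the O-diagram of \cref{thm_QiuB}, use openness of $\pi$, semi-openness of the map $Y \to X_\infty$, and the almost 1--1 property of $\sigma$ to produce a common point of the sets $\pi^*(\tau^{-1}U_i)$, then apply \cref{thm_QiuB} upstairs and push the resulting return time down along $\tau$. Your verification that the singleton-fiber point $z^*$ lies in each $\pi^*(W_i)$ in fact spells out a step the paper's proof leaves implicit.
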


\begin{proof}
Let $\rho: Y \to X_{\infty}$ be the factor map from $Y$ to the $\infty$-step nilfactor $X_{\infty}$ of $X$. In light of \cref{thm_QiuB}, we have the following commutative diagram:
\begin{center}
    \begin{tikzcd}[row sep = large, column sep = large]
        X \ar[r, leftarrow, "\tau"] \ar[d, rightarrow, "\pi"] &  X^* \ar[dd, rightarrow, "\pi^*"]\\
        Y \ar[d, rightarrow, "\rho"]& 
        \\
        X_{\infty} \ar[r, leftarrow, "\sigma"] & X_{\infty}^*
    \end{tikzcd}
\end{center}
Let $U_1, \ldots, U_d$ be nonempty, open subsets of $X$ with $\bigcap_{i=1}^d \pi(U_i)$ is a nonempty, open subset of $Y$. Since the map $\rho$ is semiopen, the interior of $\bigcap_{i=1}^d \rho(\pi(U_i))$ is nonempty. The map $\sigma: X_{\infty}^* \to X_{\infty}$ is almost 1-1 and so there exists $x \in \bigcap_{i=1}^d \rho(\pi(U_i))$ such that $\sigma^{-1}\{x\}$ is a singleton in $X_{\infty}^*$.

Now because the diagram above commutes,
\[
    x \in \bigcap_{i=1}^d \rho(\pi(U_i)) =\bigcap_{i=1}^d \sigma(\pi^*(\tau^{-1}(U_i))).
\]
It follows that 
\[
    \sigma^{-1}\{x\} \in \bigcap_{i=1}^d \pi^*(\tau^{-1}(U_i)).
\]
In particular, the right hand side is nonempty. Writing $p=(p_1,\ldots,p_d)$, by \cref{thm_QiuB} there exists $n \in \Z$ such that
\[
    T^{-p_1(n)} \tau^{-1}(U_1) \cap \cdots \cap T^{-p_d(n)} \tau^{-1}(U_d) \neq \emptyset.
\]
In particular, $R_p(\tau^{-1} U_1, \ldots, \tau^{-1} U_d) \neq \emptyset$. Since $T$ and $\tau$ commute, the set $R_p(U_1, \ldots, U_d)$ is nonempty.  It follows then by \cref{lem_nonempty_implies_syndetic} that the set $R_p(U_1, \ldots, U_d)$ is syndetic.
\end{proof}

We conclude this subsection with a proof that the topological characteristic factor result of Qiu in \cref{thm_QiuB} is equivalent to the one formulated in \cref{thm_qiu_eqiuvalent_statement} in the introduction.

\begin{theorem}
\label{thm_qiu_equivalent}
\cref{thm_qiu_eqiuvalent_statement} and 
\cref{thm_QiuB} are equivalent.
\end{theorem}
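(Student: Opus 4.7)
The plan is to prove the two implications separately, using the $O$-diagram construction (Veech; see \cite[Theorem~2.19]{glasner_huang_shao_weiss_ye_2020}) as the common tool. This construction produces minimal almost 1--1 extensions $\tau\colon X^*\to X$ and $\sigma\colon X^*_\infty\to X_\infty$ together with an open factor map $\pi^*\colon X^*\to X^*_\infty$ making the square \eqref{tikz_comm_diagram} commute; by \cite[Lemma~5.6]{glasner_huang_shao_weiss_ye_2020}, $X^*_\infty$ is the maximal $\infty$-step pronilfactor of $X^*$, so \cref{thm_qiu_eqiuvalent_statement} applies verbatim to the upstairs system.

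For the implication \cref{thm_QiuB}$\Rightarrow$\cref{thm_qiu_eqiuvalent_statement}, I would follow the same scheme used in the proof of \cref{thm_QiuC}. Given essentially distinct $p$, nonempty open $U_1,\ldots,U_d\subseteq X$, and some $n_0\in R_p((\pi U_1)^\circ,\ldots,(\pi U_d)^\circ)$, I would first apply \cref{lem_translated_poly_returns} with $a=n_0$, replacing $p$ by the shifted tuple $q\in\Pol_d$ and $U_i$ by $U_i':=T^{-p_i(n_0)}U_i$, to reduce to the case $q\in\Pol_d$ with $\bigcap_i(\pi U_i')^\circ\neq\emptyset$. Working inside the $O$-diagram supplied by \cref{thm_QiuB}, the almost 1--1 property of $\sigma$ lets me pick $x\in\bigcap_i(\pi U_i')^\circ$ with $\sigma^{-1}\{x\}=\{y\}$ a singleton, since the set of such $x$ is residual in $X_\infty$ and therefore meets every nonempty open set. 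Surjectivity of $\tau$ combined with the identity $\pi\circ\tau=\sigma\circ\pi^*$ forces $y\in\bigcap_i\pi^*(\tau^{-1}U_i')$, so \cref{thm_QiuB} applied to $(\tau^{-1}U_1',\ldots,\tau^{-1}U_d')$ and $q$ yields $R_q(\tau^{-1}U_1',\ldots,\tau^{-1}U_d')\neq\emptyset$. Applying $\tau$ (which commutes with $T$) pushes this to $R_q(U_1',\ldots,U_d')\neq\emptyset$, and \cref{lem_translated_poly_returns} then translates this back to $R_p(U_1,\ldots,U_d)\neq\emptyset$.

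The reverse implication \cref{thm_qiu_eqiuvalent_statement}$\Rightarrow$\cref{thm_QiuB} is cleaner. I would construct the $O$-diagram as above and observe that openness of $\pi^*$ gives $(\pi^*U_i)^\circ=\pi^*U_i$ for every open $U_i\subseteq X^*$. Given $U_1,\ldots,U_d\subseteq X^*$ open with $\bigcap_i\pi^*(U_i)\neq\emptyset$ and any $p\in\Pol_d$, the condition $p_i(0)=0$ immediately yields $0\in R_p(\pi^*U_1,\ldots,\pi^*U_d)=R_p((\pi^*U_1)^\circ,\ldots,(\pi^*U_d)^\circ)$. Invoking \cref{thm_qiu_eqiuvalent_statement} for the system $(X^*,T)$ and its factor map $\pi^*\colon X^*\to X^*_\infty$ then delivers $R_p(U_1,\ldots,U_d)\neq\emptyset$, which is the conclusion of \cref{thm_QiuB}.

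The main obstacle I anticipate lies in the first direction, where the translation argument reducing essentially distinct tuples to $\Pol_d$ has to be interleaved with the transport of the recurrence witness across the $O$-diagram via the almost 1--1 map $\sigma$. The reverse direction and the identification of $X^*_\infty$ as the $\infty$-step pronilfactor of $X^*$ are routine once the diagram is in hand.
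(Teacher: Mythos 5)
Your proposal is correct and takes essentially the same route as the paper: both directions pass through the O-diagram, with the identification of $X^*_\infty$ as the maximal $\infty$-step pronilfactor of $X^*$ and the observation $(\pi^*U_i)^\circ=\pi^*U_i$ doing the work in the easy direction. In the other direction you merely reorder the paper's steps, shifting by $n_0$ first and then transporting the witness through the almost 1--1 map $\sigma$, whereas the paper first transports the whole return-time set via \cref{lemma_multiple_set_returns_lift_in_almost_11_extensions} and then shifts; the two are equivalent.
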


\begin{proof}
    (\cref{thm_qiu_eqiuvalent_statement} implies \cref{thm_QiuB}) \ By the O-diagram construction (see \cite[Theorem~3.1]{Veech-pointDistalFlow} or \cite[Theorem~2.19]{glasner_huang_shao_weiss_ye_2020}), there exist minimal systems $(X^*,T)$ and $(X_{\infty}^*,T)$ which are almost 1-1 extensions of $X$ and $X_{\infty}$ respectively, and a commutative diagram of factor maps as in \eqref{tikz_comm_diagram}.
    Let $U_1, \ldots, U_d \subseteq X^*$ be open and satisfy $\pi^*(U_1)\cap\ldots\cap \pi^*(U_d)\neq \emptyset$.  
    Let $p\in \Pol_d$.

    It is the case that $X^*_\infty$ is the $\infty$-step pronilfactor of $X^*$ and that $\pi^*: X^* \to X^*_\infty$ is the associated factor map (see \cite[Lemma 5.6]{glasner_huang_shao_weiss_ye_2020}).  Note that $\pi^*(U_1)\cap\ldots\cap \pi^*(U_d)\neq \emptyset$ implies that $0 \in R_p(\pi^*(U_1), \ldots, \pi^*(U_d))$; in particular, the set $R_p(\pi^*(U_1), \ldots, \pi^*(U_d))$ is nonempty. Since $\pi^*$ is open, $(\pi^*(U_i))^\circ = \pi^*(U_i)$, so it follows from \cref{thm_qiu_eqiuvalent_statement} that $R_p(U_1, \ldots, U_d)$ is nonempty, as was to be shown.\\

    (\cref{thm_QiuB} implies \cref{thm_qiu_eqiuvalent_statement}) \ Let $d \in \N$, $U_1, \ldots, U_d \subseteq X$ be nonempty and open, and $p \in \Z[x]^d$ be essentially distinct.  Suppose $R_p((\pi U_1)^\circ, \ldots, (\pi U_d)^\circ) \neq \emptyset$.  We must show that $R_p(U_1, \ldots, U_d)$ $\neq \emptyset$.

    By \cref{thm_QiuB}, there exist minimal systems $X^*$ and $X_{\infty}^*$ which are almost 1-1 extensions of $X$ and $X_{\infty}$ respectively, and a commuting diagram as in \eqref{tikz_comm_diagram}. Since $\tau$ is surjective, we see that
    \[\sigma \pi^* \tau^{-1} U_i = \pi \tau \tau^{-1} U_i = \pi U_i.\]
    By our assumptions, the previous line, and \cref{lemma_multiple_set_returns_lift_in_almost_11_extensions}, we see that
    \[\emptyset \neq R_p((\pi U_1)^\circ, \ldots, (\pi U_d)^\circ) = R_p((\sigma \pi^* \tau^{-1} U_1)^\circ, \ldots, (\sigma \pi^* \tau^{-1} U_d)^\circ) = R_p(\pi^* \tau^{-1} U_1, \ldots, \pi^* \tau^{-1} U_d).\]
    Let $n \in R_p(\pi^* \tau^{-1}U_1, \ldots, \pi^* \tau^{-1}U_d)$, and note that
    \[\pi^* T^{-p_1(n)} \tau^{-1}U_1 \cap \cdots \cap \pi^* T^{-p_d(n)} \tau^{-1}U_d \neq \emptyset.\]
    Define $q \in \Z[x]^d$ by $q_i(m) = p_i(m+n) - p_i(n)$.  
    By \cref{lem_translated_poly_returns}, $q\in \Pol_d$.
    By \cref{thm_QiuB} applied to the sets $T^{-p_1(n)}\tau^{-1} U_1, \ldots, T^{-p_d(n)}\tau^{-1} U_d \subseteq X^*$ and the polynomial tuple $q$, we have that
    \[R_q(T^{-p_1(n)}\tau^{-1} U_1, \ldots, T^{-p_d(n)}\tau^{-1} U_d) \neq \emptyset.\]
    By \cref{lem_translated_poly_returns}, we see that
    \[\emptyset \neq R_q(T^{-p_1(n)}\tau^{-1} U_1, \ldots, T^{-p_d(n)}\tau^{-1} U_d) = R_p(\tau^{-1} U_1, \ldots, \tau^{-1} U_d) - n = R_p(U_1, \ldots, U_d) - n.\]
    It follows that $R_p(U_1, \ldots, U_d) \neq \emptyset$, as was to be shown.
\end{proof}

\subsection{Proof of \texorpdfstring{\cref{mainthm_ps_structure_theory_pol}}{Theorem A}}

\label{sec:proof_a_open}

We conclude the proof of \cref{mainthm_ps_structure_theory_pol} by proving \cref{thm_ps_structure_theory_open}.  (This reduction was explained in \cref{sec_reduction_to_open}.)

\begin{proof}[Proof of \cref{thm_ps_structure_theory_open}]
Define
\[\family \defeq \ \uparrow \big\{ R_p(U_1,\ldots,U_d) : \ d\in\N, \ p\in\Pol_d, \ \text{$U_1,\ldots, U_d \subseteq X$ open, $\pi U_1\cap \cdots \cap \pi U_d \neq \emptyset$} \big\}.\]
To show that for all nonempty, open $U_1, \ldots, U_d \subseteq X$ and all $p\in\Pol_d$, the set
\[R_p(\pi U_1, \ldots, \pi U_d) \setminus R_p(U_1, \ldots, U_d)\]
is not piecewise syndetic, it suffices, via \cref{lemma_diff_not_ps}, to show:
\begin{enumerate}
    \item \label{item:sub-collection-F} the intersection of all sets in any finite sub-collection of $\family$ is syndetic; and 

    \item \label{item:subcollection-F-2} $R_p(U_1, \ldots, U_d)\subseteq R_p(\pi U_1, \ldots, \pi U_d) \subseteq R_p(U_1, \ldots, U_d)-\family.$
\end{enumerate}

Property \eqref{item:sub-collection-F} follows by combining \cref{lemma_special_sets_of_returns_form_a_filterpol} and \cref{thm_QiuC}.  To see why property \eqref{item:subcollection-F-2} holds, note first that by \cref{lemma_containment_in_one_direction},
\[R_p(U_1, \ldots, U_d)\subseteq R_p(\pi U_1, \ldots, \pi U_d).\]
Taking $a\in R_p(\pi U_1, \ldots, \pi U_d),$ where $p=(p_1,\ldots,p_d),$ we have 
\[
    \pi(T^{-p_1(a)}U_1)\cap \ldots\cap \pi(T^{-p_d(a)}U_d)\neq \emptyset.
\]
Defining $q=(q_1,\ldots,q_d)$ by $q_i(n) = p_i(n+a)-p_i(a),$ we have by \cref{lem_translated_poly_returns} that $q \in \Pol_d$ and
\[
    R_p(U_1, \ldots, U_d) - a = R_q(T^{-p_1(a)}U_1,\ldots, T^{-p_d(a)}U_d) \in \family.
\]
This proves that $R_p(\pi U_1, \ldots, \pi U_d) \subseteq R_p(U_1, \ldots, U_d)-\family$, as desired.
\end{proof}

\section{Applications}
\label{sec_applications}

In this section, we prove Theorems \ref{Mainthm:odd-recurrence-pol}, \ref{cor:pol_k_p}, \ref{cor:totally_minimal}, and \ref{mainthm_conjecture_are_equiv}.

\subsection{Criterion for disjointness}
\label{sec_disjointness_criterion}

In this subsection we establish an analogue for minimal systems of \cite[Theorem 3.1]{Berg71} in ergodic theory, describing a criterion for disjointness involving a distal system.

A \emph{joining} of two systems $(X, T)$ and $(Y, S)$ is a subsystem $(W, T \times S)$ of $(X \times Y, T \times S)$ such that $\pi_X(W) = X$ and $\pi_Y(W) = Y$, where $\pi_X, \pi_Y$ are the projections from $X \times Y$ to $X$ and $Y$, respectively. 
We say two systems are \emph{disjoint} if their only joining is the entire product system $(X \times Y, T \times S)$.
Points $x$ and $y$ in a system $(X,T)$ are \emph{proximal} if there exists $z \in X$ such that $(z,z) \in \orb_{T \times T}(x,y)$.  
A system $(X,T)$ is \emph{distal} if for all distinct $x, y \in X$, the points $x$ and $y$ are not proximal.

For the proof of the following proposition, recall the discussion from \cref{sec_prelims_top_dyn} regarding the maximal equicontinuous factor and eigenfunctions.

\begin{proposition}\label{lem:disjoint-mef}
Let $(X, T)$ be a minimal distal system and let $(Y, S)$ be a minimal system. The two systems are disjoint if and only if they do not share any nontrivial eigenvalues.    
\end{proposition}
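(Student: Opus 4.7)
The plan is to prove the two directions separately; the forward implication is a direct construction, while the converse relies on the Furstenberg structure theorem for minimal distal systems and a transfinite induction on its isometric tower.

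For the forward direction, I would argue by contrapositive. Suppose $\theta$ is a common nontrivial eigenvalue with eigenfunctions $f\colon X \to \C$ and $g\colon Y \to \C$. As noted in \cref{sec_prelims_top_dyn}, these can be normalized to take values in a single nontrivial closed subgroup $K \subseteq \T$ and realized as factor maps onto $K$-rotations. Then $h(x,y) \defeq f(x)\overline{g(y)}$ is continuous and $T\times S$-invariant, so each level set $h^{-1}(c)$ is a proper closed $T\times S$-invariant subset of $X\times Y$. The fact that $f(X) = g(Y) = K$ shows at once that each such level set projects onto both $X$ and $Y$, yielding a proper joining and contradicting disjointness.

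For the converse, assume $(X,T)$ and $(Y,S)$ share no nontrivial eigenvalues. The key input is the Furstenberg structure theorem: the minimal distal system $(X,T)$ is a transfinite inverse limit of a tower $\{*\} = X_0 \leftarrow X_1 \leftarrow \cdots \leftarrow X_\lambda = X$ in which each successor map $X_{\beta+1} \to X_\beta$ is an isometric extension and limit ordinals correspond to inverse limits. Since the eigenvalues of a factor are contained in those of the total system, each $X_\beta$ also shares no nontrivial eigenvalue with $Y$. I would prove by transfinite induction on $\beta$ that $X_\beta$ is disjoint from $Y$. The base case is trivial, and limit ordinals pass through because an inverse limit of full joinings is the full joining. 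The content is in the successor step: let $W \subseteq X_{\beta+1} \times Y$ be a minimal joining; its image in $X_\beta \times Y$ is a joining that, by the inductive hypothesis, equals all of $X_\beta \times Y$, and $W \to X_\beta \times Y$ is itself an isometric extension as a closed invariant subsystem of the isometric extension $X_{\beta+1} \times Y \to X_\beta \times Y$. The no-common-eigenvalues hypothesis is then invoked to rule out proper isometric subextensions of $X_{\beta+1} \times Y \to X_\beta \times Y$, forcing $W = X_{\beta+1} \times Y$.

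The main obstacle I anticipate is the successor step, specifically translating ``no new common eigenvalues between $X_{\beta+1}$ and $Y$'' into a statement that no proper closed invariant set sits strictly between $X_\beta \times Y$ and $X_{\beta+1} \times Y$. This is the topological analogue of Berg's cocycle analysis in \cite{Berg71}, and I would follow Furstenberg's treatment of isometric extensions in topological dynamics: proper closed invariant sub-bundles of an isometric extension over a minimal base correspond to characters of the structure group that are cohomologous to eigenfunctions of the total system, and any such sub-bundle would therefore manufacture a new eigenvalue common to $X_{\beta+1}$ and $Y$, contradicting the hypothesis.
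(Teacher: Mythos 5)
Your forward direction is correct and is essentially the same content the paper extracts from Peleg's theorem: a common nontrivial eigenvalue $\theta$ with eigenfunctions normalized onto the closed subgroup $K=\overline{\langle e^{2\pi i\theta}\rangle}\subseteq S^1$ makes $h(x,y)=f(x)\overline{g(y)}$ a continuous nonconstant $T\times S$-invariant function, and any level set is a proper joining. No complaints there.

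The converse is where the gap lies. The paper does not run the induction you propose; it reduces the whole statement to two black boxes from Auslander \cite{Auslander88} (a minimal distal system is disjoint from a minimal system iff their maximal equicontinuous factors are disjoint; two minimal equicontinuous systems are disjoint iff their product is transitive) plus Peleg \cite{Peleg72}. Your plan amounts to re-proving the first of these black boxes by transfinite induction on the Furstenberg tower, and the successor step as you state it does not go through. First, the asserted dictionary ``proper closed invariant sub-bundles of an isometric extension correspond to characters cohomologous to eigenfunctions of the total system'' is false as a general principle: a nontrivial isometric extension need not carry any new eigenvalues (the Anzai skew product $(x,z)\mapsto(x+\alpha,z+x)$ over the rotation is a proper isometric extension with the same point spectrum), so the mere existence of a proper isometric sub-extension of $X_{\beta+1}\times Y\to X_\beta\times Y$ is not in tension with the no-common-eigenvalue hypothesis. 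What must be exploited is that the cocycle of this sub-extension has the special form $\chi\circ\sigma(x)$, depending only on the $X$-coordinate, and one must ``separate variables'' in the resulting equation $F(Tx,Sy)=\chi(\sigma(x))F(x,y)$ to produce an eigenfunction of $Y$. Second, that separation step is exactly where the topological category bites: the natural move of comparing $F(x,y)$ with $F(x,y')$ produces an invariant function on $X_\beta\times Y\times Y$, and $Y\times Y$ is essentially never minimal, so you cannot conclude it is constant; Berg's argument in \cite{Berg71} gets around this by integrating over an ergodic decomposition, a tool unavailable here. The known topological proofs (Furstenberg, Auslander Ch.~11) instead use the Ellis semigroup / quasi-factor machinery, and there are further unaddressed reductions in your sketch (passing from isometric extensions $X_\beta\times G/H$ to abelian group extensions; showing the fibers of a minimal joining over $X_\beta\times Y$ are continuously varying cosets of a fixed closed subgroup). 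So the successor step is not a routine verification but the entire content of the theorem the paper cites, and as written your mechanism for extracting a common eigenvalue would fail.
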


\begin{proof}
    Combining \cite[Chapter 11, Theorem 7]{Auslander88} with \cite[Chapter 11, Proposition 2 (ii)]{Auslander88} it follows that $(X, T)$ and $(Y, S)$ are disjoint if and only if their maximal equicontinuous factors are disjoint.      By \cite[Chapter 11, Theorem 9]{Auslander88}, two equicontinuous systems are disjoint if and only if their product is transitive.  By \cite[Theorem 5]{Peleg72} (see also \cite{MR467704}), the product of two minimal systems is transitive if and only if they share no nontrivial eigenvalue.
    This proves that $(X,T)$ and $(Y,S)$ are disjoint if and only if their maximal equicontinuous factors share no eigenvalues in common.  The conclusion of the proposition now follows from the fact that the set of eigenvalues of a system coincides with the set of eigenvalues of its maximal equicontinuous factor, as was explained in \cref{sec_prelims_top_dyn}.
\end{proof}

\subsection{Eigenvalues of derived systems}

By the \emph{spectrum} of a system $(X,T)$ we mean its eigenvalues as a subgroup of $\R/\Z$.
The spectrum of $(X,T)$ is denoted $\sigma(X,T)$. 
Since $X$ is metrizable, the spectrum $\sigma(X, T)$ is countable.

\begin{lemma}
\label{lem:MEF_nilsystem_not_connected}
Let $(X, T)$ be a nilsystem with $k$-many connected components $X_0, X_1, \ldots, X_{k-1}$. The systems $(X_i, T^k)$ are pairwise isomorphic, and $\sigma(X, T) = \sigma(X_i, T^k)/k := \{\theta \in \R/\Z: k \theta \in \sigma(X_i, T^k)\}$.
\end{lemma}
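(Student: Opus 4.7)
The plan is to split the lemma into three parts: understanding how $T$ acts on the components, deducing the pairwise isomorphism, and then proving the spectrum identity. Throughout, I would take $(X,T)$ to be minimal, as is standard for the nilsystems appearing here. Since $T$ is a homeomorphism of $X$, it permutes the finite set $\{X_0,\ldots,X_{k-1}\}$ of connected components, and minimality forbids any proper union of components from being $T$-invariant, so the induced permutation is a single $k$-cycle. After relabeling, we may assume $TX_i = X_{i+1 \bmod k}$, whence each $X_i$ is $T^k$-invariant. The pairwise isomorphism is then immediate: $T|_{X_i}\colon X_i\to X_{i+1}$ is a homeomorphism intertwining $T^k|_{X_i}$ with $T^k|_{X_{i+1}}$, so it is an isomorphism of dynamical systems, and composing gives $(X_i,T^k)\cong (X_j,T^k)$ for every $i,j$.

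For the spectrum identity, the inclusion $\sigma(X,T)\subseteq \sigma(X_i,T^k)/k$ is short. If $f\in C(X)$ satisfies $f\circ T=e^{2\pi i\theta}f$, then $f\circ T^k=e^{2\pi i k\theta}f$, so $f|_{X_i}$ is a continuous eigenfunction of $(X_i,T^k)$ with eigenvalue $k\theta$; it is nonvanishing because eigenfunctions of transitive systems have constant modulus (as noted in \cref{sec_prelims_top_dyn}). Thus $k\theta\in\sigma(X_i,T^k)$. The opposite inclusion is where the real work is: given $\theta\in\R/\Z$ with $k\theta\in \sigma(X_0,T^k)$, I would fix a continuous $g_0\colon X_0\to\C$ satisfying $g_0\circ T^k=e^{2\pi i k\theta}g_0$ and define $f\colon X\to\C$ component-by-component by $f|_{X_j}(y)\defeq e^{2\pi i j\theta}\,g_0(T^{-j}y)$ for $y\in X_j$ and $j=0,1,\ldots,k-1$. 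Continuity of $f$ is automatic, since the $X_j$ are clopen.

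To finish I would verify $f\circ T=e^{2\pi i\theta}f$. For $y\in X_j$ with $j<k-1$, both sides immediately evaluate to $e^{2\pi i(j+1)\theta}g_0(T^{-j}y)$. The only nontrivial case is the wraparound $y\in X_{k-1}$, $Ty\in X_0$: writing $y=T^{k-1}y_0$ with $y_0\in X_0$, one has $f(Ty)=g_0(T^k y_0)=e^{2\pi i k\theta}g_0(y_0)$ by the eigenvalue equation for $g_0$, while $e^{2\pi i\theta}f(y)=e^{2\pi i k\theta}g_0(y_0)$ by the definition of $f$ on $X_{k-1}$. This produces the desired global eigenfunction and completes the proof. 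The main obstacle is precisely this lifting step: one must find the right phase factor $e^{2\pi i j\theta}$ on each $X_j$ so that (i) the wraparound consumes exactly one application of $g_0\circ T^k=e^{2\pi i k\theta}g_0$, and (ii) the choice of base component $X_0$ is immaterial (which follows from the isomorphism $(X_0,T^k)\cong(X_i,T^k)$ established earlier).
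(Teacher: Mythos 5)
Your proof is correct and follows essentially the same route as the paper: the paper identifies $(X,T)$ with the tower $(X_0\times\Z/k\Z,S)$ and defines the lifted eigenfunction by $f'(x,n)=e^{2\pi i n\theta}f(x)$, which under the identification $(x,n)\leftrightarrow T^n x$ is exactly your component-by-component formula $f|_{X_j}(y)=e^{2\pi i j\theta}g_0(T^{-j}y)$. The restriction argument for the forward inclusion and the wraparound verification are likewise the same computation.
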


\begin{proof}
For the first claim, observe that for $i, j$, $(X_i, T^k)$ is isomorphic to $(X_j, T^k)$ through the isomorphism $T^{j-i}$.

Suppose $T X_i = X_{i+1 \bmod k}$ and note that $(X, T)$ is isomorphic to $(X_0 \times \Z/k\Z, S)$ where $S: X_0 \times \Z/k\Z \to X_0 \times \Z/k\Z$ is defined by
\[
    S(x, n) = \begin{cases}
        (x, n + 1) & \text{ if } 0 \leq n \leq k - 2, \\
        (T^k x, 0) & \text{ if } n = k - 1.
    \end{cases}
\]
If $f:X_0\times\Z/k\Z\to\C$ is a continuous eigenfunction with $f \circ S = e^{2\pi i\theta} f$, then the restriction $f'=f|_{X_0}$ to $X_0$ is an eigenfunction for $(X_0,T^k)$ with eigenvalue $k\theta$, showing that $\sigma(X,T) \subseteq \sigma(X_0,T^k)/k$.

Conversely, if $\theta\in\sigma(X_0,T^k)/k$ and $f:X_0\to\C$ is a continuous eigenfunction of $(X_0,T^k)$ with eigenvalue $k\theta$, the function $f':X_0\times(\Z/k\Z)\to\C$ given by $f'(x,n)=e^{2\pi i n\theta}f(x)$ satisfies $f' \circ S = e^{2\pi i \theta}f'$, showing that $\sigma(X_0,T^k)/k \subseteq \sigma(X,T)$.
\end{proof}

\begin{lemma}\label{lem:same-mef}
    Let $(X, T)$ and $(Y,S)$ be two minimal systems that do not share any nontrivial eigenvalues, and suppose $(X,T)$ is a nilsystem.
    Let $d \in \N$ and $\tilde{T} = I \times T \times T^2 \times \cdots \times T^d$.
For $x \in X$, define
\begin{align}
\label{eqn_def_of_zx}
    Z_x : = \overline{o}_{\tilde{T}}(x,\ldots,x) = \overline{\{\tilde{T}^n(x, \ldots, x): n \in \Z\}} \subseteq X^{d+1}.
\end{align}
For almost every $x \in X$ with respect to the Haar measure on $X$, the system $(Z_x, \tilde{T})$ does not share any nontrivial eigenvalues with $(Y,S)$.
\end{lemma}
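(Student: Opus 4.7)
The plan is to isolate a single problematic eigenvalue at a time and use the ergodicity of Haar measure together with the nilsystem structure to rule it out. Since $Y$ is metrizable, $\sigma(Y,S)$ is countable, so it suffices to fix a nontrivial $\theta \in \sigma(Y,S)$ and show that the set $A_\theta := \{x \in X : \theta \in \sigma(Z_x, \tilde T)\}$ has zero Haar measure. Combined with the fact that $(Z_x, \tilde T)$ is distal as a minimal subsystem of the nilsystem $(X^{d+1}, \tilde T)$, together with Proposition~\ref{lem:disjoint-mef}, this will give that $(Z_x, \tilde T)$ and $(Y,S)$ share no nontrivial eigenvalues for Haar-a.e. $x$, as desired.

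For the ergodicity step, note that $T^{\times(d+1)} := T \times T \times \cdots \times T$ on $X^{d+1}$ commutes with $\tilde T$ and sends $Z_x$ homeomorphically onto $Z_{Tx}$, yielding a topological conjugacy $(Z_x, \tilde T) \cong (Z_{Tx}, \tilde T)$ and in particular the equality $\sigma(Z_x, \tilde T) = \sigma(Z_{Tx}, \tilde T)$; hence $A_\theta$ is $T$-invariant. Since $(X,T)$ is a minimal nilsystem, it is uniquely ergodic, so the Haar measure is $T$-ergodic and $A_\theta$ has Haar measure $0$ or $1$. Assume for contradiction that it has measure $1$, i.e., $\theta \in \sigma(Z_x, \tilde T)$ for Haar-a.e. $x$.

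For the structural step, write $X = G/\Gamma$, $T = L_a$, and let $b := (e, a, a^2, \ldots, a^d) \in G^{d+1}$ so that $\tilde T = L_b$ on $(X^{d+1}, \tilde T) = (G^{d+1}/\Gamma^{d+1}, L_b)$. The key observation is that $H := \overline{\langle b\rangle} \leq G^{d+1}$ is a closed \emph{abelian} subgroup, being the closure of a cyclic subgroup, so $Z_x = H \cdot (x,\ldots,x)$ is (topologically conjugate to) a compact abelian group on which $\tilde T$ acts by rotation; in particular $(Z_x, \tilde T)$ is equicontinuous. Pushing forward via the maximal equicontinuous factor $\pi^{d+1} \colon X^{d+1} \to X_1^{d+1}$ of the ambient nilsystem --- where $(X_1, R_\alpha)$ is the m.e.f. of $(X,T)$ --- one sees that $\pi^{d+1}(Z_x)$ is a translate of the closed subgroup $K := \overline{\langle (\alpha, 2\alpha, \ldots, d\alpha) \rangle} \leq X_1^d$, and a direct character computation gives $\sigma(K, R_{(\alpha,\ldots,d\alpha)}) = \sigma(X,T)$: any character $(\chi_1,\ldots,\chi_d) \in \widehat{X_1}^d$ evaluates on $(\alpha, 2\alpha, \ldots, d\alpha)$ to $(\chi_1 + 2\chi_2 + \cdots + d\chi_d)(\alpha)$, and as the $\chi_i$ vary, $\chi_1 + 2\chi_2 + \cdots + d\chi_d$ ranges over all of $\widehat{X_1}$.

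The main technical step --- and the hardest part of the argument --- is to show that for Haar-a.e. $x$, the factor map $\pi^{d+1}|_{Z_x} \colon Z_x \to \pi^{d+1}(Z_x)$ is an \emph{isomorphism} of equicontinuous systems, so that $\sigma(Z_x, \tilde T) = \sigma(K, R_{(\alpha,\ldots,d\alpha)}) = \sigma(X,T)$. Since $\theta \notin \sigma(X,T)$ by the hypothesis $\sigma(X,T) \cap \sigma(Y,S) = \{0\}$, this contradicts $\theta \in \sigma(Z_x, \tilde T)$ for a.e. $x$ and completes the proof. The remaining assertion, that $\pi^{d+1}|_{Z_x}$ is an isomorphism for generic $x$, relies on the structure theory of polynomial orbits in nilsystems in the spirit of Leibman, which controls how $Z_x$ sits inside $X^{d+1}$ and rules out extra eigenvalues arising from the non-abelian higher-step structure of $G$ (for non-generic $x$, the orbit closure $Z_x$ could swallow a fiber of $\pi^{d+1}$, introducing eigenvalues not detected by the m.e.f.; this exceptional set is Haar-null).
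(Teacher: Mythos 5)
Your reduction to a single nontrivial $\theta\in\sigma(Y,S)$ via countability is exactly how the paper starts, and the observation that $A_\theta$ is $T$-invariant (via the conjugacy $T^{\times(d+1)}\colon Z_x\to Z_{Tx}$) together with unique ergodicity is a legitimate, nice extra step. However, the structural core of your argument is false. The orbit closure $Z_x$ is \emph{not} equal to $\overline{\langle b\rangle}\cdot(x,\ldots,x)$: while the closure of the cyclic group $\langle b\rangle$ in $G^{d+1}$ is indeed abelian, the closure of the orbit $\{b^n\cdot(x,\ldots,x)\Gamma^{d+1}\}$ in the nilmanifold $G^{d+1}/\Gamma^{d+1}$ is of the form $F\cdot(x,\ldots,x)\Gamma^{d+1}$ for a closed subgroup $F$ that is in general strictly larger than $\overline{\langle b\rangle}$ and non-abelian; it must be saturated by commutator directions for the set to be closed. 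Consequently $(Z_x,\tilde T)$ is generically a higher-step nilsystem, not equicontinuous, and the ``main technical step'' you defer --- that $\pi^{d+1}|_{Z_x}$ is an isomorphism onto its image in the maximal equicontinuous factor for a.e.\ $x$ --- is not merely unproven but false. The paper's own \cref{prop_max_equicont_factor_does_not_suffice} is a counterexample: for the $2$-step system $T(x,y)=(x+\alpha,y+x)$ on $\T^2$ with $d=2$, membership of $n$ in $R(U,U,U)$ forces $\lVert n^2\alpha\rVert<\epsilon$, a genuinely quadratic constraint invisible to the Kronecker factor; if $(Z_x,\tilde T)$ were equicontinuous for a.e.\ $x$, such return-time sets would be controlled by Bohr sets, which they are not. (Already for $d=1$ one has $Z_x=\{x\}\times X$ and $(Z_x,\tilde T)\cong(X,T)$, which is not equicontinuous when $X$ is a genuine $2$-step nilsystem.)

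The intended conclusion $\sigma(Z_x,\tilde T)\subseteq\sigma(X,T)$ for a.e.\ $x$ may well be what is true, but it cannot be reached through equicontinuity of $Z_x$; it requires controlling the eigenvalues of the higher-step nilsystem $(Z_x,\tilde T)$ directly. This is precisely what the paper imports as a black box: the revised Theorem~7.1 of Moreira--Richter states that for any $\theta\notin\sigma(X,T)$ and a.e.\ $x$, one has $\theta\notin\sigma(Z_x,\tilde T)$, for $X$ connected. The paper then intersects over the countably many nontrivial $\theta\in\sigma(Y,S)$ (which all avoid $\sigma(X,T)$ by hypothesis), and handles disconnected $X$ by passing to the totally minimal system $(X_0,T^k)$ on a connected component and relating $\sigma(Z_x,\tilde T)$ to $\sigma(Z_{x,0},\tilde T^k)/k$ via \cref{lem:MEF_nilsystem_not_connected} --- a reduction your proposal does not address. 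To repair your argument you would need to replace the abelian-group step by this (or an equivalent) result on eigenvalues of diagonal orbit closures in nilsystems.
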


\begin{proof}
First, assume $X$ is connected. Revised Theorem~7.1 in \cite{Moreira_Richter-spectral} states that for any $\theta$ which is not an eigenvalue of $(X, T)$, for almost every $x \in X$, $\theta$ is not an eigenvalue for $(Z_x, \tilde{T})$. For each $\theta \in \R/\Z$, let 
\[
    A_{\theta} = \{x \in X: \theta \not \in \sigma(Z_x, \tilde{T})\};
\]
and define
\[
    A = \bigcap_{\theta \in \sigma(Y, S) \setminus \sigma(X, T)} A_{\theta} = \bigcap_{\theta \in \sigma(Y, S) \setminus \{0\}} A_{\theta}.
\]
Since $\sigma(Y, S)$ is at most countable, $A \subseteq X$ has full Haar measure. Our lemma now follows because for each $x \in A$, $\sigma(Z_x, \tilde{T}) \cap \sigma(Y, S) = \{0\}$.

Now suppose that $(X, T)$ is a minimal nilsystem with $k$ connected components $X_0, X_1, \ldots, X_{k-1}$ satisfying $T X_i = X_{i + 1 \bmod k}$. For each $i$, the system $(X_i, T^k)$ is a connected minimal nilsystem and so $\sigma(X_i, T^k)$ does not contain a nonzero rational eigenvalue. Moreover, by \cref{lem:MEF_nilsystem_not_connected}, $\sigma(X, T) = \sigma(X_i, T^k)/k$ and so $(X_i, T^k)$ does not share any nontrivial eigenvalue with $(Y, S)$.

By the proof for the case of connected $(X, T)$, there exists a set $B_0 \subseteq X_0$ of full measure (with respect to $X_0$) such that for all $x \in B_0$, the system $(Z_{x,0} := \overline{\{\tilde{T}^{kn}(x, \ldots, x): n \in \Z\}}, \tilde{T}^k)$ does not have any nonzero rational eigenvalue (and so $Z_{x,0}$ is connected) and does not share any nontrivial eigenvalue with $(Y, S)$. 

For all $x \in X_0$, we have
\[
    Z_x := \overline{\{\tilde{T}^n(x, \ldots, x): n \in \Z\}} = Z_{x, 0} \cup \tilde{T} Z_{x, 0} \cup \cdots \cup \tilde{T}^{k-1} Z_{x, 0}.
\]
Therefore, for all $x \in B_0$, $Z_x$ has $k$ connected components $Z_{x, 0}, \tilde{T} Z_{x, 0}, \ldots, \tilde{T}^{k-1} Z_{x, 0}$. By \cref{lem:MEF_nilsystem_not_connected} 
\begin{equation}\label{eq:divide_sigma}
    \sigma(Z_x, \tilde{T}) = \sigma(Z_{x,0}, \tilde{T}^k)/k.
\end{equation}

Let $\gamma$ be an arbitrary element in $\sigma(Z_x, \tilde{T}) \cap \sigma(Y, S)$. By \eqref{eq:divide_sigma}, $k \gamma = \beta$ for some $\beta \in \sigma(Z_{x,0}, \tilde{T}^k)$. Since $\sigma(Y, S)$ is a group, $\beta = k \gamma \in \sigma(Y, S)$.
Because
\[
    \sigma(Z_{x,0}, \tilde{T}^k) \cap \sigma(Y, S) = \{0\},
\]
we have $\beta = 0$. In other words, $k \gamma = 0 \in \sigma(X_i, T^k)$ (as $0$ is an eigenvalue for every system), and so $\gamma \in \sigma(X, T)$. Since $(X, T)$ does not share any nontrivial eigenvalue with $(Y, S)$, $\gamma = 0$. It follows that for $x \in B_0$,
\begin{equation}\label{eq:trivial_eigenspaces}
    \sigma(Z_x, \tilde{T}) \cap \sigma(Y, S) = \{0\}.
\end{equation}

Let $B = B_0 \cup T B_0 \cup \cdots \cup T^{k-1} B_0$. Then $B$ is a set of full measure in $X$. Suppose $x = T^i x_0$ be an arbitrary element in $B$ with $x_0 \in B_0$. Then $(Z_x, \tilde{T}^k)$ is isomorphic to $(Z_{x_0}, \tilde{T}^k)$ through the map $\tilde{T}^{-i}$. As a result, \eqref{eq:trivial_eigenspaces} holds for every $x \in B$ and so our lemma is proved.
\end{proof}

\subsection{Proof of \texorpdfstring{\cref{Mainthm:odd-recurrence-pol}}{Theorem B}}
\label{sec_proof_of_thm_b}

\begin{lemma}
\label{lem:odd-rec-pol-nilsystems}
Let $(X, T)$ and $(Y, S)$ be minimal systems. Assume further that $(X, T)$ is a nilsystem and the two systems do not share any nontrivial eigenvalues.
For all nonempty, open sets $U \subseteq X, V \subseteq Y$, and every $y \in Y$ (not necessarily in $V$), the set
\[\big\{ n \in \Z : \ U \cap T^{-n} U \cap \cdots \cap T^{-dn} U \neq \emptyset \big\} \cap R_S(y, V)\]
is dynamically syndetic.
\end{lemma}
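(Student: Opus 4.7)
The plan is to realize the target intersection as containing a set of visit times of a single point to a nonempty open set in a minimal system, which is dynamically syndetic by definition. Write $\tilde T = I \times T \times T^2 \times \cdots \times T^d$ acting on $X^{d+1}$, and for $x \in X$ let $Z_x = \overline{o}_{\tilde T}(x,\ldots,x) \subseteq X^{d+1}$ as in \eqref{eqn_def_of_zx}. The key observation is that any $n$ for which $\tilde T^n(x,\ldots,x) \in U^{d+1}$ witnesses $x \in U \cap T^{-n}U \cap \cdots \cap T^{-dn}U$, so it suffices to exhibit a single $x$ producing many such $n$ simultaneously with $S^n y \in V$.

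First I would pick a good basepoint $x_* \in U$. Because $(X,T)$ is a minimal nilsystem, its Haar measure has full support, so $U$ has positive Haar measure and meets the full-measure set provided by \cref{lem:same-mef}. Choose $x_*$ in that intersection, so that $(Z_{x_*}, \tilde T)$ shares no nontrivial eigenvalue with $(Y,S)$.

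Next I would verify that $(Z_{x_*}, \tilde T)$ is a minimal distal system and combine this with \cref{lem:disjoint-mef}. If $X = G/\Gamma$ and $T$ is left translation by $a$, then $\tilde T$ is left translation by $(e, a, a^2, \ldots, a^d)$ on $(G/\Gamma)^{d+1}$, so $(X^{d+1}, \tilde T)$ is itself a nilsystem and hence distal; therefore the orbit closure $Z_{x_*}$ of any single point is minimal. By \cref{lem:disjoint-mef}, the minimal distal system $(Z_{x_*}, \tilde T)$ and the minimal system $(Y,S)$ are disjoint. Consequently the product $(Z_{x_*} \times Y, \tilde T \times S)$ is minimal, since any minimal subsystem of the product projects onto both factors and is therefore a joining, which must equal the full product by disjointness.

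Finally, since $x_* \in U$, the set $(U^{d+1} \cap Z_{x_*}) \times V$ is a nonempty open subset of $Z_{x_*} \times Y$, so by the definition of dynamical syndeticity the set
\[
    R_{\tilde T \times S}\bigl(((x_*,\ldots,x_*), y),\ (U^{d+1} \cap Z_{x_*}) \times V\bigr)
\]
is dynamically syndetic. Every $n$ in this set satisfies $T^{in} x_* \in U$ for $i = 0, \ldots, d$ and $S^n y \in V$, so it lies in the target intersection, which is therefore itself dynamically syndetic. The only step requiring any thought is lifting "no common nontrivial eigenvalues" for $(Z_{x_*}, \tilde T)$ and $(Y,S)$ to actual disjointness; this is precisely where the nilsystem hypothesis on $X$ enters, supplying the distality needed to invoke \cref{lem:disjoint-mef}.
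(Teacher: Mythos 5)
Your proposal is correct and follows essentially the same route as the paper: choose a generic basepoint in $U$ via \cref{lem:same-mef}, use distality of nilsystems and \cref{lem:disjoint-mef} to get disjointness of $(Z_{x_*},\tilde T)$ from $(Y,S)$, conclude the product is minimal, and realize the target set as containing the visit-time set of $((x_*,\ldots,x_*),y)$ to $(U^{d+1}\cap Z_{x_*})\times V$. The extra detail you supply on why $Z_{x_*}$ is minimal and why disjointness forces product minimality is accurate but not a departure from the paper's argument.
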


\begin{proof}
Because the Haar measure on $X$ has full support, by \cref{lem:same-mef}, there exists $x \in U$ such that the minimal nilsystem $(Z_x, \tilde T)$ described in \eqref{eqn_def_of_zx} and $(Y, S)$ do not share any nontrivial eigenvalues. Since nilsystems are distal, by \cref{lem:disjoint-mef}, the systems $(Z_x, \tilde T)$ and $(Y, S)$ are disjoint, and so their product $(Z_x \times Y, \tilde{T} \times S)$ is minimal.

Let $x_1 = (x, \ldots, x)$ and $W = U \times \cdots \times U$.
Since $W$ is a neighborhood of $x_1$ in $Z_x$, the set $(W \times V) \cap (Z_x \times Y)$ is a nonempty, open subset of $Z_x \times Y$. We then have
\[
     \big\{ n \in \Z : \ U \cap T^{-n} U \cap \cdots \cap T^{-dn} U \neq \emptyset \big\} \cap R_S(y, V) \supseteq R_{\tilde{T} \times S}\big((x_1, y), W \times V \big)
\]
which is a dynamically syndetic set, as desired.
\end{proof}

We are now ready to prove \cref{Mainthm:odd-recurrence-pol}, restated here for convenience.

\begin{named}{\cref{Mainthm:odd-recurrence-pol}}{}
Let $(X, T)$ and $(Y, S)$ be minimal systems that do not share any nontrivial eigenvalues.
For all nonempty, open sets $U \subseteq X$ and $V \subseteq Y$, all $y \in Y$ (not necessarily in $V$), and all $d \in \N$, the set
\[\big\{ n \in \Z: \ U \cap T^{-n} U \cap \cdots \cap T^{-dn} U \neq \emptyset \big\} \cap R_S(y,V)\]
is dynamically syndetic.
\end{named}

\begin{proof}[Proof of \cref{Mainthm:odd-recurrence-pol}]

Let $(X_{\infty}, T)$ be the $\infty$-step pronilfactor of $(X, T)$ and let $\pi: (X, T) \to (X_{\infty}, T)$ be the factor map. Let $U$ be a nonempty, open subset of $X$ and $q \in \Z[x]^{d+1}$ be the polynomial tuple $(0,x, 2x, \ldots, dx)$. \cref{mainthm_ps_structure_theory_pol} states that
\[
    R_q((\pi U)^{\circ}, \ldots, (\pi U)^{\circ}) \ \setminus \ R_q(U, \ldots, U) 
\]
is not piecewise syndetic. Therefore, the set
\[
    \big( R_q((\pi U)^{\circ}, \ldots, (\pi U)^{\circ}) \cap R_S(y, V) \big) \ \setminus  \ \big(R_q(U, \ldots, U) \cap R_S(y, V) \big)
\]
is not piecewise syndetic.
In view of \cref{lemma_pwssetdifference}, it remains to show that $R_q((\pi U)^{\circ}, \ldots, (\pi U)^{\circ}) \cap R_S(y, V)$ is dynamically syndetic.

By \cref{lemma_semiopen}, the map $\pi$ is semi-open, so the set $\pi U$ has nonempty interior.
As $X_\infty$ is an inverse limit of nilsystems, we can find (cf.\ \cref{rmk_open_set_in_an_inverse_limit}) a nilsystem factor $\phi : (X_\infty, T) \to (Z, T)$ and an open set $V \subseteq Y$ such that $\phi^{-1}(V) \subseteq (\pi U)^\circ$.
As a result,
\[
    R_q(V, \ldots, V) \subseteq R_q((\pi U)^{\circ}, \ldots, (\pi U)^{\circ}).
\]
Moreover, since $(Z, T)$ is a factor of $(X, T)$, the systems $(Z, T)$ and $(Y,S)$ do not share any nontrivial eigenvalues.
Now applying \cref{lem:odd-rec-pol-nilsystems} to the nilsystem $(Z, T)$ with the open set $V$, we arrive at the desired result.
\end{proof}

\subsection{Proof of \texorpdfstring{\cref{cor:pol_k_p}}{Theorem C}}
\label{sec_proof_of_theorem_b}

To prove \cref{cor:pol_k_p}, we first establish the result in nilsystems (\cref{lem:odd-rec-pol-nilsystems-ap}), and then we apply \cref{mainthm_ps_structure_theory_pol} to ``lift'' the result to arbitrary systems.
We begin with a few lemmas about polynomial orbits in nilsystems.

\begin{lemma}\label{lem:same-mef3}
    Let $(X, T)$ be a minimal nilsystem and for $d \in \N$, let $\tilde{T} = I \times T \times T^2 \times \cdots \times T^d$.
For $x \in X$, define
\[
    Z_x : = \overline{o}_{\tilde{T}}(x,\ldots,x) = \overline{\big\{\tilde{T}^n(x, \ldots, x): \ n \in \Z\big\}} \subseteq X^{d+1}.
\]
If $X$ is connected, then for almost every $x\in X$, the nilmanifold $Z_x$ is connected.
\end{lemma}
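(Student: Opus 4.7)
The plan is to characterize connectedness of minimal nilsystems spectrally and then apply the Moreira--Richter spectral transference theorem to each nonzero rational.

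First, I would record the following characterization: a minimal nilsystem $(Y, S)$ is connected if and only if $\sigma(Y, S) \cap \Q = \{0\}$. The ``if'' direction follows from \cref{lem:MEF_nilsystem_not_connected}: if $Y$ has $k > 1$ connected components, then $\sigma(Y, S) = \sigma(Y_0, S^k)/k$ contains $1/k \in \Q \setminus \{0\}$. For the ``only if'' direction, the maximal equicontinuous factor of a connected minimal nilsystem is a connected compact abelian group $K$; any continuous eigenfunction with rational eigenvalue $p/q$, raised to the $q$-th power, would give a continuous $S$-invariant function, which by minimality is constant, forcing the eigenfunction itself to be constant on the connected $K$. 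In particular, since $X$ is connected by hypothesis, $\sigma(X, T) \cap \Q = \{0\}$.

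Next, I would observe that $(Z_x, \tilde T)$ is a minimal nilsystem for every $x \in X$. Writing $X = G/\Gamma$ with nilrotation by $a$, the map $\tilde T$ is left translation by $b := (e, a, a^2, \ldots, a^d)$ on the nilmanifold $(G/\Gamma)^{d+1}$, so $(X^{d+1}, \tilde T)$ is itself a nilsystem and $Z_x$ is the orbit closure of $(x, \ldots, x)$ under this translation. Transitivity of $(Z_x, \tilde T)$ is immediate from the definition, and distality of nilsystems upgrades it to minimality; $Z_x$ inherits the structure of a subnilmanifold.

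Finally, I would invoke Revised Theorem~7.1 of \cite{Moreira_Richter-spectral} exactly as in the proof of \cref{lem:same-mef}: for each $\theta \notin \sigma(X, T)$, the set $N_\theta := \{x \in X : \theta \in \sigma(Z_x, \tilde T)\}$ has zero Haar measure. Since $\Q \setminus \{0\}$ is countable and disjoint from $\sigma(X, T)$, the set $N := \bigcup_{\theta \in \Q \setminus \{0\}} N_\theta$ is null. For every $x \notin N$, the minimal nilsystem $(Z_x, \tilde T)$ satisfies $\sigma(Z_x, \tilde T) \cap \Q = \{0\}$ and is therefore connected by the characterization. The main technical input is the Moreira--Richter spectral transference theorem; the remainder is just combining it with \cref{lem:MEF_nilsystem_not_connected} and the countability of $\Q$.
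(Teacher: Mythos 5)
Your proposal is correct and follows essentially the same route as the paper: deduce from connectedness of $X$ that $\sigma(X,T)$ contains no nonzero rationals, apply the Moreira--Richter spectral transference theorem to the countably many nonzero rationals to get a full-measure set of $x$ with $\sigma(Z_x,\tilde T)\cap\Q=\{0\}$, and conclude connectedness of $Z_x$. The only cosmetic difference is in the last step, where the paper passes through total minimality of $(Z_x,\tilde T)$ while you invoke the contrapositive of \cref{lem:MEF_nilsystem_not_connected}; these are interchangeable.
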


\begin{proof}
    Since $X$ is connected, the nilsystem $(X,T)$ has no nonzero rational eigenvalues. 
    \cite[Revised Theorem 7.1]{Moreira_Richter24} implies that for almost every $x$ the nilsystem $(Z_x,\tilde T)$ has no nonzero rational eigenvalue.
    This implies, for any such $x$, that $(Z_x,\tilde T)$ is totally minimal, and hence that $Z_x$ is connected.
\end{proof}

\begin{lemma}[{\cite[Proposition 2.7]{Frantzikinakis08}}]
\label{lem:frantzikinakis_connected}
Suppose that $X = G/\Gamma$ is a nilmanifold, $g: \N \to G$ is a polynomial sequence and $x \in X$ is such that $Y = \overline{\{g(n) x: \ n \in \Z\}}$ is connected. Then for any non-constant polynomial $p$ on $\Z$ we have $Y = \overline{\{g(p(n)) x: \ n \in \Z\}}$.
\end{lemma}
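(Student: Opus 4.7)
The plan is to prove this via Leibman's equidistribution theorem for polynomial sequences on nilmanifolds, reducing the claim to Weyl equidistribution on the horizontal torus of $Y$. Because $g \circ p$ is again a polynomial sequence in $G$, its orbit closure is a finite union of sub-nilmanifolds of $X$, and the inclusion $\overline{\{g(p(n))x : n \in \Z\}} \subseteq Y$ is automatic; the content of the lemma is therefore the reverse inclusion $Y \subseteq \overline{\{g(p(n))x : n \in \Z\}}$.

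First, I would invoke Leibman's theorem \cite{Leibman05} applied to $g$ itself: $Y$ is a finite union of sub-nilmanifolds, and $g(n) x$ equidistributes on $Y$ with respect to its natural measure. Under the connectedness hypothesis, $Y$ is a single connected sub-nilmanifold, which can be written as $Y = Hx$ for a connected closed subgroup $H \leq G$. Let $\pi \colon Y \to \T^k$ denote the projection onto the horizontal torus of $Y$; since $Y$ is connected, the target is a connected torus. The standard criterion from the same circle of results reduces equidistribution of a polynomial orbit in a connected sub-nilmanifold to equidistribution of its projection under $\pi$; applied to $g$, this gives that $\pi(g(n)x)$ equidistributes in $\T^k$.

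Second, write $\pi(g(n)x) = q(n) \bmod \Z^k$ with $q \in \R^k[n]$. By Weyl's criterion, for each non-zero character $\chi \in \Z^k$ of $\T^k$, the scalar polynomial $\chi \cdot q$ has at least one irrational non-constant coefficient. Let $j \geq 1$ be the largest index for which $\chi \cdot q_j$ is irrational. A direct computation of $\chi \cdot (q \circ p)$ shows that its coefficient of $n^{j \cdot \deg p}$ equals $(\text{leading coefficient of } p)^{j} \cdot (\chi \cdot q_j)$ plus a rational combination of the $\chi \cdot q_i$ with $i < j$, and is therefore irrational. Weyl then implies that $q(p(n)) \bmod \Z^k$ equidistributes in $\T^k$, and the horizontal-torus criterion applied in reverse shows that $g(p(n))x$ equidistributes in $Y$. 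In particular, the orbit closure equals $Y$.

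The main obstacle lies in the algebraic bookkeeping of the second step: carefully propagating irrationality of a specific non-constant coefficient of $\chi \cdot q$ through the composition with $p$ and verifying that it is not cancelled by lower-order contributions. Tracking the highest-degree irrational coefficient uniformly handles this. A minor secondary point is confirming that the horizontal torus of $Y$ is indeed a connected torus — guaranteed by the connectedness of $Y$ — so that Weyl's criterion applies in the clean $\T^k$ setting rather than in a disconnected compact abelian group; otherwise one would first need to pass to the identity component and reassemble at the end.
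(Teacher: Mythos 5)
The paper does not actually prove this lemma; it is imported verbatim from Frantzikinakis \cite[Proposition 2.7]{Frantzikinakis08}, so there is no in-paper argument to compare against. Your proposal reconstructs the standard proof from that source: reduce, via Leibman's structure and equidistribution theorems, to equidistribution on the horizontal torus of the connected sub-nilmanifold $Y$, and then verify Weyl's criterion for the composed polynomial. The strategy is sound, and choosing the \emph{largest} index $j\geq 1$ with $\chi\cdot q_j$ irrational is exactly the right device. One step is misstated, however: the coefficient of $n^{j\deg p}$ in $\chi\cdot q(p(n))=\sum_i(\chi\cdot q_i)\,p(n)^i$ receives contributions only from the terms with $i\geq j$, since $\deg p(n)^i=i\deg p<j\deg p$ for $i<j$; the indices $i<j$ contribute nothing. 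As literally written, ``a rational combination of the $\chi\cdot q_i$ with $i<j$'' need not be rational (those coefficients may well be irrational), so your stated justification for irrationality does not go through. The correct argument is that the extraneous contributions come from $i>j$, where $\chi\cdot q_i$ is rational by maximality of $j$ and $p$ has integer coefficients; hence the coefficient in question equals the irrational number $(\mathrm{lc}\,p)^{j}(\chi\cdot q_j)$ plus a rational number, and Weyl applies because $j\deg p\geq 1$. With that index corrected the argument is complete, modulo the standard (but not entirely trivial) bookkeeping needed to realize $g(n)x$ as a polynomial orbit in a connected closed subgroup $H$ with $Y=Hx$, after lifting to a connected, simply connected cover, so that the horizontal-torus criterion is applicable; you gesture at this but do not carry it out.
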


\begin{lemma}\label{lem:polynomial_in_nil_dynamical_syndetic}
Let $(X, T)$ be a nilsystem and $p$ be a non-constant polynomial on $\Z$. For $x \in X$, and a nonempty open set $U \subseteq X$, if $\{n \in \Z: T^{p(n)} x \in U\}$ is nonempty, it is a dynamically syndetic set.
\end{lemma}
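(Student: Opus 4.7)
The plan is to realize $\{n \in \Z : T^{p(n)} x \in U\}$ as a set of return times $R_{\tilde T}(\tilde x, W)$ of a single point $\tilde x$ to a nonempty open set $W$ in a minimal nilsystem $(Z, \tilde T)$; dynamical syndeticity then follows immediately from the definition recalled in \cref{sec_linear_rec_along_dy_synd_sets}.

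The critical step is the classical \emph{linearization} of polynomial orbits in nilpotent Lie groups. Writing $X = G/\Gamma$ with $T : y \mapsto ay$ for some $a \in G$, one constructs a nilmanifold $\tilde X = \tilde G / \tilde \Gamma$, an element $b \in \tilde G$, a point $\tilde x \in \tilde X$, and a continuous map $\phi : \tilde X \to X$ satisfying $\phi(b^n \tilde x) = T^{p(n)} x$ for every $n \in \Z$. This is standard (Furstenberg's original argument for $p(n) = n^2$, extended by Leibman to general polynomial sequences): using the finite-difference expansion of $p$, one writes $n \mapsto a^{p(n)}$ as the first coordinate of a linear orbit of a unipotent element of a product of copies of $G$, and then passes to a suitable quotient nilmanifold. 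Setting $Z := \overline{\{b^n \tilde x : n \in \Z\}}$ and $\tilde T : z \mapsto bz$, the pair $(Z, \tilde T)$ is a minimal subnilsystem containing $\tilde x$.

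Let $W := \phi^{-1}(U) \cap Z$; continuity of $\phi$ ensures that $W$ is open in $Z$, and choosing any $n_0$ with $T^{p(n_0)} x \in U$ (which exists by hypothesis) gives $\tilde T^{n_0} \tilde x \in W$, so $W$ is nonempty. The identity
\[
\{n \in \Z : T^{p(n)} x \in U\} \;=\; \{n \in \Z : \tilde T^n \tilde x \in W\} \;=\; R_{\tilde T}(\tilde x, W)
\]
then exhibits the left-hand side as dynamically syndetic, completing the proof. The principal obstacle is invoking the linearization correctly; everything else is routine verification, and no results from earlier in the paper are needed for this lemma.
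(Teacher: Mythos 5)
Your proposal is correct and takes essentially the same approach as the paper: the paper implements the linearization you invoke via Leibman's results (\cite[Propositions 3.9 and 3.14]{Leibman05a}), realizing $n \mapsto a^{p(n)}x$ as a linear orbit $\hat{\tau}^n(c\tilde{x})$ in a nilsystem built from a unipotent automorphism, and then concludes exactly as you do by pulling $U$ back to an open set and using that orbit closures in nilsystems are minimal.
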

\begin{remark}
As an illustration for this lemma, let $(X, T)$ be the rotation by $\alpha$ (rational or irrational) on $1$-dimension torus $\T$ and $p(n) = n^2$. 
Consider the new system $(Y, S)$ with $Y = \T \times \T$ and $S(x, y) = (x + \alpha, y + 2x + \alpha)$. Then
\[
    S^n(0, 0) = (n \alpha, n^2 \alpha)
\]
and so for any open set $U \subseteq \T$,
\[
     \{n \in \Z: T^n(0) \in U\} = \{n \in \Z: S^n(0, 0) \in \T \times U\}
\]
which is a dynamically syndetic set if it is nonempty.
\end{remark}
\begin{proof}
Suppose $X = G/\Gamma$ where $G$ a nilpotent Lie group and $\Gamma$ is a closed, discrete, cocompact subgroup of $G$. Write $T(x) = ax$  for some $a \in G$. Let $\pi: G \to X$ be the projection map and let $x$ be as in the lemma's statement.

Without loss of generality, we will assume $x = \pi(1_G)$ where $1_G$ is the identity element of $G$. Indeed, suppose $x = h \Gamma$. Define $\Gamma_x = h \Gamma h^{-1}$. Then $\phi: X \to G/\Gamma_x$ defined by $\phi(y) = hy$ is a homeomorphism between $X$ and $G/\Gamma_x$ and under this map $\pi(1_G) \mapsto h \pi(1_G) = x$.

By \cite[Proposition 3.14]{Leibman05a}, there exist a nilpotent Lie group $\tilde{G}$ with a discrete closed cocompact subgroup $\tilde{\Gamma}$, a surjection $\eta: \tilde{G} \to G$ with $\eta(\tilde{\Gamma}) \subseteq \Gamma$, a unipotent automorphism $\tau$ of $\tilde{G}$ with $\tau(\tilde{\Gamma}) = \tilde{\Gamma}$, and an element $c \in \tilde{G}$ such that $a^{p(n)} = \eta(\tau^n(c))$ for all $n \in \Z$.

Let $\tilde{X} = \tilde{G}/\tilde{\Gamma}$. The map $\eta: \tilde{G} \to G$ factors to $\eta: \tilde{X} \to X$, so that if $\tilde{\pi}: \tilde{G} \to \tilde{X}$ is the projection map, then $\pi \circ \eta = \eta \circ \tilde{\pi}$. Letting $\tilde{x} = \tilde{\pi}(1_{\tilde{G}})$, we have $\eta(\tau^n (c \tilde{x})) = a^{p(n)} x$ for $n \in \Z$. As a result,
\[
    \{n \in \Z: a^{p(n)} x \in U\} = \{n \in \Z: \tau^n (c \tilde{x}) \in \eta^{-1}(U)\}.
\]
Note that the left hand side is nonempty according to our lemma's assumption.

Let $\hat{G}$ be the extension of $\tilde{G}$ by $\tau$. By \cite[Proposition 3.9]{Leibman05a}, $\hat{G}$ is a nilpotent Lie group. Let $\hat{\tau}$ be the element in $\hat{G}$ representing $\tau$ in the sense that $\tau(h) = \hat{\tau} h \hat{\tau}^{-1}$ for any $h \in \tilde{G}$. Let $\hat{\Gamma} = \langle \tilde{\Gamma}, \hat{\tau} \rangle \subseteq \hat{G}$. Since $\tau(\tilde{\Gamma}) = \tilde{\Gamma}$, we have $\hat{\Gamma} \cap \tilde{G} = \tilde{\Gamma}$, and so $\hat{\Gamma}$ is a discrete subgroup of $\hat{G}$ and $\tilde{X} = \hat{G}/\hat{\Gamma}$. Therefore, as a transformation on $\tilde{X}$, $\hat{\tau}$ is the same as $\tau$, i.e., for any $h \in \tilde{G}$ and $y = h \hat{\Gamma} \in \tilde{X}$, we have 
\[
    \tau(y) = \tau(h) \hat{\Gamma} = \hat{\tau} h \hat{\tau}^{-1} \hat{\Gamma} = \hat{\tau} h \hat{\Gamma} = \hat{\tau} y.
\]
Thus,
\[
    \{n \in \Z: \tau^n (c \tilde{x}) \in \eta^{-1}(U)\} = \{n \in \Z: \hat{\tau}^n (c \tilde{x}) \in \eta^{-1}(U)\}.
\]
Since $(\tilde{X}= \hat{G}/\hat{\Gamma}, \hat{\tau})$ is a nilsystem and the set in the right hand side is nonempty, this set is a dynamically syndetic set, as was to be shown.
\end{proof}

The next lemma is a special case of Theorems \ref{cor:pol_k_p} and \ref{cor:totally_minimal} and an important ingredient in the proofs of both.

\begin{lemma}\label{lem:totall_minimal_nilsystems}
    Let $(X,T)$ be a totally minimal nilsystem. For all $d \in \N$, all open $\emptyset\neq U\subseteq X$, and all non-constant polynomial $p$ on $\Z$, the set $R_{(0,p,2p,\ldots,dp)} (U, \ldots, U)$ is dynamically syndetic.
\end{lemma}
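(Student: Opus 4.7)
The plan is to reduce to an application of \cref{lem:polynomial_in_nil_dynamical_syndetic} inside the sub-nilsystem generated by a well-chosen diagonal point. Set $\tilde T := I \times T \times T^2 \times \cdots \times T^d$ on $X^{d+1}$ and, for $x \in X$, write $x_1 := (x, \ldots, x)$ and $Z_x := \overline{o}_{\tilde T}(x_1)$. I would first observe that a totally minimal nilsystem $(X,T)$ must be connected: if $X$ had $k \geq 2$ connected components (cyclically permuted by $T$), then $(X, T^k)$ would preserve each component and fail to be minimal. Consequently, the Haar measure on $X$ has full support, so $U$ has positive Haar measure, and \cref{lem:same-mef3} lets me pick some $x \in U$ for which $Z_x$ is connected.

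Next, writing $T(y) = ay$ for the generator $a$ of the ambient nilpotent Lie group, the sequence $n \mapsto \tilde T^n = (1, a^n, a^{2n}, \ldots, a^{dn})$ is polynomial, so I would apply \cref{lem:frantzikinakis_connected} to the polynomial reparametrization $n \mapsto \tilde T^{p(n)}$ and the connected nilmanifold $Z_x$ to get
\[
    \overline{\{\tilde T^{p(n)} x_1 : n \in \Z\}} = Z_x.
\]
Since $(Z_x, \tilde T)$ is itself a nilsystem (orbit closures in nilsystems are sub-nilmanifolds) and $x_1 \in U^{d+1} \cap Z_x$ lies in a nonempty open subset of $Z_x$ (with $n = 0$ already certifying nonemptiness of the return set), applying \cref{lem:polynomial_in_nil_dynamical_syndetic} within $(Z_x, \tilde T)$ yields that
\[
    A := \{n \in \Z : \tilde T^{p(n)} x_1 \in U^{d+1}\}
\]
is dynamically syndetic.

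Finally, unpacking $\tilde T^{p(n)} x_1 = (x, T^{p(n)} x, T^{2p(n)} x, \ldots, T^{dp(n)} x)$, each $n \in A$ witnesses $x \in U \cap T^{-p(n)} U \cap \cdots \cap T^{-dp(n)} U$, so $A \subseteq R_{(0, p, 2p, \ldots, dp)}(U, \ldots, U)$; since supersets of dynamically syndetic sets are dynamically syndetic, we are done. The argument is essentially a three-step chain of the preceding lemmas, so I anticipate no serious obstacle. The only preparatory observation not immediate from their statements is extracting connectedness of $X$ from total minimality so that \cref{lem:same-mef3} is applicable; the key conceptual point is that on a connected nilmanifold, the polynomial orbit of $x_1$ under $\tilde T$ is dense in the same set as the linear orbit, so recurrence for the polynomial orbit inherits the syndetic structure available for linear orbits in nilsystems.
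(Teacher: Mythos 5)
Your proposal is correct and follows essentially the same route as the paper's proof: choose $x \in U$ with $Z_x$ connected via \cref{lem:same-mef3}, use \cref{lem:frantzikinakis_connected} to see the polynomial orbit of the diagonal point is dense (hence the return set is nonempty), and then invoke \cref{lem:polynomial_in_nil_dynamical_syndetic} to upgrade nonemptiness to dynamical syndeticity. Your explicit preliminary observation that total minimality forces $X$ to be connected (so that \cref{lem:same-mef3} applies) is a point the paper leaves implicit, and it is correct.
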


\begin{proof}
Denote 
\[
    \tilde{T} = I \times T \times \cdots \times T^d.
\]
Given a point $x_0 \in X$, let  
\[
    x_1 = (x_0, \ldots, x_0)
\]
 and let $\tilde{Z}$ be the orbit closure of $x_1$ under the map $\tilde{T}$. Then $(\tilde{Z}, \tilde{T})$ is a minimal nilsystem. 
By \cref{lem:same-mef3}, we can choose $x_0 \in U$ such that $\tilde{Z}$ is connected. Note that for this $x_0$, we also have $\tilde{Z} \cap U^{d+1}$ is a nonempty open subset of $\tilde{Z}$. 

By \cref{lem:frantzikinakis_connected}, the sequence $\big(\tilde{T}^{p(n)} x_1\big)_{n\in\Z}$ is dense in $\tilde{Z}$, and so $\{n \in \Z: \tilde{T}^{p(n)}(x_1) \in U^{d+1}\}$ is nonempty. By \cref{lem:polynomial_in_nil_dynamical_syndetic}, the set $\{n \in \Z: \tilde{T}(x_1) \in U^{d+1}\}$ is dynamically syndetic. For each
$n$ in this set, the intersection
$$U\cap T^{-p(n)}U\cap\cdots\cap T^{-dp(n)}U$$
contains $x_0$ and is thus nonempty.
\end{proof}

\begin{lemma}
\label{lem:odd-rec-pol-nilsystems-ap}
Let $(X, T)$ be a minimal nilsystem and $k, d \in \N$.  If the system $(X,T^k)$ is minimal, then for all open $\emptyset\neq U \subseteq Z$, all polynomials $p$ on $\Z$ with $p(0) = 0$, and all $j \in \Z$, the set
\[
    R_{(0,p,2p,\ldots,dp)} (U,\ldots,U) \cap (k \Z + j)
\]
is dynamically syndetic.   
\end{lemma}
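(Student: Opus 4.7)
My plan is to reduce to \cref{lem:totall_minimal_nilsystems} by restricting the dynamics to a single connected component of $X$ along a carefully chosen arithmetic subprogression. By \cref{lem:MEF_nilsystem_not_connected}, $X$ decomposes into $k'$ connected components $X_0, \ldots, X_{k'-1}$ cyclically permuted by $T$, and the minimality of $(X, T^k)$ forces $\gcd(k,k')=1$. After possibly replacing $U$ with $U \cap X_a$ for some component $X_a$ meeting $U$, we may assume $U \subseteq X_a$. Since $T^{-ip(n)}U \subseteq X_{a - ip(n) \bmod k'}$ for each $i$, the intersection $\bigcap_{i=0}^d T^{-ip(n)}U$ can be nonempty only when $p(n) \equiv 0 \pmod{k'}$.

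Combining this necessary congruence with $n \equiv j \pmod{k}$, $\gcd(k,k')=1$, and $p(0)=0$, the Chinese Remainder Theorem provides an integer $n_0$ with $n_0 \equiv j \pmod{k}$ and $n_0 \equiv 0 \pmod{k'}$, so that any $n = n_0 + kk'm$ automatically satisfies both congruences. Expanding $p(n_0+kk'm)$ via the binomial theorem shows that $p_2(m) := p(n_0+kk'm)/k'$ lies in $\Z[m]$ and has the same degree as $p$, hence is non-constant whenever $p$ is (the constant case $p\equiv 0$ is trivial). Setting $S := T^{k'}$, one then has $T^{ip(n)}|_{X_a} = S^{ip_2(m)}|_{X_a}$ for all $n = n_0 + kk'm$.

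Since $X_a$ is connected and $(X_a, S)$ is minimal (as the first-return map of the minimal system $(X,T)$ to the component $X_a$), it is a connected minimal nilsystem, hence totally minimal (its Kronecker factor is a minimal torus rotation, and the rational independence of the rotation vector with $1$ is preserved under nonzero integer scaling). Applying \cref{lem:totall_minimal_nilsystems} to $(X_a, S)$, the non-constant polynomial $p_2$, and the open set $U$ yields a dynamically syndetic set $M \subseteq \Z$ with $\bigcap_{i=0}^d S^{-ip_2(m)}U \neq \emptyset$ for all $m \in M$. A routine suspension construction shows that dynamical syndeticity is preserved under affine transformations $m \mapsto n_0 + kk'm$, so $\{n_0 + kk'm : m \in M\}$ is dynamically syndetic and contained in $R_{(0,p,2p,\ldots,dp)}(U,\ldots,U) \cap (k\Z+j)$, giving the desired conclusion. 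The main obstacle is identifying the necessary congruence $p(n) \equiv 0 \pmod{k'}$ that confines recurrence to a single residue class modulo $kk'$, enabling the reduction to the totally minimal connected setting.
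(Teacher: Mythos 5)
Your proposal is correct and follows essentially the same route as the paper's proof: use $\gcd(k,k')=1$ (coprimality of $k$ with the number $k'$ of connected components) to pick, via CRT, a residue class $n_0 + kk'\Z$ inside $k\Z+j$ on which the polynomial rescales to an integer polynomial $p_2=p(n_0+kk'\,\cdot)/k'$ for the totally minimal connected system $(X_a,T^{k'})$, then apply \cref{lem:totall_minimal_nilsystems} and push the resulting dynamically syndetic set forward through the affine map $m\mapsto n_0+kk'm$ (the paper cites Lemmas 3.3 and 3.4 of \cite{glasscock_le_2024} for this last step, which is the ``suspension'' fact you invoke). The only differences are cosmetic: you additionally note the necessary congruence $p(n)\equiv 0\pmod{k'}$ as motivation, and you explicitly dispose of the constant case $p\equiv 0$, which the paper glosses over.
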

\begin{remark}
It is worth mentioning the similarities between \cref{lem:odd-rec-pol-nilsystems-ap} and \cref{lem:odd-rec-pol-nilsystems}. Nevertheless, none of the lemmas imply the other: \cref{lem:odd-rec-pol-nilsystems-ap} applies to an arbitrary polynomial $p(n)$ (instead of linear polynomial $p(n) = n$ as in \cref{lem:odd-rec-pol-nilsystems}) but only to rotation on $k$ points (instead of arbitrary system $(Y, S)$).
\end{remark}

\begin{proof}
    Let $\ell$ be the number of connected components of $X$.
    The assumption that $(X,T^k)$ is minimal is equivalent to the co-primality relation $(\ell,k)=1$.
    It follows that there exists $s\in\N$ such that $sk+j$ is a multiple of $\ell$.  Since $p(0) = 0$, the polynomial $q(n):=\tfrac1\ell p\big(k(\ell n+s)+j\big)$ takes values in $\Z$.

    Let $X_0\subseteq X$ be a connected component with $U_0:=X_0\cap U\neq\emptyset$.
    Then $(X_0,T^\ell)$ is a totally minimal nilsystem.
    It follows by \cref{lem:totall_minimal_nilsystems} that
    the set
    \[
        A \defeq \big\{ n \in \Z : U_0 \cap (T^\ell)^{-q(n)} U_0 \cap \cdots \cap (T^\ell)^{-dq(n)} U_0 \neq \emptyset \big\}
    \]
    is dynamically syndetic. By \cite[Lemmas 3.3 and 3.4]{glasscock_le_2024}, the set $k \big( \ell A + s \big) + j$ is also dynamically syndetic. Since
    \[
        k \big( \ell A + s \big) + j \subseteq R_{(0,p,2p,\ldots,dp)} (U,\ldots,U) \cap (k \Z + j),\]
    the conclusion of our lemma follows.
\end{proof}

We are now ready to prove \cref{cor:pol_k_p}, restated here for convenience.

\begin{named}{\cref{cor:pol_k_p}}{}
Let $(X,T)$ be a system and $k, d \in \N$. If the system $(X,T^k)$ is minimal, then for all $p \in \Z[x]$ with $p(0) = 0$, all nonempty, open $U \subseteq X$, and all $j \in \Z$, the set
\[R_{(0,p,2p,\ldots,dp)}(U,\ldots,U) \cap (k \Z + j)\]
is dynamically syndetic.
\end{named}

\begin{proof}
Assume the system $(X,T^k)$ is minimal, and let $p \in \Z[x]$ with $p(0) = 0$, $U \subseteq X$ be nonempty and open, and $j \in \Z$.  Denote by $q$ the polynomial tuple $(0, p, 2p, \ldots, dp) \in \Pol_{d+1}$.
Let $(X_\infty, T)$ be the $\infty$-step pronilfactor of $(X, T)$, and let $\pi: (X, T) \to (X_\infty, T)$ be the factor map.

By \cref{lemma_semiopen}, the map $\pi$ is semi-open, so the set $\pi U$ has nonempty interior.
As $X_\infty$ is an inverse limit of nilsystems, we can find (cf.\ \cref{rmk_open_set_in_an_inverse_limit}) a nilsystem factor $\phi : (X_\infty, T) \to (Y,T)$ and an open set $V \subseteq Y$ such that $\phi^{-1}(V) \subseteq (\pi U)^\circ$.

It follows from \cref{lem:odd-rec-pol-nilsystems-ap} that the set $R_q(V,\dots,V) \cap (k\Z + j)$ is dynamically syndetic.
The inclusion $\phi^{-1}(V) \subseteq (\pi U)^\circ$ gives $R_q(V, \ldots, V) \subseteq R_q((\pi U)^\circ, \ldots, (\pi U)^\circ)$, so
\begin{equation}
\label{eqn:something_is_syndetic}
    R_q((\pi U)^\circ, \ldots, (\pi U)^\circ) \cap (k\Z + j) \textup{ is dynamically syndetic.}
\end{equation}
Now \cref{mainthm_ps_structure_theory_pol} states that $R_q((\pi U)^\circ, \ldots, (\pi U)^\circ) \setminus R_q(U, \ldots, U)$ is not piecewise syndetic, so
\begin{equation}
\label{eqn:something_is_not_piecewise_syndetic}
R_q((\pi U)^\circ, \ldots, (\pi U)^\circ) \cap (k\Z + j)  \setminus R_q(U, \ldots, U) \cap (k\Z + j) \textup{ is not piecewise syndetic.}
\end{equation} 
Applying \cref{lemma_pwssetdifference} to \eqref{eqn:something_is_syndetic} and \eqref{eqn:something_is_not_piecewise_syndetic} concludes the proof.
\end{proof}

\subsection{Proof of \texorpdfstring{\cref{cor:totally_minimal}}{Theorem D}}
\label{sec_proof_of_thm_c}

We now prove \cref{cor:totally_minimal} which is restated here for convenience.

\begin{named}{\cref{cor:totally_minimal}}{}
    Let $(X,T)$ be a totally minimal system and $d \in \N$. For all non-constant $p \in \Z[x]$ and all nonempty, open $U\subseteq X$, the set $R_{(0,p,2p,\ldots,dp)}(U,\ldots,U)$ is dynamically syndetic.
\end{named}

\begin{proof}
Let $p \in \Z[x]$ be non-constant and $U \subseteq X$ be nonempty and open.  Let $q$ be the polynomial tuple $(0, p, 2p, \ldots, dp)$.
Let $(X_{\infty}, T)$ be the $\infty$-step pronilfactor of $(X, T)$, and let $\pi: (X, T) \to (X_{\infty}, T)$ be the factor map. 

\cref{mainthm_ps_structure_theory_pol} states that the set
\[
    R_q\big((\pi U)^\circ, \ldots, (\pi U)^\circ\big) \setminus R_q(U, \ldots, U) 
\]
is not piecewise syndetic. 
In view of \cref{lemma_pwssetdifference}, to show that $R_q(U, \ldots, U)$ is dynamically syndetic, it suffices to show that $R_q((\pi U)^\circ, \ldots, (\pi U)^\circ)$ is dynamically syndetic.

By \cref{lemma_semiopen}, the map $\pi$ is semi-open, so the set $\pi U$ has nonempty interior.
As $X_\infty$ is an inverse limit of nilsystems, we can find (cf.\ \cref{rmk_open_set_in_an_inverse_limit}) a nilsystem factor $\phi : (X_\infty, T) \to (Y,T)$ and an open set $V \subseteq Y$ such that $\phi^{-1}(V) \subseteq (\pi U)^\circ$.
As a result,
\[
    R_q(V, \ldots, V) \subseteq R_q\big((\pi U)^\circ, \ldots, (\pi U)^\circ\big).
\]
The system $(Y,T)$ is a totally minimal nilsystem, so \cref{lem:totall_minimal_nilsystems} gives that the set $R_q(V, \ldots, V)$ is dynamically syndetic, concluding the proof.
\end{proof}

\subsection{Relationship between conjectures in \texorpdfstring{\cite{glasner_huang_shao_weiss_ye_2020}}{[5]} and \texorpdfstring{\cite{Leibman05b}}{[11]}}

In this section we prove that Conjectures \ref{conj:polynomial-odd-recurrence} and \ref{conj:leibmanish} are equivalent. We do so by relating both to new conjectures on topological multiple recurrence and nilsystems.
Throughout the section, $(X,T)$ denotes a fixed minimal nilsystem.

\subsubsection{Technical results}

To prove the equivalence, we need some technical results about nilrotations from \cite{Leibman07}. 
Let $(X = G/\Gamma, T)$ be a nilsystem. A \emph{subnilmanifold} $Y$ of $X$ is a closed subset of $X$ for the form $H x$ where $H$ is a closed subgroup of $G$ and $x \in X$.

If $V$ is a connected subnilmanifold of $X$ and $g$ is a polynomial sequence in $G$, it is shown in \cite[Theorem 2.3]{Leibman07} that there exists a subset $Y_{V, g}$ of $X$ such that for almost every $x \in V$ (with respect to the Haar measure on $V$), the orbit closure $\orb_g(x) = \overline{\{g(n) x: n \in \Z\}}$ is a translate of $Y_{V, g}$, i.e. $\orb_g(x) = a Y_{V, g}$ for some $a \in G$.
A point $x$ with this property is called a \emph{generic point} for $g$ on $V$ and the set $Y_{V, g}$ is called the \emph{generic orbit} of $g$ on $V$.

\begin{lemma}[{\cite[Theorem 5.9]{Leibman07}}]
\label{lem:leibman_connected_components_of_translate_subnilmanifold}
Suppose $X = G/\Gamma$ is a nilmanifold and $g$ is a polynomial sequence in $G$. Let $V$ be a connected subnilmanifold of $X$ and let $Y_{V, g}$ be the generic orbit of $g$ on $V$. Let $Y$ be a connected component of $Y_{V, g}$ and suppose $Y = Hx,$ where $H$ is a closed subgroup of $G$ and $x \in X$. 
Then every connected component of the orbit closure $\orb_g(V) = \overline{\{g(n) V: n \in \Z\}}$ is a translate of $HV = \bigcup_{h \in H} hV$.
\end{lemma}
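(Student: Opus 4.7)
My plan is to prove the lemma by first identifying the connected component of $\orb_g(V)$ that contains $V$, showing that this component equals $HV$, and then using the action of $g$ to deduce that every other component is a translate of $HV$. Throughout, I will replace $H$ by its identity component if necessary, which leaves the set $Y = Hx$ unchanged as a connected subset and ensures that $HV$, as a continuous image of $H \times V$, is connected.

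For the central step, I would start from the decomposition $\orb_g(V) = \overline{\bigcup_{v \in V} \orb_g(v)}$ together with the defining property of $Y_{V,g}$: for almost every $v \in V$, the closure $\orb_g(v)$ is a translate of $Y_{V,g}$, whose connected components are translates of $Y = Hx$. To show $HV$ is contained in the connected component of $V$, I would fix $h \in H$ and $v \in V$, pick a generic $v_0 \in V$ near $v$, and use that $hv_0$ lies in the same connected component of $\orb_g(v_0)$ as $v_0$ to find a sequence $n_k$ with $g(n_k) v_0 \to h v_0$. The technical heart of the argument is to upgrade this pointwise convergence to Hausdorff convergence $g(n_k) V \to h V$. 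For this, I would use the nilpotent group structure to factor $g(n_k) = h_k \gamma_k$ with $h_k \to h$ in $G$ and $\gamma_k$ lying in a controlled neighborhood of the identity modulo the stabilizer of $v_0$, which forces $g(n_k) v \to h v$ for every $v \in V$. For the reverse inclusion, I would invoke Leibman's global description of $\orb_g(V)$ as a finite disjoint union of translates of a single subnilmanifold, so that the component of $V$ must equal one of these translates and hence coincide with $HV$.

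For the final step, I would use that each map $x \mapsto g(n) x$ is a homeomorphism preserving the closed set $\orb_g(V)$ and therefore permutes its connected components. Since $g(n) HV$ is connected and contains $g(n) V \subseteq \orb_g(V)$, it lies inside a single component, which by the previous step equals $g(n) HV$. Letting $n$ range over $\Z$ then covers every connected component of $\orb_g(V)$ by translates of $HV$, completing the proof.

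The hardest part of the plan is the upgrade from the pointwise convergence $g(n_k) v_0 \to h v_0$ in $X$ to the Hausdorff convergence $g(n_k) V \to h V$. Convergence in $G/\Gamma$ does not immediately imply convergence in $G$, even modulo a convenient closed subgroup, so one genuinely needs the nilpotent structure of $G$, the polynomial nature of $g$, and the fact that $Y = Hx$ arises as a connected component of a polynomial orbit closure to control the ``extra'' direction along which $g(n_k)$ differs from $h$. Once this structural input is in place, the remaining topological pieces of the argument are comparatively routine.
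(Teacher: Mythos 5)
First, note that the paper does not prove this lemma at all: it is imported verbatim as \cite[Theorem 5.9]{Leibman07}, so there is no internal proof to compare against, and you are in effect attempting to reprove a nontrivial external result of Leibman from scratch. Judged on its own terms, your sketch has genuine gaps at exactly the places where the real content lies.

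The first gap is the claim that for a generic $v_0 \in V$ the point $h v_0$ lies in the same connected component of $\orb_g(v_0)$ as $v_0$. What the definition of $Y_{V,g}$ gives you is that $\orb_g(v_0) = a\,Y_{V,g}$ for some $a \in G$, so the component of $\orb_g(v_0)$ containing $v_0$ is a set of the form $b H x$ with $b h_0 x = v_0$; this is \emph{not} the same as $H v_0$ unless $b$ normalizes $H$ (and one must also address that $v_0 \in \orb_g(v_0)$ and $V \subseteq \orb_g(V)$ require $g(0)$ to be the identity, which a general polynomial sequence does not satisfy). Without pinning down how $H$ sits relative to the individual orbit closures $\orb_g(v_0)$ — which is precisely the structural content of Leibman's theorem — the starting point of your argument is not available. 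The second gap is the step you yourself flag as the technical heart: from $g(n_k) v_0 \to h v_0$ in $G/\Gamma$ you can only factor $g(n_k) = h_k \gamma_k$ with $h_k \to h$ and $\gamma_k$ stabilizing (a neighborhood of) the single point $v_0$; such $\gamma_k$ need not act trivially, or even convergently, on the other points of $V$, so $g(n_k) v \to h v$ for all $v \in V$ does not follow. Making this work requires the quantitative equidistribution/structure theory for polynomial orbits that Leibman develops, not just "the nilpotent group structure." Finally, your reverse inclusion invokes "Leibman's global description of $\orb_g(V)$ as a finite disjoint union of translates of a single subnilmanifold," which is essentially the statement being proved, so the argument as written is circular at that point. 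The last paragraph (that the maps $y \mapsto g(n)y$ permute components and hence every component is a translate of the one containing $g(0)V$) is fine once the central step is established, but as it stands the proposal does not constitute a proof.
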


\begin{proposition}\label{prop:orbit_closure_subnil_arith_same}
Let $X = G/\Gamma$ be a nilmanifold and $g$ be a polynomial sequence in $G$. Let $V$ be a connected subnilmanifold of $X$. If $\orb_g(V)$ is connected, then
\[
    \orb_g(V) = \overline{\{g(p(n)) V: \ n \in \Z\}}
\]
for every non-constant polynomial $p$ on $\Z$.
\end{proposition}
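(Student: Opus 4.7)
The plan is to apply \cref{lem:leibman_connected_components_of_translate_subnilmanifold} to both polynomial sequences $g$ and $g \circ p$, and to conclude by showing that the connected subgroup the lemma produces is invariant under polynomial reparametrization by non-constant polynomials.

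First, I would apply \cref{lem:leibman_connected_components_of_translate_subnilmanifold} to $g$: every connected component of $\orb_g(V)$ is a translate of $HV$, where $H$ is a closed connected subgroup of $G$ such that each connected component of the generic orbit $Y_{V,g}$ has the form $Hx$. Since $\orb_g(V)$ is connected by hypothesis, it is a single such translate $bHV$; noting that $V \subseteq \orb_g(V)$, I may absorb the translation and assume $\orb_g(V) = HV$, a connected subnilmanifold of $X$. Applying the same lemma to the reparametrized sequence $g \circ p$ (still a polynomial sequence in $G$), each connected component of $\overline{\{g(p(n))V : n \in \Z\}}$ is a translate of $H_p V$, for some closed connected subgroup $H_p \leq G$ associated to the generic orbit $Y_{V, g \circ p}$. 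Trivially $\overline{\{g(p(n))V\}} \subseteq \orb_g(V) = HV$, since $\{p(n):n\in\Z\} \subseteq \Z$.

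The heart of the proof will be to show $H_p V = HV$. The subgroup $H$ extracted by \cref{lem:leibman_connected_components_of_translate_subnilmanifold} is, in effect, the connected component of the Leibman closure of $g$ relative to the stabilizer of $V$; equivalently, it is determined by the leading coefficients of $g$ in a Mal'cev basis of $G$. A core feature of Leibman's theory of polynomial orbits on nilmanifolds, and the same one that underlies \cref{lem:frantzikinakis_connected}, is that this tangent subgroup is unchanged when the polynomial sequence is reparametrized by a non-constant polynomial: the polynomial families $\{n^j\}_{j \geq 0}$ and $\{p(n)^j\}_{j \geq 0}$ populate the same filtered layers of $\Z[x]$, so $g$ and $g \circ p$ feed Leibman's orbit-closure machinery through the same subspace of the Lie algebra of $G$. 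I expect this invariance to be the main obstacle: making it rigorous will require either a direct Mal'cev-coordinates computation or a careful appeal to Leibman's structure theorems, and I will need to track constant-term contributions at $n=0$ (which only introduce a global translation, already absorbed above).

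Given $H_p V = HV$, the conclusion is immediate. Each connected component of $\overline{\{g(p(n))V\}}$ is a translate of $HV$ lying inside $HV$; such a translate is a connected subnilmanifold of $HV$ of the same dimension as $HV$, and so must coincide with $HV$ by connectedness of $HV$. Therefore $\overline{\{g(p(n))V\}}$ has a single connected component, namely $HV$, and the desired equality $\overline{\{g(p(n))V\}} = HV = \orb_g(V)$ follows.
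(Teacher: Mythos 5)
Your overall architecture matches the paper's: apply \cref{lem:leibman_connected_components_of_translate_subnilmanifold} to both $g$ and $h = g\circ p$, argue that both produce (translates of) the same set $HV$, and finish with the dimension-plus-connectedness argument. That last step is fine. But the step you yourself flag as ``the heart'' --- showing $H_pV = HV$ --- is exactly the content of the proposition, and the justification you offer for it (the connected subgroup is ``determined by the leading coefficients of $g$ in a Mal'cev basis'' and is ``unchanged under reparametrization'') is a heuristic, not an argument, and it obscures the actual difficulty. The difficulty is this: even though $\orb_g(V)$ is connected, the orbit closure $\orb_g(x)$ of a generic point $x\in V$ need not be; by Leibman's Theorem B it is a cyclically visited union $Y_0\cup\cdots\cup Y_{\ell-1}$ of translates $Y_i = Hx_i$ of a connected subgroup $H$, and the real question is which of these components the reparametrized sequence $g(p(n))x$ visits and whether it is dense in each one it visits. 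A ``leading coefficients'' picture does not see this, and it also does not make visible where non-constancy of $p$ is used --- which it must be, since the statement is false for constant $p$.

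The paper closes this gap as follows. Pick $x\in V$ generic for both $g$ and $h$. Decompose $\Z$ into progressions $\ell n + i$; on each, $g(\ell n + p(i))x$ is well-distributed on the \emph{connected} component $Y_{p(i)\bmod\ell}$. The polynomial $n\mapsto \bigl(p(\ell n+i)-p(i)\bigr)/\ell$ has integer coefficients and is non-constant, so \cref{lem:frantzikinakis_connected} (applied to the connected orbit closure $Y_{p(i)\bmod\ell}$ of the sequence $f_i(n)=g(\ell n+p(i))$) gives $\overline{\{g(p(\ell n+i))x\}} = Y_{p(i)\bmod\ell}$. Hence $\orb_h(x)$ is a union of some of the components $Y_j$ of $\orb_g(x)$, each a translate of the \emph{same} $H$; \cref{lem:leibman_connected_components_of_translate_subnilmanifold} then shows the components of $\orb_h(V)$ and of $\orb_g(V)$ are all translates of $HV$, and the dimension argument you describe finishes the proof. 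Note that $\orb_h(x)$ may well be a proper subset of $\orb_g(x)$ (if $p$ misses some residues mod $\ell$), so one genuinely cannot argue at the level of a single generic point that the two orbit closures agree; it is only after passing back to $\orb_g(V)$, which is connected by hypothesis, that equality is recovered. Without some version of this component-by-component analysis, your proposal does not constitute a proof.
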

\begin{proof}
Suppose $\orb_g(V)$ is connected and let $p$ be a non-constant polynomial over $\Z$.
Let $x \in V$ be generic for both polynomial sequences $g(n)$ and $h(n) = g(p(n))$. (Note that such a point $x$ exists since the set of generic points on $V$ for each polynomial sequence has full Haar measure.)

By \cite[Theorem B]{Leibman05a}, there exists a connected closed subgroup $H$ of $G$ and points $x_0, \ldots, x_{\ell-1} \in X$, not necessarily distinct, such that the sets $Y_i = H x_i$ are closed subnilmanifolds of $X$, and
\[
    \orb_g(x) = \bigcup_{i=0}^{\ell-1} Y_i.
\]
Moreover, the sequence $n \mapsto g(n) x$ cyclically visits $Y_0, \ldots, Y_{\ell-1}$ and the sequence $n \mapsto g(\ell n + i)x$ is well-distributed on $Y_i$. 

For $i \in \{0, \ldots, \ell - 1\}$, define a new polynomial sequence $f_i(n) = g(\ell n + p(i))$. Then by \cite[Theorem~B]{Leibman05a} again, 
\[
    \overline{\{f_i(n) x: \ n \in \Z\}} = \overline{\{g(\ell n + p(i)) x: \ n \in \Z\}} = Y_{p(i) \bmod \ell}. 
\]
The coefficients of the polynomial $n \mapsto p(\ell n + i) - p(i)$ are divisible by $\ell$ and so the function $n \mapsto \frac{p(\ell n + i) - p(i)}{\ell}$ is a polynomial on $\Z$.
Since $Y_{p(i) \bmod \ell}$ is connected, by \cref{lem:frantzikinakis_connected}, 
\[
    Y_{p(i) \bmod \ell} = \overline{
    \left\{f_i\left(\frac{p(\ell n + i) - p(i)}{\ell} \right)x : \ n \in \Z\right\}} = \overline{\{g(p(\ell n + i))x: \ n \in \Z\}}.
\]
It follows that 
\[
    \overline{\{h(n) x: \ n \in \Z\}} = \bigcup_{i=0}^{\ell - 1} \overline{\{g(p(\ell n + i))x: \ n \in \Z\}} = \bigcup_{i=0}^{\ell-1} Y_{p(i) \bmod \ell}.
\]
In other words, $\orb_h(x)$ consists of some connected components of $\orb_g(x)$.
By \cref{lem:leibman_connected_components_of_translate_subnilmanifold}, the connected components of $\orb_g(V)$ and $\orb_h(V)$ are  translates of $HV$; in particular, they have the same dimension. Since $\orb_g(V)$ is already connected, we deduce that $\orb_g(V)$ itself is a translate of $HV$.  
On the other hand, $\orb_h(V) \subseteq \orb_g(V)$. Therefore, any connected component of $\orb_h(V)$ is a subnilmanifold of the connected nilmanifold $\orb_g(V)$ of the same dimension and so it must be that $\orb_h(V) = \orb_g(V)$. 
\end{proof}

\begin{lemma}
\label{lem:dense_st_same_orbit_closure}
Let $X = G/\Gamma$ be a nilmanifold and $g(n)$ be a polynomial sequence on $G$. Let $Y \subseteq V \subseteq X$ be subsets such that $Y$ is dense in $V$. Then
\[
    \orb_g(Y) = \orb_g(V).
\]
\end{lemma}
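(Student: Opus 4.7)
The plan is to prove the two inclusions separately, with the nontrivial one reducing to the elementary observation that each $g(n)$ acts as a homeomorphism on $X$.

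First, the inclusion $\orb_g(Y) \subseteq \orb_g(V)$ is immediate from $Y \subseteq V$, since $g(n)Y \subseteq g(n)V$ for every $n \in \Z$, and taking the union over $n$ and then the closure preserves containment.

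For the reverse inclusion $\orb_g(V) \subseteq \orb_g(Y)$, I would fix $n \in \Z$ and argue that $g(n)V \subseteq \orb_g(Y)$. Since left-translation by the fixed group element $g(n) \in G$ is a homeomorphism of $X = G/\Gamma$, it commutes with closures, so
\[
g(n) V \subseteq g(n) \overline{Y} = \overline{g(n) Y} \subseteq \overline{\bigcup_{m \in \Z} g(m) Y} = \orb_g(Y),
\]
where the first inclusion uses that $Y$ is dense in $V$. Taking the union over $n \in \Z$ yields $\bigcup_{n} g(n) V \subseteq \orb_g(Y)$, and since the right-hand side is closed, passing to the closure gives $\orb_g(V) \subseteq \orb_g(Y)$.

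There is no real obstacle here; the lemma is a purely topological statement about continuous group actions, with no use of the polynomial structure of $g$ beyond the fact that each $g(n)$ is an element of $G$ acting by a homeomorphism on $X$. The lemma is presumably included as a convenient bookkeeping tool to transfer orbit-closure computations from a dense subset (e.g., a set of generic points) to an entire subnilmanifold.
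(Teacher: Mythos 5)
Your proof is correct and follows essentially the same route as the paper: the paper also establishes only the nontrivial inclusion $\orb_g(V) \subseteq \orb_g(Y)$, by taking $x \in V$, a sequence $x_i \in Y$ with $x_i \to x$, and using continuity of translation by $g(n)$ to get $g(n)x \in \orb_g(Y)$ — which is just the sequential version of your observation that $g(n)$ is a homeomorphism commuting with closures. No substantive difference.
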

\begin{proof}
Let $n \in \Z$ and $x \in V$ be arbitrary. Since $Y$ is dense in $V$, there exists a sequence $(x_i) \subseteq Y$ such that $x_i \to x$. It follows that $g(n) x_i \to g(n) x$ as $i \to \infty$. Thus,
\[
    g(n) x \in \orb_g(Y).
\]
Since $n, x$ are arbitrary, we get $\orb_g(V) \subseteq \orb_g(Y)$.
\end{proof}

\begin{lemma}\label{lem:disconnected_cannot}
Let $X = G/\Gamma$ be a nilmanifold and $V$ be a connected subnilmanifold of $X$. Let $g(n)$ be a polynomial sequence on $G$. If $\orb_{g}(V)$ is disconnected, then there exist $k, j_1, j_2 \in \N$ such that 
\[
    \overline{\{g(kn + j_1) V: \ n \in \Z\}} \cap \overline{\{g(kn + j_2) V: \ n \in \Z\}} = \emptyset.
\]
\end{lemma}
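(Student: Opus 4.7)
The plan is to use Leibman's cyclic decomposition for the orbit of a generic point $x \in V$ together with \cref{lem:leibman_connected_components_of_translate_subnilmanifold} to show that the connected component of $\orb_g(V)$ in which $g(n)V$ lies depends only on $n$ modulo some period $\ell$. Once this is established, disconnectedness of $\orb_g(V)$ forces the existence of two residues landing in distinct components, giving the desired disjointness.

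Concretely, I would first invoke Leibman's structure theorem (the same version used in the proof of \cref{prop:orbit_closure_subnil_arith_same}, cf.\ \cite[Theorem B]{Leibman05a}) at a generic $x \in V$: there exist a connected closed subgroup $H \leq G$, an integer $\ell \in \N$, and points $x_0, \ldots, x_{\ell - 1} \in X$ such that
\[
\orb_g(x) = \bigsqcup_{i=0}^{\ell - 1} H x_i,
\]
with $g(\ell n + i)x$ well-distributed on $H x_i$; in particular $g(n) x \in H x_{n \bmod \ell}$ for every $n \in \Z$. Next, applying \cref{lem:leibman_connected_components_of_translate_subnilmanifold} to $V$ (with $Y = H x_i$) guarantees that every connected component of $\orb_g(V)$ is a translate of $HV$.

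For each $i \in \{0, \ldots, \ell - 1\}$, let $C(i)$ denote the connected component of $\orb_g(V)$ containing $H x_i$. Since $g(n) V$ is the continuous image of the connected set $V$ under the homeomorphism $y \mapsto g(n) y$, it is connected and thus contained in a single component of $\orb_g(V)$. Because $g(n) x \in H x_{n \bmod \ell} \cap g(n) V$ and $H x_{n \bmod \ell}$ is itself connected, this component must be $C(n \bmod \ell)$; that is,
\[
g(n) V \subseteq C(n \bmod \ell) \quad \text{for every } n \in \Z.
\]
Taking $k = \ell$, this yields $\overline{\{g(kn + j) V : n \in \Z\}} \subseteq C(j \bmod k)$ for every $j \in \Z$. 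Since $\bigcup_n g(n) V$ is dense in $\orb_g(V)$, the finitely many closed sets $C(0), \ldots, C(\ell - 1)$ exhaust the components of $\orb_g(V)$; disconnectedness therefore forces the existence of $j_1, j_2 \in \{0, \ldots, \ell - 1\}$ with $C(j_1) \neq C(j_2)$. Distinct connected components being disjoint, and shifting $j_1, j_2$ by $k$ if necessary to land in $\N$, completes the proof.

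The main obstacle is transferring the cyclic structure from the orbit of a single generic point $x$ to the orbit of the entire subnilmanifold $V$. This is exactly where the combination of connectedness of $V$ with \cref{lem:leibman_connected_components_of_translate_subnilmanifold} becomes crucial: without knowing that the components of $\orb_g(V)$ are precisely translates of $HV$, one could a priori worry that orbits of different $g(n) V$ merge into fewer components than suggested by the cyclic decomposition of $\orb_g(x)$, obstructing the modular-residue argument.
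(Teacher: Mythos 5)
Your proof is correct, but it takes a genuinely different route from the paper's. The paper fixes a point $x_0\in V$ that is simultaneously generic for all the sub-progression sequences $(g(kn+j))_n$ (via \cite[Theorem 1.13]{Leibman07}), uses \cite[Theorem B]{Leibman05a} to choose $k$ so that each $\overline{\{g(kn+j)x_0\}}$ is connected, upgrades this to \emph{every} $\orb_{g(kn+j)}(V)$ being connected through a density-of-generic-points argument (this is where \cref{lem:dense_st_same_orbit_closure} enters), and finally observes that pairwise-intersecting connected sets have a connected union. You instead apply Leibman's cyclic decomposition at a single point $x\in V$ and then argue purely with the topology of connected components: $g(n)V$ is connected and meets $Hx_{n\bmod\ell}$, so it sits in the component $C(n\bmod\ell)$, and disconnectedness of $\orb_g(V)=\bigcup_j\overline{\{g(\ell n+j)V\}}$ forces two distinct components. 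This is cleaner in that it bypasses all the genericity and equidistribution machinery; note also that you never prove (nor need) that each $\overline{\{g(kn+j)V\}}$ is connected, only that it lies inside one component. One small remark: your invocation of \cref{lem:leibman_connected_components_of_translate_subnilmanifold} is actually superfluous. The ``merging'' worry you raise in the last paragraph is already handled by your own argument: if components merge, you simply get fewer distinct $C(j)$'s, and the hypothesis that $\orb_g(V)$ is disconnected is exactly what rules out all of them coinciding. So the translate structure of the components plays no role, and genericity of $x$ is not needed either --- any $x\in V$ works.
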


\begin{remark}
If $V$ is a single point, then the lemma follows from \cite[Theorem B]{Leibman05a}. 
\end{remark}

\begin{proof}
By \cite[Theorem 1.13]{Leibman07}, for each $k \in \N$ and $j \in \{0, \ldots, k - 1\}$, the set of points in $V$ which are non-generic for the polynomial sequence $(g(kn + j))_{n \in \Z}$ has zero measure (with respect to the Haar measure on $V$). 
Therefore, there exists $x_0 \in V$ which is generic for $(g(kn + j))_{n \in \Z}$ for all $k \in \N, j \in \{0, \ldots, k - 1\}$. 
By \cite[Theorem B]{Leibman05a}  there exists $k$ such that $\overline{\{g(kn + j) x_0: \ n \in \Z\}}$ is connected for any $j \in \{0, \ldots, k - 1\}$. Since $x_0$ is generic for $(g(k n + j))_{n \in \Z}$ for $j \in \{0, \ldots, k - 1\}$, by \cite[Theorem 1.13]{Leibman07} again, for generic $x \in V$ and $j \in \{0, \ldots, k - 1\}$, $\{g(k n + j) x: \ n \in \Z\}$ is connected.

We claim that for any $j \in \Z$, $\orb_{g(kn + j)}(V) = \overline{\{g(kn + j) V: \ n \in \Z\}}$ is connected. 
For contradiction, assume that there exist disjoint open sets $U_1, U_2 \subseteq X$ both of which intersect $\orb_{g(kn + j)} (V)$ and
\[
    \orb_{g(kn + j)}(V)  \subseteq U_1 \cup U_2.
\]
Since $V$ is connected, $g(k + j) V$  is connected and so it belongs to either $U_1$ or $U_2$. 
Without loss of generality, assume $g(k+j) V \subseteq U_1$. For generic $x \in V$, $g(k+j)(x) \in U_1$ and $\orb_{g(kn + j)} (x)$ is connected. 
As a result, 
\[
    \orb_{g(kn + j)}(x) \subseteq U_1.
\]
Now by \cref{lem:dense_st_same_orbit_closure},
\[
    \overline{\{g(kn + j) V: \ n \in \Z\}} = \bigcup_{x \in V \text{generic}} \overline{\{g(kn + j) x: \ n \in \Z\}}.
\]
Therefore, 
\[
    \orb_{g(kn + j)}(V)  \subseteq U_1  
\]
contradicting the assumption that $ \orb_{g(kn + j)}(V) \cap U_2 \neq \emptyset$. Thus $\orb_{g(kn + j)}(V)$ is connected for all $j \in \Z$.

If 
\[
    \orb_{g(kn + j_1)} (V) \cap \orb_{g(kn + j_2)} (V) \neq \emptyset
\]
for all $j_1, j_2$, then all $\orb_{g(kn + j)}(V)$ ($j \in \Z$) belong to a single connected component of $\orb_g(V)$. But this is impossible since $\orb_g(V)$ is disconnected and
\[
    \orb_g(V) = \bigcup_{j=0}^{k-1} \orb_{g(kn + j)}(V).
\]
The proof is now complete.
\end{proof}

\subsubsection{\texorpdfstring{\cref{conj:leibmanish}}{Conjecture 1.4} along progressions}

We begin by proving that \cref{conj:leibmanish} is equivalent to the following version along arithmetic progressions.

\begin{Conjecture}
\label{lem:connected_pol_first}
Let $p_1, \ldots, p_d \in \Z[x]$ with $p_i(0) = 0$ for all $1 \le i \le d$, and let $(X,T)$ be a totally minimal nilsystem with $X = G/\Gamma$ and $Tx = ax$. If $g(n) = (a^{p_1(n)},\dots,a^{p_d(n)}),$ then for all $k \in \N$ and all $j \in \Z$,
\[
\Delta_{X^d} \subseteq \overline{\{g(kn+j) \Delta_{X^{d}}: \ n \in \Z\}}.
\]
\end{Conjecture}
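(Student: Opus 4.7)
The plan is to reduce the claim to the connectedness of the full orbit closure $Z := \overline{\{g(n)\Delta_{X^d}: n \in \Z\}}$, and then to establish that connectedness using total minimality of $(X,T)$. For the reduction: since each $p_i(0) = 0$, the element $g(0) \in G^d$ is the identity, so $g(0)\Delta_{X^d} = \Delta_{X^d}$, giving $\Delta_{X^d} \subseteq Z$. Total minimality of $(X,T)$ also forces $X$ to be connected, since otherwise $T$ would cyclically permute the $\ell \geq 2$ components of $X$ and $(X, T^\ell)$ would fail to be minimal. Consequently $\Delta_{X^d}$ is a connected subnilmanifold of $X^d = G^d/\Gamma^d$.

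Assuming $Z$ is connected, I would invoke \cref{prop:orbit_closure_subnil_arith_same} applied to the polynomial sequence $g$ in $G^d$, the connected subnilmanifold $V = \Delta_{X^d}$, and the non-constant polynomial $p(n) = kn+j$ (non-constant because $k \in \N$). The proposition yields
\[
Z \;=\; \overline{\{g(kn+j)\Delta_{X^d}: n \in \Z\}},
\]
which, combined with $\Delta_{X^d} \subseteq Z$, delivers the desired inclusion.

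The main obstacle is thus establishing connectedness of $Z$. My approach is by contradiction via \cref{lem:disconnected_cannot}: if $Z$ were disconnected, it would produce $k \in \N$ and distinct $j_1, j_2 \in \{0, \ldots, k-1\}$ with $\overline{\{g(kn+j_1)\Delta_{X^d}: n \in \Z\}} \cap \overline{\{g(kn+j_2)\Delta_{X^d}: n \in \Z\}} = \emptyset$. To refute this, I would select a diagonal point $(x, \ldots, x) \in \Delta_{X^d}$ which is generic for both subsequences (non-generic points form a null set, in the sense used in \cref{lem:dense_st_same_orbit_closure}), translate the polynomials $n \mapsto p_i(kn+j_\ell)$ via the shift identity of \cref{lem_translated_poly_returns} to restore zero constant terms, and then invoke the IP polynomial Szemer\'edi theorem (\cref{lem_nonempty_implies_syndetic}) to produce a common limit point of the two sub-orbits near $\Delta_{X^d}$—contradicting disjointness.

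The delicate step is verifying the essential-distinctness hypothesis required by \cref{lem_nonempty_implies_syndetic} for the resulting polynomial tuple: if any $p_i$ has degree one, the difference $p_i(kn+j_1) - p_i(kn+j_2)$ is constant, so the translated tuple has repeats and Bergelson--McCutcheon does not apply directly. Overcoming this would require separating the linear and higher-degree parts of $g$, handling the linear part by direct minimality of $(X, T^k)$, and applying the polynomial input only to the higher-degree components; \cref{lem:leibman_connected_components_of_translate_subnilmanifold} should then control how shifts by $j_1, j_2$ distribute across the connected components of the generic orbit, enabling the coincidence to be transferred back to $\Delta_{X^d}$ and thereby completing the contradiction.
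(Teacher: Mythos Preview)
The statement you are attempting to prove is \cref{lem:connected_pol_first}, which is stated in the paper as a \emph{conjecture}, not a theorem. The paper does not prove it; rather, \cref{thm:leibman_nil_equiv} and \cref{thm:all_conjectures_equivalent} establish that it is equivalent to several other open conjectures, including \cref{conj:leibmanish} (connectedness of $Z$) and \cref{conj:polynomial-odd-recurrence}. Your reduction step---showing that connectedness of $Z = \overline{\{g(n)\Delta_{X^d}: n \in \Z\}}$ implies the stated inclusion via \cref{prop:orbit_closure_subnil_arith_same}---is correct and is precisely the ``\cref{conj:leibmanish} $\Rightarrow$ \cref{lem:connected_pol_first}'' direction of the paper's \cref{thm:leibman_nil_equiv}.

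The genuine gap lies in your attempt to then \emph{prove} connectedness of $Z$, which is itself the open \cref{conj:leibmanish}. The obstruction is more fundamental than the essential-distinctness issue you flag. To show that $\Delta_{X^d}$ lies in $\overline{\{g(kn+j_\ell)\Delta_{X^d}: n \in \Z\}}$ for a fixed $j_\ell \neq 0$, you need, for every open $U \subseteq X$, some $n$ with $\bigcap_i T^{-p_i(kn+j_\ell)}U \neq \emptyset$. After translating to zero constant term, this becomes $\bigcap_i T^{-\tilde q_i(n)}\big(T^{-p_i(j_\ell)}U\big) \neq \emptyset$, where the target sets $T^{-p_i(j_\ell)}U$ are \emph{different} for different $i$. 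The IP polynomial \Szemeredi{} theorem and \cref{lem_nonempty_implies_syndetic} only guarantee nonemptiness of $R_{\tilde q}(V,\ldots,V)$ for a \emph{single} set $V$; they say nothing when the sets differ and need not intersect. This is exactly the difficulty that keeps \cref{conj:polynomial-odd-recurrence} open, and it arises already for purely higher-degree tuples such as $p_1(n)=n^2$, $p_2(n)=n^3$, so separating off the linear part does not help. In short, your argument for connectedness is not a gap to be patched but a restatement of the open problem.
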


\begin{theorem}
\label{thm:leibman_nil_equiv}
\cref{conj:leibmanish} and \cref{lem:connected_pol_first} are equivalent.
\end{theorem}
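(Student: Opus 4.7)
The plan is to prove both directions of the equivalence via short direct arguments that apply the two technical results developed earlier, namely Proposition \ref{prop:orbit_closure_subnil_arith_same} and Lemma \ref{lem:disconnected_cannot}, to the diagonal subnilmanifold $V := \Delta_{X^d}$ of $X^d = G^d/\Gamma^d$. The key preliminary observation is that because $(X,a)$ is a totally minimal nilsystem, $X = G/\Gamma$ is connected (otherwise, writing $X$ as a disjoint union of $\ell > 1$ components, $(X, a^\ell)$ would fail to be minimal), and hence $V$ is a connected subnilmanifold of $X^d$, identifiable with $\Delta_G \cdot (\Gamma,\dots,\Gamma)$ for the diagonal subgroup $\Delta_G \leq G^d$. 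A second routine observation is that since each $p_i$ satisfies $p_i(0) = 0$, the polynomial sequence $g$ satisfies $g(0) = 1_{G^d}$, so $g(0) V = V$ and in particular $V \subseteq \orb_g(V)$.

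For the implication \ref{conj:leibmanish} $\Rightarrow$ \ref{lem:connected_pol_first}, I would fix $k \in \N$ and $j \in \Z$, note that $p(n) = kn + j$ is non-constant, and invoke Proposition \ref{prop:orbit_closure_subnil_arith_same}, whose hypotheses are met because $\orb_g(V)$ is connected by Conjecture \ref{conj:leibmanish}. This yields
\[
    \orb_g(V) = \overline{\{g(kn+j) V : n \in \Z\}}.
\]
Combined with the inclusion $V \subseteq \orb_g(V)$ noted above, this gives the required containment $\Delta_{X^d} \subseteq \overline{\{g(kn+j)\Delta_{X^d}: n \in \Z\}}$.

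For the reverse implication \ref{lem:connected_pol_first} $\Rightarrow$ \ref{conj:leibmanish}, I would argue by contradiction: if $\orb_g(V)$ were disconnected, then Lemma \ref{lem:disconnected_cannot} (applied to the connected subnilmanifold $V$) would produce $k, j_1, j_2 \in \N$ with
\[
    \overline{\{g(kn+j_1) V : n \in \Z\}} \cap \overline{\{g(kn+j_2) V : n \in \Z\}} = \emptyset,
\]
whereas Conjecture \ref{lem:connected_pol_first} forces each of these two closures to contain $V$ and in particular their intersection to contain $V$, a contradiction.

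I do not anticipate any serious obstacles; the substantive content is already encapsulated in Proposition \ref{prop:orbit_closure_subnil_arith_same} and Lemma \ref{lem:disconnected_cannot}, and the equivalence follows almost formally once one recognizes that the vanishing condition $p_i(0)=0$ is precisely what makes $V$ invariant under $g(0)$. The only routine points to double-check are the connectedness of $X$ under total minimality and the applicability of the two technical lemmas to the subnilmanifold $V = \Delta_{X^d}$ in $X^d$, both of which are straightforward.
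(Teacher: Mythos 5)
Your proof is correct and follows essentially the same route as the paper's: Proposition \ref{prop:orbit_closure_subnil_arith_same} applied with $p(n)=kn+j$ gives the forward implication, and Lemma \ref{lem:disconnected_cannot} gives the converse by contradiction. The only (harmless) difference is that where the paper invokes a result of Bergelson--Leibman--Lesigne to obtain $\Delta_{X^d}\subseteq\orb_g(\Delta_{X^d})$, you observe more simply that $g(0)=1_{G^d}$ because $p_i(0)=0$, which already places $\Delta_{X^d}$ inside the orbit closure; your explicit verification that total minimality forces $X$, and hence $\Delta_{X^d}$, to be connected is a point the paper leaves implicit.
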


\begin{proof}
Fix polynomials $p_1,\dots,p_d \in \Z[x]$ with $p_i(0) = 0$ for all $1 \le i \le d$ and a totally minimal nilsystem $(X,T)$ with $X = G/\Gamma$ and $Tx = ax$. Write $g(n) = (a^{p_1(n)},\dots,a^{p_d(n)})$.

(\cref{lem:connected_pol_first} implies \cref{conj:leibmanish}) \ 
Suppose $\overline{\{g(n) \Delta_{X^d}: \ n \in \Z\}}$ is not connected. 
By \cref{lem:disconnected_cannot}, there exist $k, j_1 \neq j_2$ such that $\overline{\{g(kn + j_1) \Delta_{X^d}: \ n \in \Z\}}$ and $\overline{\{g(kn + j_2) \Delta_{X^d}: \ n \in \Z\}}$ are disjoint. However, by \cref{lem:connected_pol_first}, the diagonal belongs to both sets which is a contradiction.

(\cref{conj:leibmanish} implies \cref{lem:connected_pol_first}) \ 
By \cite{Bergelson_Leibman_Lesigne08} we have $\overline{x} \in \orb_g(\overline{x})$ for every $\overline{x} \in \Delta_{X^d}$.
As a result,
\[
\Delta_{X^d} \subseteq \orb_g \Delta_{X^d}.
\]
Let $k \in \N$, $j \in \Z$ be arbitrary and let $h(n) = g(kn + j)$. By \cref{conj:leibmanish}, $\orb_g(\Delta_{X^d})$ is connected. By \cref{prop:orbit_closure_subnil_arith_same}, $\orb_h(\Delta_{X^d}) = \orb_g(\Delta_{X^d})$ and so $\orb_h(\Delta_{X^d})$ contains $\Delta_{X^d}$ as desired.
\end{proof}

\subsubsection{Proof of \texorpdfstring{\cref{mainthm_conjecture_are_equiv}}{Theorem E}}

\label{sec:all_equivalent_conjectures}

By \cref{thm:leibman_nil_equiv} it remains to prove \cref{conj:polynomial-odd-recurrence} is equivalent to \cref{lem:connected_pol_first}. We will prove that both are equivalent to the following conjectures.

\begin{Conjecture}[Syndetic version of \cref{conj:polynomial-odd-recurrence}]
\label{conj:polynomial-odd-recurrence-syndetic}
Fix $k\in \N$ and non-constant polynomials $p_1,\dots,p_d \in \Z[x]$ with $p_i(0) = 0$ for all $1 \le i \le d$. If $(X,T^k)$ is minimal, then for all open $\emptyset\neq U\subseteq X$ and all $0 \leq j < k$, the set
\[
\{ n \in \Z : \ U \cap T^{-p_1(kn+j)} U \cap \cdots \cap T^{-p_d(kn+j)} U \ne \emptyset \}
\]
is syndetic.
\end{Conjecture}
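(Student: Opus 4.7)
The plan is to establish that Conjecture~\ref{conj:polynomial-odd-recurrence-syndetic} is equivalent to Conjecture~\ref{conj:polynomial-odd-recurrence}, and thence (via \cref{thm:leibman_nil_equiv}, \cref{mainthm_ps_structure_theory_pol}, and a nilsystem analysis) to Conjecture~\ref{lem:connected_pol_first}. The key observation is that Conjecture~\ref{conj:polynomial-odd-recurrence-syndetic} is ostensibly stronger than Conjecture~\ref{conj:polynomial-odd-recurrence} in two ways --- its intersection includes an extra factor of $U$, and it asserts syndeticity rather than mere non-emptiness --- but both upgrades come essentially for free.

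The direction Conjecture~\ref{conj:polynomial-odd-recurrence-syndetic}~$\Rightarrow$~Conjecture~\ref{conj:polynomial-odd-recurrence} is routine. Given a tuple $p_1,\ldots,p_d \in \Z[x]$ with $p_i(0) = 0$, I would separate out any identically zero $p_i$ (each contributes a trivial factor $U$ to the intersection) and apply Conjecture~\ref{conj:polynomial-odd-recurrence-syndetic} to the remaining non-constant polynomials. The resulting syndetic set is in particular non-empty, so the corresponding element $m = kn+j \in k\Z+j$ lies in the return-time set of Conjecture~\ref{conj:polynomial-odd-recurrence}.

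The reverse direction is the key step. Fix non-constant pairwise distinct $p_1,\ldots,p_d \in \Z[x]$ with $p_i(0)=0$, a nonempty open $U \subseteq X$, and $0 \leq j < k$. I would first apply Conjecture~\ref{conj:polynomial-odd-recurrence} to the \emph{augmented} tuple $(0, p_1, \ldots, p_d)$, whose polynomials all vanish at $0$. This yields an $m = kn+j \in k\Z+j$ for which
\[
U \cap T^{-p_1(kn+j)} U \cap \cdots \cap T^{-p_d(kn+j)} U \neq \emptyset,
\]
so the set in Conjecture~\ref{conj:polynomial-odd-recurrence-syndetic} is non-empty. To promote non-emptiness to syndeticity, I would invoke \cref{lem_nonempty_implies_syndetic}: the tuple $\tilde q(n) = (0, p_1(kn+j), \ldots, p_d(kn+j))$ is essentially distinct because $p_i(kn+j) - p_l(kn+j) = (p_i - p_l)(kn+j)$ is non-constant (otherwise $p_i - p_l$ would be a nonzero constant contradicting $p_i(0) = p_l(0) = 0$), and $p_i(kn+j)$ is non-constant by hypothesis. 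The lemma then upgrades the non-empty return-time set $R_{\tilde q}(U,U,\ldots,U)$ to a syndetic one.

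The only non-trivial ingredient is \cref{lem_nonempty_implies_syndetic}, which is itself powered by Bergelson--McCutcheon's IP polynomial Szemer\'edi theorem and is already available to us. I do not expect this step to be the main obstacle; rather, the substantive work in completing the chain to \cref{conj:leibmanish_intro} will lie in reducing Conjecture~\ref{conj:polynomial-odd-recurrence-syndetic} to the class of nilsystems --- which is exactly where \cref{mainthm_ps_structure_theory_pol} and its non-piecewise-syndetic dichotomy come in, combined with \cref{lemma_pwssetdifference} to transfer syndeticity --- and then translating the resulting syndetic polynomial return-time statement on a connected nilfactor into a statement about orbit closures of the diagonal, which is the content of \cref{lem:connected_pol_first}.
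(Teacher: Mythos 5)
This statement is a conjecture, so neither the paper nor your proposal proves it outright; what is at stake is how it sits in the web of equivalences of \cref{thm:all_conjectures_equivalent}. Your plan is correct, and the interesting part is that your key implication --- \cref{conj:polynomial-odd-recurrence} $\Rightarrow$ \cref{conj:polynomial-odd-recurrence-syndetic} --- is obtained by a genuinely different and more elementary route than the paper's. You apply \cref{conj:polynomial-odd-recurrence} to the augmented tuple $(0,p_1,\dots,p_d)$ to get non-emptiness of $R_{\tilde q}(U,\dots,U)$ with $\tilde q(n)=(0,p_1(kn+j),\dots,p_d(kn+j))$, check essential distinctness (correctly: a constant difference vanishing at $0$ forces $p_i=p_l$, and WLOG the $p_i$ are pairwise distinct), and then invoke \cref{lem_nonempty_implies_syndetic} to upgrade non-emptiness to syndeticity; every step checks out. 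The paper instead never proves this implication directly: it closes the cycle \cref{conj:polynomial-odd-recurrence} $\Rightarrow$ \cref{conj:glasner-etal-totally-minimal} $\Rightarrow$ \cref{lem:connected_pol_first} $\Rightarrow$ \cref{conj:polynomial-odd-recurrence-syndetic}, with the last step running through \cref{mainthm_ps_structure_theory_pol}, \cref{lemma_pwssetdifference}, \cref{thm:main_nil_pol}, and Leibman's syndeticity theorem for orbit closures in nilsystems. Your shortcut isolates the ``nonempty $\Rightarrow$ syndetic'' upgrade as a soft consequence of Bergelson--McCutcheon and makes the equivalence of \cref{conj:polynomial-odd-recurrence} and \cref{conj:polynomial-odd-recurrence-syndetic} independent of the nilsystem analysis; it does not, however, save any of the substantive work, since the implication from \cref{lem:connected_pol_first} back to the recurrence conjectures (which is where Theorem~\ref{mainthm_ps_structure_theory_pol} and the orbit-closure analysis are truly needed) must still be carried out to complete the full equivalence with Leibman's conjecture, as you acknowledge at the end.
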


\begin{Conjecture}[\cref{conj:polynomial-odd-recurrence} for totally minimal nilsystems]
\label{conj:glasner-etal-totally-minimal}
If $(X,T)$ is totally minimal nilsystem, then for all polynomials $p_1,\dots,p_d$ on $\Z$ with $p_i(0)=0$ for all $1\leq i\leq d$, all $k\in \N,$ all open $\emptyset\neq U\subseteq X$, and all $0 \leq j < k$, there exists $n\in\N$ such that
\[
U \cap T^{-p_1(kn+j)}U\cap\cdots\cap T^{-p_d(kn+j)}U\neq\emptyset.
\]
\end{Conjecture}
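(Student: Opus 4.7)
The plan is to derive \cref{conj:glasner-etal-totally-minimal} from \cref{lem:connected_pol_first}, the along-progressions form of Leibman's conjecture. Since the stated claim is itself labeled as a conjecture, no unconditional proof is possible without resolving an open problem; the natural target is a short, self-contained reduction that isolates one link of the equivalence chain feeding into \cref{mainthm_conjecture_are_equiv}.

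Fix a totally minimal nilsystem $(X = G/\Gamma, T)$ with $Tx = ax$, polynomials $p_1, \ldots, p_d \in \Z[x]$ with $p_i(0) = 0$, $k \in \N$, $0 \leq j < k$, and a nonempty open $U \subseteq X$. After discarding any zero $p_i$ and identifying duplicates (which only drop redundant constraints), we may assume the $p_i$ are pairwise distinct and nonzero; since they all vanish at $0$, this makes them essentially distinct. The core idea is to pad the tuple with a zero polynomial and apply \cref{lem:connected_pol_first} in $X^{d+1}$, so that diagonal recurrence projected to the first coordinate yields the desired intersection point.

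Concretely, I would set $p_0 \equiv 0$ and define the polynomial sequence $h(n) = (a^{p_0(n)}, a^{p_1(n)}, \ldots, a^{p_d(n)}) \in G^{d+1}$. Since $p_0, p_1, \ldots, p_d$ all vanish at $0$, \cref{lem:connected_pol_first} applied to this tuple gives
\[
\Delta_{X^{d+1}} \subseteq \overline{\{h(kn+j)\Delta_{X^{d+1}} : n \in \Z\}}.
\]
Fix $x \in U$ and choose $\epsilon > 0$ small enough that the $\epsilon$-ball around $(x, \ldots, x)$ in $X^{d+1}$ lies inside $U^{d+1}$. Since $(x, \ldots, x) \in \Delta_{X^{d+1}}$ belongs to the orbit closure on the right, there exist $n \in \Z$ and $y \in X$ with
\[
\bigl(y,\, T^{p_1(kn+j)}y,\, \ldots,\, T^{p_d(kn+j)}y\bigr) \in U^{d+1},
\]
which witnesses $y \in U \cap T^{-p_1(kn+j)}U \cap \cdots \cap T^{-p_d(kn+j)}U$.

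To upgrade $n \in \Z$ to $n \in \N$, set $q_i(m) = p_i(km+j)$; the tuple $(q_1, \ldots, q_d)$ remains essentially distinct, because composing $p_i - p_j$ with the non-constant polynomial $km+j$ preserves non-constancy of the difference. By \cref{lem_nonempty_implies_syndetic}, the nonempty return-time set $\{m \in \Z : U \cap T^{-q_1(m)}U \cap \cdots \cap T^{-q_d(m)}U \neq \emptyset\}$ is syndetic, hence contains arbitrarily large positive integers. The main obstacle is, of course, that \cref{lem:connected_pol_first} is itself open: an unconditional proof of \cref{conj:glasner-etal-totally-minimal} would resolve a long-standing conjecture of Leibman. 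The value of the reduction above is that it cleanly separates the dynamical content---pad with a zero polynomial, apply Leibman, extract the diagonal point---from the conjectural input.
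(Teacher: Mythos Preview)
Your conditional reduction from \cref{lem:connected_pol_first} to \cref{conj:glasner-etal-totally-minimal} is correct. The padding idea (append $p_0 \equiv 0$ and work in $X^{d+1}$) together with the extraction of a diagonal witness is exactly the mechanism behind the paper's \cref{thm:main_nil_pol}; in the totally minimal case the nilmanifold is connected, so that proposition's reduction to connected components is vacuous and your argument is precisely its specialization.

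Where you differ from the paper is the route through the equivalence cycle in \cref{thm:all_conjectures_equivalent}. The paper never proves \cref{lem:connected_pol_first} $\Rightarrow$ \cref{conj:glasner-etal-totally-minimal} directly; instead it establishes \cref{lem:connected_pol_first} $\Rightarrow$ \cref{conj:polynomial-odd-recurrence-syndetic} for arbitrary minimal systems (using \cref{thm:main_nil_pol} together with the full strength of \cref{mainthm_ps_structure_theory_pol} to pass from the pronilfactor back up), and then observes that \cref{conj:polynomial-odd-recurrence-syndetic} $\Rightarrow$ \cref{conj:polynomial-odd-recurrence} $\Rightarrow$ \cref{conj:glasner-etal-totally-minimal} are immediate specializations. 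Your shortcut stays entirely inside nilsystems and avoids invoking \cref{mainthm_ps_structure_theory_pol}, which makes this single implication cleaner; the paper's longer detour buys the stronger intermediate statements for general minimal systems along the way. One small presentational point: when you invoke \cref{lem_nonempty_implies_syndetic} to upgrade to $n \in \N$, the essentially distinct tuple you actually need is $(0, q_1, \ldots, q_d)$, not $(q_1, \ldots, q_d)$, so you should also record that each $q_i$ is non-constant---which follows since each surviving $p_i$ is nonzero with $p_i(0)=0$.
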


We will show that all the conjectures above are equivalent. Before going to the proof, we need some lemmas.

\begin{lemma}\label{lem:change_polynomial}
    Let $p_1,\dots,p_d \in \Z[x]$ with $p_i(0)=0$ for all $1\leq i\leq d$, $M\in\N$, $a,b\in\Z$ with $a\neq 0$, and $q_i(n) \defeq p_i\big(M(an+b)\big)/M$ for all $1\leq i\leq d.$
    For all $1\leq i\leq d,$ there exist polynomials $\tilde p_1, \ldots, \tilde p_d \in \Z[x]$ with $\tilde p_i(0)=0$ such that for all $n \in \Z$,
    \[q_i(n)=\tilde p_i(an+b).\]
\end{lemma}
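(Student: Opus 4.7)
The plan is to define $\tilde p_i$ directly from $p_i$ by dividing out the factor of $M$ that is pulled in from the inside. The only content of the lemma is to check that this division produces a polynomial with integer coefficients and zero constant term, and the hypothesis $p_i(0)=0$ together with the factor $M$ appearing to at least the first power in every monomial of $p_i(My)$ makes this straightforward.

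Concretely, for each $1\le i\le d$, since $p_i(0)=0$, we may write
\[
p_i(x)=\sum_{j=1}^{k_i} c_{i,j}\,x^{j}
\]
with $c_{i,j}\in\Z$. Substituting $x=My$ and dividing by $M$ gives
\[
\frac{p_i(My)}{M}=\sum_{j=1}^{k_i} c_{i,j}\,M^{j-1}\,y^{j},
\]
which is a polynomial in $y$ with integer coefficients (because $j\ge 1$, so $M^{j-1}\in\N$) and with zero constant term. Define
\[
\tilde p_i(y):=\sum_{j=1}^{k_i} c_{i,j}\,M^{j-1}\,y^{j}\in\Z[x],
\]
so that $\tilde p_i(0)=0$. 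Setting $y=an+b$ yields
\[
\tilde p_i(an+b)=\frac{p_i\bigl(M(an+b)\bigr)}{M}=q_i(n)
\]
for every $n\in\Z$, as required.

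There is essentially no obstacle: the role of $a,b$ is purely that they appear through the substitution $y=an+b$ and never interact with the divisibility, and the hypothesis $a\neq 0$ is not used in this reduction (it matters only for the application of the lemma elsewhere in the paper, where one wishes $an+b$ to range over a genuine arithmetic progression).
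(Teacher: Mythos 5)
Your proof is correct and is essentially identical to the paper's: both write $p_i(x)=\sum_{j\ge 1}c_{i,j}x^j$ using $p_i(0)=0$, substitute $x=M(an+b)$, divide by $M$, and read off $\tilde p_i(y)=\sum_{j\ge 1}c_{i,j}M^{j-1}y^j$. Your additional remark that $a\neq 0$ plays no role in the proof itself is accurate.
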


\begin{proof}
    Let $d\in\N$ be the largest degree of the $p_i$'s, so we can write $p_i(n)=\sum_{\ell=1}^dc_{i,\ell}n^\ell$.
    Then 
    $$q_i(n)
    =
    \frac1M\sum_{\ell=1}^dc_{i,\ell}(M(an+b))^\ell
    =
    \sum_{\ell=1}^dc_{i,\ell}M^{\ell-1}(an+b)^\ell.$$
    Letting $\tilde p_i(n)=\sum_{\ell=1}^dc_{i,\ell}M^{\ell-1}n^\ell$ we have the conclusion.
\end{proof}

\begin{Proposition}
\label{thm:main_nil_pol}
Suppose \cref{lem:connected_pol_first} is true. Let $(X,T)$ be a minimal nilsystem with $X = G/\Gamma$ and $Tx = ax$. 
Assume that  $(X,T^k)$ minimal. 
Let $p_1, \ldots, p_d$ be non-constant polynomials on $\Z$ satisfying $p_i(0) = 0,$ for $1\leq i\leq d,$ and let $g(n) = 1_G \otimes a^{p_1(n)} \otimes \cdots \otimes a^{p_d(n)}$ be the corresponding polynomial sequence in $G$. 
If $\Delta_{X^{d+1}}$ denotes the diagonal of $X^{d+1}$, then, for every $j \in \Z/k\Z$,

\[
    \Delta_{X^{d+1}} \subseteq \overline{\{g(kn + j) \Delta_{X^{d+1}}: \ n \in \Z\}}.
\]
\end{Proposition}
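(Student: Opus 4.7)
The plan is to reduce the proposition to a statement about a totally minimal connected nilsystem, where \cref{lem:connected_pol_first} applies directly. Let $\ell$ denote the number of connected components of $X$, labelled $X_0, \dots, X_{\ell-1}$ with $T X_i = X_{i+1 \bmod \ell}$. The minimality of $(X, T^k)$ is equivalent to $(k, \ell) = 1$, and each $(X_i, T^\ell)$ is a connected minimal nilsystem, and hence totally minimal.

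Fix $x^* \in X_{i_0}$. To witness $(x^*, \dots, x^*) \in \overline{\{g(kn+j)\Delta_{X^{d+1}}: n \in \Z\}}$, I will restrict attention to those $n$ with $kn + j \equiv 0 \pmod{\ell}$: since each $p_i$ has integer coefficients and $p_i(0) = 0$, for any such $n$ and any $y \in X_{i_0}$ the point $a^{p_i(kn+j)} y$ remains in $X_{i_0}$. By the Chinese remainder theorem together with $(k, \ell) = 1$, the admissible values of $kn+j$ form an arithmetic progression $\{k\ell t + s : t \in \Z\}$, where $s \in \{0, \dots, k\ell - 1\}$ is the unique integer with $s \equiv 0 \pmod{\ell}$ and $s \equiv j \pmod{k}$. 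Setting $b = s/\ell \in \Z$, \cref{lem:change_polynomial} (applied with $M = \ell$ and $a = k$) produces polynomials $\tilde p_1, \dots, \tilde p_d \in \Z[x]$ satisfying $\tilde p_i(0) = 0$ and
\[
p_i(k\ell t + s) = \ell\, \tilde p_i(k t + b) \quad \text{for every } t \in \Z,
\]
so that $a^{p_i(k\ell t + s)}$ acts on $X_{i_0}$ as $(T^\ell)^{\tilde p_i(kt + b)}$.

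After this substitution, the desired containment reduces to the following statement inside the totally minimal nilsystem $(X_{i_0}, T^\ell)$: for every $x^* \in X_{i_0}$, the diagonal point $(x^*, \dots, x^*) \in \Delta_{X_{i_0}^{d+1}}$ lies in the orbit closure of $\Delta_{X_{i_0}^{d+1}}$ under the polynomial sequence $m \mapsto (1_G,\, (a^\ell)^{\tilde p_1(m)}, \dots, (a^\ell)^{\tilde p_d(m)})$ along $m \in k\Z + b$. This is exactly the conclusion of \cref{lem:connected_pol_first} applied to $(X_{i_0}, T^\ell)$ with the $(d+1)$-tuple of polynomials $(0, \tilde p_1, \dots, \tilde p_d)$, each vanishing at $0$, step $k$, and offset $b$; the zero polynomial is permitted since the only hypothesis on the polynomials in \cref{lem:connected_pol_first} is that they vanish at $0$.

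The argument involves no new recurrence input beyond the assumed conjecture; the main point demanding care is the coordinated set-up of the congruential restriction via CRT and the polynomial substitution via \cref{lem:change_polynomial}, so that the dynamics genuinely factor through $T^\ell$ on a single connected component. Once this bookkeeping is done, \cref{lem:connected_pol_first} is invoked on the nose.
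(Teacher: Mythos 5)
Your proof is correct and follows essentially the same route as the paper's: restrict to the subprogression where $kn+j\equiv 0\pmod{\ell}$, use \cref{lem:change_polynomial} to divide the polynomials by $\ell$, and invoke \cref{lem:connected_pol_first} on the totally minimal connected nilsystem $(X_{i_0},T^{\ell})$ with step $k$ and the shifted offset. The only cosmetic difference is that you argue on each component $X_{i_0}$ directly, whereas the paper works on the identity component $X_0$ and then uses the $(T\times\cdots\times T)$-invariance of the orbit closure to cover the whole diagonal.
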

\begin{proof}
Let $M$ be the number of connected components of $X$ and let $X_0$ be the component containing $1_X$. 
Then $(X_0, T^M)$ is totally minimal. 
Since $(X,T^k)$ is minimal, it follows that $\gcd(M, k) = 1$. 
It follows that there exists $t \in \Z/M\Z$ such that $kt + j \equiv 0 \bmod M$.

For $1\leq i \leq d$, consider the polynomials
\[
    q_i(n) = \frac{p_i(k(Mn + t) + j)}{M} = \frac{p_i(kMn + kt + j)}{M}.
\]
Since $kt + j \equiv 0 \bmod M$, by \cref{lem:change_polynomial}, there exist polynomials $\tilde{p_i}$ over $\Z$ with $\tilde{p_i}(0) = 0$ such that 
\[
    q_i(n) = \tilde{p_i}(kn + b),
\]
where $b = (kt + j)/M$.

Now letting $g=a^M,$ we have that $(X_0, g)$ is a totally minimal nilsystem and 
\[
    h(n) = 1_G \otimes g^{q_1(n)} \otimes \cdots \otimes g^{q_d(n)} = 1_G \otimes g^{\tilde{p_1}(kn + b)} \otimes \cdots \otimes g^{\tilde{p_d}(kn + b)}
\]
is a polynomial sequence satisfying the assumptions of \cref{lem:connected_pol_first} (because $\tilde p_i(0)=0$).
We deduce that
\[
    \Delta_{X_0^{d+1}} \subseteq \overline{\{h(n) \Delta_{X_0^{d+1}}: \ n \in \Z\}}.
\]
Since $\Delta_{X^{d+1}} =\bigcup_{\ell=0}^{M-1}(T\times\cdots\times T)^\ell\Delta_{X_0^{d+1}}$, 
\[
    \overline{\{h(n) \Delta_{X_0^{d+1}}: \ n \in \Z\}} \subseteq \overline{\{g(kn + j) \Delta_{X^{d+1}}: \ n \in \Z\}},
\]
and the $\{g(kn + j) \Delta_{X^{d+1}}: \ n \in \Z\}$ is invariant under $T\times\cdots\times T$, the conclusion follows.
\end{proof}

\cref{mainthm_conjecture_are_equiv} from the introduction is a consequence of the following theorem.

\begin{theorem}
\label{thm:all_conjectures_equivalent}
Conjectures~\ref{conj:polynomial-odd-recurrence}, \ref{conj:leibmanish}, \ref{lem:connected_pol_first}, \ref{conj:polynomial-odd-recurrence-syndetic}, and \ref{conj:glasner-etal-totally-minimal} are all equivalent.
\end{theorem}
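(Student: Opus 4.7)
The plan is to close the cycle
\[
\text{\cref{conj:polynomial-odd-recurrence-syndetic}}\Rightarrow\text{\cref{conj:polynomial-odd-recurrence}}\Rightarrow\text{\cref{conj:glasner-etal-totally-minimal}}\Rightarrow\text{\cref{lem:connected_pol_first}}\Rightarrow\text{\cref{conj:polynomial-odd-recurrence-syndetic}},
\]
and then fold in the equivalence \cref{lem:connected_pol_first}$\Leftrightarrow$\cref{conj:leibmanish} already established in \cref{thm:leibman_nil_equiv}. Three of the four links are essentially by definition. The implication \cref{conj:polynomial-odd-recurrence-syndetic}$\Rightarrow$\cref{conj:polynomial-odd-recurrence} holds because syndetic sets are nonempty and the set in \cref{conj:polynomial-odd-recurrence-syndetic} sits inside the one in \cref{conj:polynomial-odd-recurrence} after the reparameterization $n\mapsto kn+j$. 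The implication \cref{conj:polynomial-odd-recurrence}$\Rightarrow$\cref{conj:glasner-etal-totally-minimal} is a specialization: totally minimal nilsystems satisfy $(X,T^k)$-minimality for every $k$, and applying \cref{conj:polynomial-odd-recurrence} to the augmented tuple $(0,p_1,\dots,p_d)$ yields exactly the conclusion of \cref{conj:glasner-etal-totally-minimal}. For \cref{conj:glasner-etal-totally-minimal}$\Rightarrow$\cref{lem:connected_pol_first}, unfolding the diagonal condition $\Delta_{X^d}\subseteq\overline{\{g(kn+j)\Delta_{X^d}:n\in\Z\}}$ at a point $x\in X$ with open neighborhood $W$ reduces to producing $n\in\Z$ and $y\in X$ with $T^{p_i(kn+j)}y\in W$ for $i=1,\dots,d$, and \cref{conj:glasner-etal-totally-minimal} supplies such a $y$ (lying even inside $W$).

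The substantive link is \cref{lem:connected_pol_first}$\Rightarrow$\cref{conj:polynomial-odd-recurrence-syndetic}, which lifts nilsystem information to arbitrary minimal systems via \cref{mainthm_ps_structure_theory_pol}. Fix a minimal $(X,T)$ with $(X,T^k)$ minimal, non-constant pairwise distinct $p_1,\dots,p_d$ with $p_i(0)=0$, a nonempty open $U\subseteq X$, and $j\in\Z$. Put $\bar q\defeq(0,p_1,\dots,p_d)\in\Pol_{d+1}$ and let $\pi\colon X\to X_\infty$ denote the projection to the maximal $\infty$-step pronilfactor. By \cref{mainthm_ps_structure_theory_pol} the difference $R_{\bar q}((\pi U)^\circ,\dots,(\pi U)^\circ)\setminus R_{\bar q}(U,\dots,U)$ is not piecewise syndetic. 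Using \cref{rmk_open_set_in_an_inverse_limit}, choose a nilsystem factor $\phi\colon X_\infty\to Y$ and a nonempty open $V\subseteq Y$ with $\phi^{-1}(V)\subseteq(\pi U)^\circ$; then $(Y,T^k)$ is minimal. \cref{thm:main_nil_pol}, which is precisely where \cref{lem:connected_pol_first} is invoked, produces some $n\in\Z$ with $kn+j\in R_{\bar q}(V,\dots,V)$, and \cref{lem_nonempty_implies_syndetic} applied in $(Y,T)$ to the essentially distinct tuple $(0,p_1(kn+j),\dots,p_d(kn+j))$ upgrades nonemptiness to syndeticity of $\{n\in\Z:kn+j\in R_{\bar q}(V,\dots,V)\}$. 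Lifting points through $\phi$ yields $R_{\bar q}(V,\dots,V)\subseteq R_{\bar q}((\pi U)^\circ,\dots,(\pi U)^\circ)$, and hence the syndeticity of $S\defeq\{n:kn+j\in R_{\bar q}((\pi U)^\circ,\dots,(\pi U)^\circ)\}$; meanwhile $S\setminus B$, with $B\defeq\{n:kn+j\in R_{\bar q}(U,\dots,U)\}$, is the preimage under $n\mapsto kn+j$ of the non-piecewise-syndetic set from \cref{mainthm_ps_structure_theory_pol}, and hence itself not piecewise syndetic. \cref{lemma_pwssetdifference} then forces $B$ to be syndetic, which is exactly \cref{conj:polynomial-odd-recurrence-syndetic}.

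The main obstacle is the bookkeeping in the preceding paragraph: correctly tracking the essentially distinct tuples needed for \cref{lem_nonempty_implies_syndetic}, lifting the containment $R_{\bar q}(V,\dots,V)\subseteq R_{\bar q}((\pi U)^\circ,\dots,(\pi U)^\circ)$ through $\phi$, and verifying that the affine pullback $n\mapsto kn+j$ preserves non-piecewise-syndeticity. The last assertion reduces to a short direct argument: were the pullback piecewise syndetic, the corresponding arbitrarily long intervals in $\Z$ would push forward to arbitrarily long APs of step $k$ each of whose elements lies within $\ell k$ of the target set, and interleaving across neighboring residues modulo $k$ would then yield arbitrarily long intervals in $\Z$ within bounded distance of the target, contradicting its non-piecewise-syndeticity.
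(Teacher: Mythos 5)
Your proof is correct and follows the same cycle of implications as the paper -- the two easy specializations, the equivalence \cref{lem:connected_pol_first}$\Leftrightarrow$\cref{conj:leibmanish} via \cref{thm:leibman_nil_equiv}, and the two substantive links -- with \cref{thm:main_nil_pol} and \cref{mainthm_ps_structure_theory_pol} carrying the load exactly as they do in the paper. You deviate in three sub-arguments, all legitimately. For \cref{conj:glasner-etal-totally-minimal}$\Rightarrow$\cref{lem:connected_pol_first} you give a direct pointwise approximation (apply the conjecture to a small neighborhood $W$ of $x$ and observe $g(kn+j)(y,\dots,y)\in W^d$), whereas the paper runs a Baire-category argument producing a dense $\gd$ set of points recurrent for $g(kn+j)$ along the diagonal; your version is shorter and suffices, since membership of $(x,\dots,x)$ in the closure only requires \emph{some} nearby diagonal point to return, not the point $x$ itself. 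For the syndeticity upgrade in \cref{lem:connected_pol_first}$\Rightarrow$\cref{conj:polynomial-odd-recurrence-syndetic}, you invoke \cref{lem_nonempty_implies_syndetic} (Bergelson--McCutcheon) on the essentially distinct tuple $(0,p_1(k\cdot+j),\dots,p_d(k\cdot+j))$, while the paper cites Leibman's Corollary 1.9 on syndeticity of return times to open subsets of the orbit closure; your choice has the small advantage of reusing a lemma already in the paper. Finally, you apply \cref{mainthm_ps_structure_theory_pol} to the tuple $(0,p_1,\dots,p_d)$ and pull the non-piecewise-syndetic difference set back through $n\mapsto kn+j$, whereas the paper absorbs the reparameterization into the polynomials $q_i(n)=p_i(kn+j)$ before applying Theorem~A, avoiding the pullback step; your interleaving argument that affine preimages of non-piecewise-syndetic sets are non-piecewise-syndetic is correct, so both routes close. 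The only point worth making explicit is the harmless reduction, used implicitly when you assume the $p_i$ pairwise distinct, that duplicates among the $p_i$ can be discarded (since $p_i(0)=p_j(0)=0$ forces $p_i-p_j$ constant to mean $p_i=p_j$).
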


\begin{proof}
The implications \cref{conj:polynomial-odd-recurrence-syndetic} $\Rightarrow$ \cref{conj:polynomial-odd-recurrence} $\Rightarrow$ \cref{conj:glasner-etal-totally-minimal} are immediate, and Conjectures \ref{conj:leibmanish} and \ref{lem:connected_pol_first} were already shown to be equivalent in \cref{thm:leibman_nil_equiv}. We will finish the proof by showing that \cref{conj:glasner-etal-totally-minimal} implies \cref{lem:connected_pol_first} and that \cref{lem:connected_pol_first} implies \cref{conj:polynomial-odd-recurrence-syndetic}.\\

(\cref{conj:glasner-etal-totally-minimal} $\Rightarrow$ \cref{lem:connected_pol_first}) \ 
Fix a totally minimal nilsystem $(X, T)$ with $X = G/\Gamma$ and $T x = ax$ and polynomials $p_1,\dots,p_d$ over $\Z$ with $p_i(0) = 0$ for all $1 \le i \le d$, and write $g(n) = (a^{p_1(n)},\dots,a^{p_d(n)})$.
Fix $k \in \N$ and $0 \le j < k$.
For each $\ell \in \N$ we break $X$ into finitely many open balls of radius $\epsilon < \tfrac{1}{\ell}$ and conclude from \cref{conj:glasner-etal-totally-minimal} that
\[
R_\ell := \Big\{x\in X: \ (\exists n\in\N)\ d\big((x,\dots,x), g(kn+j)(x,\dots,x)\big)<1/\ell\Big\}
\]
is open and $\epsilon$-dense.
Since $\epsilon$ is arbitrary, $R_\ell$ is an open dense set.
By Baire category, the intersection $R:=\bigcap_\ell R_\ell$ is dense.
Fix $y \in X$ and let $(x_i)_{i \in \N}$ be a sequence in $R$ converging to $y$.
For each $i\in\N$ let $n_i\in\N$ be such that $d((x_i,\dots,x_i),g(kn_i + j)(x_i,\dots,x_i))<1/i$. 
It follows that
\[
g(kn_i + j)(x_i,\dots,x_i) \to (y,\dots,y)
\]
as $i\to\infty$. As $y \in X$ was arbitrary, we have the conclusion of \cref{lem:connected_pol_first}.\\

(\cref{lem:connected_pol_first} $\Rightarrow$ \cref{conj:polynomial-odd-recurrence-syndetic}) \ 
Fix $(X,T)$ minimal and fix $k \in \N$ with $(X,T^k)$ minimal.
Fix also $\emptyset \ne U \subseteq X$ open and $0 \le j < k$.
Put $q_i(n) = p_i(kn+j)$ for all $1 \le i \le d$ and write $q = (q_1,\dots,q_d)$.
We need to prove that $R_q(U,\dots,U)$ is syndetic.

Let $(X_{\infty}, T)$ be the $\infty$-step pronilfactor of $(X, T)$ and write $\pi: X \to X_{\infty}$ for the factor map.
From \cref{mainthm_ps_structure_theory_pol} we conclude that
\[
    R_q((\pi U)^\circ, \ldots, (\pi U)^\circ) \setminus R_q(U, \ldots, U) 
\]
is not piecewise syndetic.
From \cref{lemma_pwssetdifference} it now suffices to prove $R_q((\pi U)^\circ, \ldots, (\pi U)^\circ)$ is syndetic.

The map $\pi$ is semi-open and so $\pi U$ has a nonempty interior; in particular $(\pi U)^\circ$ is a nonempty, open subset of $X_{\infty}$. The space $X_{\infty}$ is an inverse limit of nilsystems so (cf.\ \cref{rmk_open_set_in_an_inverse_limit}) there is an nilsystem factor of $X_{\infty}$, say $(Y, T)$, and a nonempty, open $V \subseteq Y$ such that $\phi^{-1}(V) \subseteq (\pi U)^\circ$ where $\phi: X_{\infty} \to Y$ is the factor map.
Thus $R_q(V, \ldots, V) \subseteq R_q((\pi U)^\circ, \ldots, (\pi U)^\circ)$ and it suffices to prove $R_q(V, \ldots, V)$ is syndetic.

The system $(Z,T^k)$ is minimal as a factor of $(X,T^k)$.
\cref{thm:main_nil_pol} implies
\[
\Delta_{Z^d} \subseteq \overline{\{g(n) \Delta_{Z^d}: \ n \in \Z\}},
\]
where  $T(z) = az$ and $g(n) = (a^{q_1(n)},\dots,a^{q_d(n)})$.
Since
\[
\Delta_{Z^d} \cap V^d = \{(z, \ldots, z): \ z \in V\} \neq \emptyset
\]
it follows that $\overline{\{g(n) \Delta_{Z^d}: \ n \in \Z\}} \cap V^d$ is a nonempty, open subset of $\overline{\{g(n) \Delta_{Z^d}: \ n \in \Z\}}$. Therefore, by \cite[Corollary 1.9]{Leibman05b}, the set
\[
    S = \{n \in \Z: \ \Delta_{X^d} \cap g(n)^{-1} V^d \neq \emptyset\} = \{n \in \Z: \ g(n) \Delta_{Z^d} \cap V^d \neq \emptyset\} 
\]
is syndetic. This set $S$ is exactly $R_q(V, \ldots, V)$ and so we are done.
\end{proof}

\section{Further discussion and open questions}
\label{sec_further}

In this section, we use the assumptions and conclusions in \cref{mainthm_ps_structure_theory_pol} to generate further discussion and open questions.  We address the role of essential distinctness and the $\infty$-step pronilfactor (\cref{sec_equicont_factor}); the role of the set interiors (\cref{sec_interiors_role}); the invertibility of $(X,T)$ (\cref{sec_non_invert}); piecewise syndeticity as a measure of largeness (\cref{sec_density_vs_ps}); and the possibility of an ergodic-theoretic analogue (\cref{sec_ergodic_analogue}).

\subsection{Essential distinctness and the infinite step pronilfactor}
\label{sec_equicont_factor}

The essential distinctness assumption on $p \in \Z[x]^d$ in Theorems \ref{thm_qiu_eqiuvalent_statement} and \ref{mainthm_ps_structure_theory_pol} is necessary. Indeed, suppose $d=2$ and $p_2 - p_1 = m$ for some $m\in\Z$. Let $(X, T)$ be a topologically weakly mixing system and $U_1, U_2 \subseteq X$ be nonempty, open sets such that $U_1 \cap T^{-m} U_2 = \emptyset$. We see that
\[
    R_p(U_1, U_2) = \{n \in \Z: \ T^{-p_1(n)} U_1 \cap T^{-p_2(n)} U_2 \neq \emptyset\} = \{n \in \Z: \ U_1 \cap T^{-(p_2(n) - p_1(n))} U_2 \neq \emptyset\} = \emptyset.
\]
On the other hand, since $(X,T)$ is weakly mixing, the factor $(X_{\infty},T)$ is the trivial system, so $\pi U_1=\pi U_2=X_\infty$ and $R_p((\pi U_1)^{\circ}, \allowbreak (\pi U_2)^{\circ}) = \Z$.

The example in the following proposition shows that \cref{mainthm_ps_structure_theory_pol} is no longer true if the $\infty$-step pronilfactor of $(X,T)$ is replaced by the system's maximal equicontinuous factor (recall the definition from \cref{sec_prelims_top_dyn}).

\begin{proposition}
\label{prop_max_equicont_factor_does_not_suffice}
There exists a minimal $2$-step nilsystem $(X, T)$ and a nonempty, open set $U \subseteq X$ such that if $\pi: X \to Z$ is the factor map to the maximal equicontinuous factor, then
\[
    R((\pi U)^\circ, (\pi U)^\circ, (\pi U)^\circ) \setminus R(U, U, U)
\]
is syndetic.
\end{proposition}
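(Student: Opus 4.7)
I would take $(X,T)$ to be the Heisenberg nilmanifold with a minimal translation. Let $H$ be the three-dimensional real Heisenberg group under $(a,b,c)(a',b',c') = (a+a',b+b',c+c'+ab')$, let $\Gamma = \Z^3$, set $X = H/\Gamma$, and let $T$ be translation by $g = (\alpha,\beta,0)$ with $\{1,\alpha,\beta,\alpha\beta\}$ linearly independent over $\Q$ (say $\alpha = \sqrt{2}$, $\beta = \sqrt{3}$). Then $(X,T)$ is a minimal $2$-step nilsystem with maximal equicontinuous factor $\pi\colon X \to \T^2$ given by $\pi(a,b,c) = (a,b)$, and iteration yields
\[
T^n(a,b,c) = \bigl(a+n\alpha,\; b+n\beta,\; c+n\alpha b + \tbinom{n}{2}\alpha\beta\bigr).
\]
The underlying mechanism driving the counterexample is that $3$-term AP recurrence in $(X,T)$ genuinely sees the quadratic term $\binom{n}{2}\alpha\beta$ in the central (fibre) direction of the circle bundle $X \to \T^2$, and this term is invisible from the equicontinuous factor.

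For a carefully chosen small open set $U \subset X$ (a small neighbourhood of the identity coset in a suitable trivialisation of the central circle bundle; see the obstacle below), the plan is to establish
\[
R(U,U,U) \subseteq B \quad \text{and} \quad R((\pi U)^\circ,(\pi U)^\circ,(\pi U)^\circ) \supseteq A,
\]
where $A = \{n : \|n\alpha\|,\|n\beta\| < \epsilon\}$ is the natural Bohr set of linear constraints coming from the equicontinuous factor, and $B \subseteq A$ imposes the additional quadratic constraint $\|n^2\alpha\beta - \delta_n\| < \eta$ extracted from the second-difference identity $c_3 - 2c_2 + c_1 = n^2\alpha\beta$ for the Mal'cev central coordinates of $(x,T^n x,T^{2n}x)$. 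Consequently $A \setminus B \subseteq R((\pi U)^\circ,\ldots) \setminus R(U,U,U)$, and it suffices to show $A \setminus B$ is syndetic. I would do this by realising $A \setminus B$ as a set of return times of the origin in an auxiliary minimal $2$-step nilsystem: the skew translation $S\colon \T^3 \to \T^3$, $S(u,v,w) = (u+\alpha,\,v+\beta,\,w + 2\alpha v + \alpha\beta)$, satisfies $S^n(0,0,0) = (n\alpha,n\beta,n^2\alpha\beta)$ by a direct induction and is itself realisable as a translation on a Heisenberg-type nilmanifold; Leibman's polynomial equidistribution theorem, together with the linear independence of $\{1,\alpha,\beta,\alpha\beta\}$, yields that $S$ is minimal on $\T^3$. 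The open set $W \subset \T^3$ encoding the defining conditions of $A \setminus B$ (linear constraints on the first two coordinates and violation of the quadratic constraint on the third) is nonempty, so by minimality the $S$-return times $\{n : S^n(0,0,0) \in W\}$ form a syndetic set contained in $A \setminus B$.

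The main obstacle is the precise choice of $U$. On the universal cover $H$ the second-difference identity $c_3 - 2c_2 + c_1 = n^2 \alpha \beta$ is immediate from the explicit formula for $T^n$, but on $X = H/\Gamma$ the reduction modulo $\Gamma$ introduces additive correction terms — products of small Mal'cev coordinates with integers of size $O(|n\beta|)$ — which can be arbitrarily large mod $1$ and threaten to absorb the quadratic constraint entirely. A naive coordinate "Mal'cev box" therefore does not suffice. To get a clean, surviving quadratic constraint on $R(U,U,U)$ one must choose $U$ adapted to a horizontal trivialisation of the central circle bundle — for example a small neighbourhood contained in a local section $(a,b) \mapsto (a,b,f(a,b))$ rather than in the flat Mal'cev $c$-coordinate — so that the $\Gamma$-corrections remain uniformly bounded and the identity $\|n^2\alpha\beta - \delta_n\| < \eta$ can be extracted with $\delta_n, \eta$ small enough to leave a syndetic $A \setminus B$. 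Executing this bookkeeping precisely is the technical heart of the argument.
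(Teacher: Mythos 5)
Your strategy is exactly the one the paper uses: detect the quadratic Weyl term via the second difference of the fibre coordinate along a $3$-term progression, observe that the equicontinuous factor only imposes the linear Bohr conditions, and realise the difference set as the return times of a point to a fixed open set in an auxiliary minimal skew product $S$ with $S^n(0,0,0)=(n\alpha,n\beta,n^2\alpha\beta)$ (the paper's auxiliary system is $S(x,y)=(x+\alpha,\,y+2x+\alpha)$ on $\T^2$, with $S^n(0,0)=(n\alpha,n^2\alpha)$). However, your write-up has the gap you yourself flag, and it is a real one: on $X=H/\Z^3$ the central Mal'cev coordinate is only defined up to the $\Gamma$-action, which shears it by integer multiples of the base coordinates; the resulting corrections to the identity $c_3-2c_2+c_1=n^2\alpha\beta$ are of size $O(|n|)$ times the diameter of $U$ and are \emph{not} uniformly small over $n$, so the containment $R(U,U,U)\subseteq B$ is not established for any $U$ you have actually specified. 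The proposed fix via a local section of the circle bundle is plausible but is precisely the ``technical heart'' you leave unexecuted, and the $n$-dependent shift $\delta_n$ in your quadratic constraint also prevents $A\setminus B$ from being cleanly written as $\{n: S^n(0,0,0)\in W\}$ for a fixed open $W$.

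The paper dissolves this difficulty by a better choice of model: it takes $X=\T^2$ with $T(x,y)=(x+\alpha,\,y+x)$, which is already a minimal $2$-step nilsystem, and $U$ a small ball around $(0,0)$. Here the fibre coordinate is an honest circle coordinate, so the second difference of the three fibre values is computed entirely in $\R/\Z$ and equals $n^2\alpha$ exactly, with no correction terms; the triangle inequality for $\|\cdot\|$ then gives $R(U,U,U)\subseteq\{n:\|n^2\alpha\|<\epsilon\}$ immediately, while $\{n:\|n\alpha\|<\epsilon/8\}\subseteq R((\pi U)^\circ,(\pi U)^\circ,(\pi U)^\circ)$. I recommend you replace the Heisenberg model by this skew product (or, equivalently, carry out your section construction explicitly); with that substitution your argument is complete and coincides with the paper's.
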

\begin{proof}
For $x \in \T = \R/\Z$, let $\lVert x \rVert$ denote the distance from $x$ to $0$.
Let $X = \T \times \T$ and $T: X \to X$ be defined by $T(x, y) = (x + \alpha, y + x)$ where $\alpha$ is an irrational number. Then
\[
    T^n(x, y) = \left(x + n \alpha, y + n x + \frac{n(n-1)}{2} \alpha\right).
\]

Fix $\epsilon > 0$ and define $U \subseteq X$ to be the ball centered at $(0, 0)$ having radius $\epsilon/4$. 
Suppose $n \in R(U, U, U)$ and let $(x_0, y_0) \in U \cap T^{-n} U \cap T^{-2n} U$. Then
\begin{align*}
        \max\{\lVert x_0 \rVert, \lVert y_0 \rVert\} &< \epsilon/4, \\
    \max\left\{\lVert x_0 + n \alpha \rVert, \left\lVert y_0 + n x_0 + \frac{n(n-1)}{2} \alpha \right\rVert\right\} &< \epsilon/4, \text{ and} \\
    \max\left\{\lVert x_0 + 2n \alpha \rVert,
    \left\lVert y_0 + 2n x_0 + \frac{2n(2n-1)}{2} \alpha \right\rVert\right\} &< \epsilon/4.
\end{align*}
It follows that
\[
    \lVert n^2 \alpha \rVert = \left\lVert \left(y_0 + 2n x_0 + \frac{2n(2n-1)}{2} \alpha\right) - 2\left(y_0 + n x_0 + \frac{n(n-1)}{2} \alpha\right) + y_0\right\rVert < \epsilon.
\]
Therefore,
\[
    R(U, U, U) \subseteq Q := \{n \in \Z: \ \lVert n^2 \alpha \rVert < \epsilon\}.
\]

The maximal equicontinuous factor of $(X, T)$ is the rotation by $\alpha$ on the first coordinate and so $\pi(U)^\circ = \pi(U)= \{x \in \T: \ \lVert x \rVert < \epsilon/4\}$. Let 
\[
    B = \{n \in \Z: \ \lVert n \alpha \rVert < \epsilon /8\}.
\]
Then for $n \in B$,
\[
    \max\{\lVert 0 \rVert, \lVert 0 + n \alpha \rVert, \lVert 0 + 2 n \alpha \rVert\} < \epsilon/4.
\]
As a result, $n \in R((\pi U)^\circ, (\pi U)^\circ, (\pi U)^\circ)$ and since $n$ is arbitrary, $B \subseteq R((\pi U)^\circ, (\pi U)^\circ, (\pi U)^\circ)$.

To show $R((\pi U)^\circ, (\pi U)^\circ, (\pi U)^\circ) \setminus R(U, U, U)$ is syndetic, it remains to show that $B \setminus Q$ is syndetic. Note that 
\[
    B \setminus Q = \{n \in \Z: \ \lVert n \alpha \rVert < \epsilon/8, \lVert n^2 \alpha \rVert \geq \epsilon\}.
\]
Consider the system $(X, S)$ with 
\[
    S(x, y) = (x + \alpha, y + 2x + \alpha)
\]
and the set
\[
    V = \{(x, y) \in X: \lVert x \rVert < \epsilon/8, \lVert y \rVert > \epsilon\}.
\]
Then $V$ is a nonempty open subset of $X$. We have
\[
    S^n(0, 0) = (n \alpha, n^2 \alpha)
\]
and therefore
\[
    B \setminus Q \supseteq \{n \in \Z: S^n (0, 0) \in V\}.
\]
Since the system $(X, S)$ is minimal, $B \setminus Q$ is syndetic. 
\end{proof}

While the maximal equicontinuous factor does not suffice to reach the conclusions in \cref{mainthm_ps_structure_theory_pol}, it is natural to speculate -- especially in light of Ye and Yu's result \cite[Theorem~A]{yeyu} -- that a pronilfactor of sufficiently high-order does.

\begin{question}
\label{quest_suff_high_order_factor_suffices}
    Given $d \in \N$ and an essentially distinct polynomial tuple $p \in \Z[x]^d$, is there $k \in \N$ such that for all minimal, invertible systems $(X,T)$ and all nonempty, open $U_1, \ldots, U_d \subseteq X$, the set
    \[
        R_p\big((\pi U_1)^\circ, \ldots, (\pi U_d)^\circ\big) \setminus R_p(U_1, \ldots, U_d)
    \]
    is not piecewise syndetic, where $\pi: X \to X_k$ denotes the factor map to the maximal $k$-step pronilfactor of $(X,T)$?
\end{question}

The methods we employ in the proof of \cref{mainthm_ps_structure_theory_pol} do not seem to be easily modified to give a positive answer to \cref{quest_suff_high_order_factor_suffices}.  Specifically, it is in \cref{lemma_special_sets_of_returns_form_a_filterpol} that we encounter arbitrarily high-order return-time sets that then necessitate arbitrarily high-order pronilfactors.

\subsection{The role of the set interiors}
\label{sec_interiors_role}

The set interiors $(\pi U_i)^\circ$ appearing in the statement of \cref{mainthm_ps_structure_theory_pol} originate from the almost 1--1 extensions necessary to extend a semiopen factor map to an open one; see, specifically, \cref{lemma_multiple_set_returns_lift_in_almost_11_extensions}.  It remains unclear to us whether or not their appearance is necessary. The following question asks for a strengthening of \cref{mainthm_ps_structure_theory_pol} in which each set $(\pi U_i)^\circ$ is replaced by the larger set $\pi U_i$.  The sets $\pi U_i$ are generally not open, but we use the notation set out in \eqref{eqn_main_rp_notation} nonetheless.

\begin{question}
\label{quest_remove_interiors}
    Let $(X,T)$ be a minimal and invertible topological dynamical system. Denote by $(X_\infty,T)$ its maximal $\infty$-step pro-nilfactor, and let $\pi\colon X\to X_\infty$ be the associated factor map.
    For all nonempty, open $U_1, \ldots, U_d \subseteq X$ and all essentially distinct $p \in \mathbb{Z}[x]^d$, is it true that the set
    \[
        R_p\big(\pi U_1, \ldots, \pi U_d\big) \setminus R_p(U_1, \ldots, U_d)
    \]
    is not piecewise syndetic?
\end{question}

In attempting to answer \cref{quest_remove_interiors} using \cref{mainthm_ps_structure_theory_pol}, one is naturally led to consider the inclusion
\[R_p\big(\pi U_1, \ldots, \pi U_d\big) \supseteq R_p\big((\pi U_1)^\circ, \ldots, (\pi U_d)^\circ\big)\]
and ask how much larger the set $R_p\big(\pi U_1, \ldots, \pi U_d\big)$ might be.  Since the factor map from a minimal system to its $\infty$-step pronilfactor is semiopen but generally not open, the inclusion $(\pi U_i)^\circ \subseteq \pi U_i$ is generally strict.
Nevertheless, it can be shown that $(\pi U_i)^\circ$ is always a dense subset of $\pi U_i$, that is, $\pi U_i \subseteq \overline{(\pi U_i)^\circ}$. Thus, a natural approach to answering \cref{quest_remove_interiors} is to consider the difference between the times of visits of an array of open sets and their closures.  It can be shown that a positive answer to the following question would yield a positive answer to \cref{quest_remove_interiors}.

\begin{question}
\label{quest_orbits_of_diagonal_along_thick_sets_in_nilsystems_1}
Let $(X,T)$ be a minimal pronilsystem and $d\in\N$.
For all nonempty, open $U_1, \ldots, U_d \subseteq X$ and all essentially distinct $p \in \Z[x]^d$, is it true that the set
\[
R_p\big(\overline{U_1}, \ldots, \overline{U_d}\big) \setminus R_p(U_1, \ldots, U_d  )
\]
is not piecewise syndetic?
\end{question}

As far as we know, it is possible that \cref{quest_orbits_of_diagonal_along_thick_sets_in_nilsystems_1} has a positive answer in every minimal system.

\subsection{Results in the non-invertible setting}
\label{sec_non_invert}

It is natural to wonder how important the invertibility assumption is on the system $(X,T)$ in \cref{mainthm_ps_structure_theory_pol}.  One issue arises immediately: to our knowledge, there is no recorded definition of the $\infty$-step pronilfactor of a non-invertible system. More to the point, it appears that there has been no attempt in the literature to define the regionally proximal relations for $\N$-actions.

We will assume for the remainder of this subsection that a reasonable definition has been made for the $\infty$-step pronilfactor of a not-necessarily-invertible minimal system.  Given such a system $(X,T)$, nonempty, open sets $U_1,\ldots,U_d \subseteq X$, and a polynomial tuple $p = (p_1, \ldots, p_d) \in \Z[x]^d$ with the property that each $p_i$ is eventually positive, we define
\begin{align*}
    R^+_p(U_1,\dots,U_d):=\big\{n\in\N : \ T^{-p_1(n)} U_1\cap\cdots\cap T^{-p_d(n)}U_d\neq\emptyset \big\}.
\end{align*}
The following question asks for a non-invertible analogue of \cref{mainthm_ps_structure_theory_pol}.

\begin{question}
\label{quest_non_invert_thm_A}
    Let $(X,T)$ be a minimal, not-necessarily-invertible topological dynamical system. Denote by $(X_\infty,T)$ its maximal $\infty$-step pro-nilfactor, and let $\pi\colon X\to X_\infty$ be the associated factor map.
    For all $d \in \N$, all nonempty, open $U_1, \ldots, U_d \subseteq X$, and all essentially distinct $p \in \mathbb{Z}[x]^d$ with each $p_i$ eventually positive, is it true that the difference set
    \[
        R^+_p\big((\pi U_1)^\circ, \ldots, (\pi U_d)^\circ\big) \setminus R^+_p(U_1, \ldots, U_d)
    \]
    is not piecewise syndetic?
\end{question}

To give a positive answer to \cref{quest_non_invert_thm_A}, it would be natural to follow the proof of \cref{mainthm_ps_structure_theory_pol}.  Except for Qiu's theorem (recall \cref{thm_qiu_eqiuvalent_statement}), which is not known for non-invertible systems, it appears that all of the auxiliary results in \cref{sec_poly_ps_structure} necessary for the proof can be shown to hold in the non-invertible setting.

\subsection{Positive upper Banach density and piecewise syndeticity}
\label{sec_density_vs_ps}

The {\it upper Banach density} of a set $A \subseteq \Z$ is
\[d^*(A) = \limsup_{N \to \infty} \max_{n \in \Z} \frac{|A \cap \{n + 1, \ldots, n + N\}|}{N}.\]
Piecewise syndetic sets have positive upper Banach density; the set of squarefree integers demonstrates that the converse is false.  Given upper Banach density's importance as a tool in the development of ergodic Ramsey theory, it is natural to ask whether the role of piecewise syndeticity as a notion of largeness can be replaced by positive upper Banach density in \cref{mainthm_ps_structure_theory_pol}.

\begin{question}
\label{ques:non-ps-by-zero-density}
Does \cref{mainthm_ps_structure_theory_pol} hold if, in the conclusion, ``is not piecewise syndetic'' is replaced by ``has zero upper Banach density''?
\end{question}

We believe the answer to \cref{ques:non-ps-by-zero-density} is negative, perhaps already in the case of single recurrence.

\begin{question}\label{ques:sepecial_case}
Does there exist a minimal topological system $(X, T)$, nonempty, open sets $U_1, U_2 \subseteq X$, and a non-constant polynomial $p \in \Z[x]$ such that the set
\[
\big\{ n \in \Z : U_1 \cap T^{-p(n)} U_2 = \emptyset \text{ and } (\pi U_1)^\circ \cap T^{-p(n)} (\pi U_2)^\circ \neq \emptyset \big\} 
\]
has positive upper Banach density? Here $\pi: X \to X_{\infty}$ is the projection to the topological $\infty$-pronilfactor of $(X,T)$.
\end{question}

\subsection{An ergodic-theoretic analogue of \texorpdfstring{\cref{mainthm_ps_structure_theory_pol}}{Theorem A}}
\label{sec_ergodic_analogue}

The use of structured factors of topological dynamical systems in the study topological multiple recurrence, initiated in~\cite{MR1303514}, has seen significant development in recent years~\cite{glasner_huang_shao_weiss_ye_2020} motivated by development of the structure theory for measure-preserving systems by Host-Kra and Ziegler in \cite{Host_Kra05,Ziegler07}, and many parallels exist between the two frameworks. It is therefore natural to ask whether our main result, \cref{mainthm_ps_structure_theory_pol}, which characterizes the structure of return-time sets in minimal topological dynamical systems, admits a measure-theoretic analogue describing in a similar fashion the structure of return-time sets in ergodic measure-preserving systems.
In this section, we explore this topic in more detail, formulating a concrete question that can be viewed as an ergodic counterpart to \cref{mainthm_ps_structure_theory_pol}, and presenting a partial result in support of it.

Up to this point in the paper, we have worked with topological dynamical systems $(X, T)$ and have considered their topological $\infty$-step pronilfactors, denoted by $(X_\infty, T)$.
Since we now shift our focus to the ergodic-theoretic setting, we consider instead measure-preserving systems $(X,\mu,T)$, where $X$ is a compact metric space, $\mu$ is a Borel probability measure on $X$, and $T\colon X\to X$ is a measurable transformation that preserves the measure $\mu$, together with their measure-theoretic $\infty$-step pronilfactors $(Z_\infty,\mu,T)$. For a definition of the measure-theoretic $\infty$-step pronilfactors, we refer to~\cite{Host_Kra18}.
We remark that when $(X,T)$ is a minimal topological system, its topological $\infty$-step pronilfactor $(X_\infty,T)$ is a factor of its measure-theoretic $\infty$-step pronilfactor $(Z_\infty,\mu,T)$ for any $T$-invariant and ergodic probability measure $\mu$ on $(X,T)$, but in general the latter can be larger.

\begin{theorem}
\label{thm_imperfect_ergodic_analogue_of_thm_a}
Let $(X, \mu, T)$ be an ergodic measure preserving system and $(Z_{\infty}, \mu, T)$ be its measure-theoretic $\infty$-step pronilfactor. Let $p = (p_1, \ldots, p_d) \in \Z[x]^d$ be essentially distinct and $f_1, \ldots, f_d: X \to [0,\infty)$ bounded measurable functions. For all $\eps_1<\eps_2\in (0,\infty)$, the sets
\[
    R_{X,\eps_1} = \bigg\{n \in \Z: \int_X T^{p_1(n)} f_1 \cdots T^{p_d(n)} f_d \ d \mu > \eps_1\bigg\}
\]
and
\[
    R_{Z_{\infty}, \eps_2} = \bigg\{n \in \Z: \int_X T^{p_1(n)} \E(f_1|Z_{\infty}) \cdots T^{p_d(n)} \E(f_d|Z_{\infty}) \ d \mu > \eps_2\bigg\}
\]
satisfy 
\begin{align}
\label{eqn_diff_of_msble_returns}
    d^*(R_{Z_{\infty}, \eps_2} \setminus R_{X,\eps_1}) = 0.
\end{align}
\end{theorem}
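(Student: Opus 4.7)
The plan is to prove the stronger density-zero statement $d^*(\{n\in\Z : |a_n - b_n| > \delta\}) = 0$ for every $\delta > 0$, where for brevity $a_n := \int T^{p_1(n)}f_1\cdots T^{p_d(n)}f_d\,d\mu$ and $b_n := \int T^{p_1(n)}\E(f_1|Z_\infty)\cdots T^{p_d(n)}\E(f_d|Z_\infty)\,d\mu$. Specializing to $\delta = \epsilon_2 - \epsilon_1 > 0$ and using the inclusion $R_{Z_\infty,\epsilon_2}\setminus R_{X,\epsilon_1} \subseteq \{n : b_n - a_n > \epsilon_2 - \epsilon_1\}$ will then yield the theorem. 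To establish this density-zero statement, the goal is to prove the quantitative upgrade
\[
\sup_{M\in\Z}\frac{1}{N}\sum_{n=M}^{M+N-1}(a_n-b_n)^2 \ \longrightarrow\ 0 \quad \text{as } N\to\infty,
\]
whence Chebyshev's inequality yields the conclusion.

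The main step is to lift the analysis to the product system $(X\times X,\mu\otimes\mu,T\times T)$. Setting $g_i := \E(f_i|Z_\infty)$ and $h_i := f_i - g_i$, telescope
\[
T^{p_1(n)}f_1\cdots T^{p_d(n)}f_d \ - \ T^{p_1(n)}g_1\cdots T^{p_d(n)}g_d \ = \ \sum_{j=1}^d D_n^{(j)},
\]
where $D_n^{(j)}$ is the polynomial product featuring $T^{p_j(n)}h_j$ in slot $j$, with $g_i$ in slots $i<j$ and $f_i$ in slots $i>j$. Letting $c_n^{(j)} := \int D_n^{(j)}\,d\mu$, the Cauchy--Schwarz inequality $(a_n - b_n)^2 \leq d\sum_{j=1}^d (c_n^{(j)})^2$ reduces matters to bounding $\sup_M \frac{1}{N}\sum_{n=M}^{M+N-1} (c_n^{(j)})^2$ for each $j$. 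Applying Fubini,
\[
(c_n^{(j)})^2 = \int_{X\times X}\prod_{i=1}^d (T\times T)^{p_i(n)}\bigl(\alpha_i^{(j)}\otimes\alpha_i^{(j)}\bigr)\,d(\mu\otimes\mu),
\]
where $\alpha_i^{(j)}\in\{g_i, h_j, f_i\}$ according to whether $i<j$, $i=j$, or $i>j$. The distinguished tensor factor $h_j\otimes h_j$ satisfies $\|h_j\otimes h_j\|_{U^k(X\times X)} = \|h_j\|_{U^k(X)}^2 = 0$ for every $k$, using the multiplicativity of the Gowers--Host--Kra seminorms on product systems together with the Host--Kra characterization $\|h_j\|_{U^k(X)} = 0$ (which follows from $\E(h_j|Z_\infty) = 0$). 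The polynomial Host--Kra--Leibman characteristic-factor theorem, applied in the product system to the essentially distinct tuple $p$, then forces the $L^2(X\times X)$-norm of the above Cesàro average to vanish as $N\to\infty$. Integrating against the constant $1$ delivers the required control on $\frac{1}{N}\sum_n (c_n^{(j)})^2$.

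The main technical hurdle is securing uniformity in $M$. After the index shift $n\mapsto n+M$, the average becomes $\frac{1}{N}\sum_{n=0}^{N-1}\prod_i (T\times T)^{p_i(n+M)}(\alpha_i^{(j)}\otimes\alpha_i^{(j)})$; since the shifted polynomials $p_i(\cdot+M)$ remain essentially distinct and retain the leading coefficients of $p_i$, a quantitative form of Host--Kra--Leibman (bounding the $L^2$-norm of the average by a Gowers--Host--Kra seminorm depending only on the degree data of $p$) delivers a convergence rate independent of $M$. The positivity hypothesis on the $f_i$ plays no role in the argument and is imposed only so that $a_n,b_n\ge 0$ make the thresholds $\epsilon_1,\epsilon_2$ meaningful.
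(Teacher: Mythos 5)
Your proposal follows essentially the same route as the paper: both reduce the theorem to showing that $|\alpha(n)-\beta(n)|$ tends to zero in uniform density and then use the inclusion $R_{Z_\infty,\eps_2}\setminus R_{X,\eps_1}\subseteq\{n: \beta(n)-\alpha(n)>\eps_2-\eps_1\}$; the paper simply cites Leibman and Bergelson--Host--Kra for the uniform-density convergence, whereas you carry out the telescoping/product-system argument that underlies it. One intermediate claim is false as stated: the Gowers--Host--Kra seminorms are \emph{not} multiplicative over Cartesian products ($(\mu\otimes\mu)^{[k]}\neq\mu^{[k]}\otimes\mu^{[k]}$ in general, because the invariant $\sigma$-algebra of $(X\times X,T\times T)$ can strictly contain the product of the invariant $\sigma$-algebras), so the identity $\nnorm{h_j\otimes h_j}{k}=\nnorm{h_j}{k}^2$ does not hold. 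What is true, and suffices for you, is the Bergelson--Host--Kra-type inequality $\nnorm{h\otimes \bar h}{k}$ (computed in $X\times X$) $\le \nnorm{h}{k+1}^2$ (computed in $X$); since $\E(h_j\mid Z_\infty)=0$ forces $\nnorm{h_j}{k}=0$ for every $k$, you still get vanishing of all seminorms of $h_j\otimes h_j$ on the product, and the rest of your argument goes through.
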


\begin{proof}
Define the sequences
\[
    \alpha(n) = \int_X T^{p_1(n)} f_1 \cdots T^{p_d(n)} f_d \ d \mu
\]
and
\[
    \beta(n) = \int_X T^{p_1(n)} \E(f_1|Z_{\infty}) \cdots T^{p_d(n)} \E(f_d|Z_{\infty}) \ d \mu.
\]
By combining ideas from \cite{Leibman05} and \cite{Bergelson_Host_Kra05}, one can show that
\[
    \lim_{N - M \to \infty} \frac{1}{N - M} \sum_{n=M}^N |\alpha(n) - \beta(n)| = 0.
\]
Now we have
\[
    R_{Z_{\infty}, \eps_2} \setminus R_{X,\eps_1} \subseteq \{n \in \Z: |\alpha(n) - \beta(n)| > \eps_2-\eps_1\}
\]
and so has Banach density zero.
\end{proof}

Does the conclusion in \eqref{eqn_diff_of_msble_returns} of \cref{thm_imperfect_ergodic_analogue_of_thm_a} hold when $\eps_1 =\eps_2 = 0$?  If true, this would yield a tighter ergodic-theoretic analogue to \cref{mainthm_ps_structure_theory_pol}.  The argument given above fails when $\eps_1 =\eps_2 = 0$, but we could not rule out the possibility that the statement is true.

\begin{question}
\label{quest_ergodic_analogue}
    Let $(X, \mu, T)$ be an ergodic measure preserving system and $(Z_{\infty}, \mu, T)$ be its $\infty$-step pronilfactor. Let $p = (p_1, \ldots, p_d) \in \Z[x]^d$ be essentially distinct. Let $f_1, \ldots, f_d: X \to [0,\infty)$ be bounded measurable functions. Do the sets
\[
    R_{X,0} = \bigg\{n \in \Z: \int_X T^{p_1(n)} f_1 \cdots T^{p_d(n)} f_d \ d \mu > 0\bigg\}
\]
and
\[
    R_{Z_{\infty}, 0} = \bigg\{n \in \Z: \int_X T^{p_1(n)} \E(f_1|Z_{\infty}) \cdots T^{p_d(n)} \E(f_d|Z_{\infty}) \ d \mu > 0\bigg\}
\]
satisfy 
\[
    d^*(R_{Z_{\infty}, 0} \setminus R_{X,0}) = 0?
\]
\end{question}

A first step toward answering \cref{quest_ergodic_analogue} would be to do so in the case that $d=2$, $p_1(n) = 0$, and $p_2(n) = n$.

\bibliographystyle{abbrv}
\bibliography{bib}
\end{document}